\newcommand{\R}{\mathbb{R}}
\newcommand{\T}{\mathbb{T}}
\newcommand{\C}{\mathbb{C}}
\newcommand{\A}{\mathcal{A}}
\newcommand{\B}{\mathcal{B}}
\newcommand{\norm}[1]{\Vert#1\Vert}
\newcommand{\bignorm}[1]{\bigl\Vert#1\bigr\Vert}
\newcommand{\Bignorm}[1]{\Bigl\Vert#1\Bigr\Vert}
\newcommand{\norme}[1]{\left\Vert #1\right\Vert}
\newcommand{\normeinf}[1]{\norme{#1}_{\infty}}
\newcommand{\MH}{\mathcal{M}(H^1(\R))}
\theoremstyle{plain}
\newtheorem{thm}{Theorem}[section]
\newtheorem{prop}[thm]{Proposition}
\newtheorem{lem}[thm]{Lemma}
\newtheorem{cor}[thm]{Corollary}
\theoremstyle{definition}
\newtheorem{df}[thm]{Definition}
\theoremstyle{remark}
\newtheorem{rq1}[thm]{Remark}
\numberwithin{equation}{section}
\begin{document}

\title[Fourier Multipliers on $H^1(\R)$ and functional calculus]
{$S^1$-bounded Fourier multipliers on $H^1(\R)$ and functional calculus for semigroups}

\author[L. Arnold]{Loris Arnold}
\email{larnold@impan.pl}
\address{Institute of Mathematics, Polish Academy of Sciences, Śniadeckich 8, 00-956 Warszawa, Poland}
\author[C. Le Merdy]{Christian Le Merdy}
\email{clemerdy@univ-fcomte.fr}
\address{Laboratoire de Math\'ematiques de Besan\c con, UMR 6623,
CNRS, Universit\'e Bourgogne Franche-Comt\'e,
25030 Besan\c{c}on Cedex, France}
\author[S. Zadeh]{Safoura Zadeh}
\email{jsafoora@gmail.com}
\address{School of Mathematics, University of Bristol,
Bristol BS8 1UG, United Kingdom}
\address{Institut des Hautes \'Etudes Scientifiques,
Bures-sur-Yvette 91440, France}

\date{\today}

\maketitle

\begin{abstract} 
Let $T\colon H^1(\R)\to H^1(\R)$ be a bounded Fourier multiplier on the
analytic Hardy space $H^1(\R)\subset L^1(\R)$ and let $m\in L^\infty(\R_+)$
be its symbol, that is, $\widehat{T(h)}=m\widehat{h}$ for all $h\in H^1(\R)$.
Let $S^1$ be the Banach space of all trace class operators on $\ell^2$. We show that
$T$ admits a bounded tensor extension 
$T\overline{\otimes}  I_{S_1}\colon H^1(\R;S^1)
\to  H^1(\R;S^1)$ if and only if
there exist a Hilbert space $\mathcal H$ and two functions
$\alpha, \beta \in L^\infty(\R_+;{\mathcal H})$
such that 
$m(s+t) = \langle\alpha(t),\beta(s)\rangle_{\mathcal H}$
for almost every $(s,t)\in\R_+^2$. Such Fourier multipliers are
called $S^1$-bounded and we let ${\mathcal M}_{S^1}(H^1(\R))$
denote the Banach space of all $S^1$-bounded Fourier multipliers.
Next we apply this result to functional calculus estimates, in two steps. 
First we introduce a new Banach 
algebra $\A_{0,S^1}(\C_+)$ of bounded analytic functions on
$\C_+ =\bigl\{z\in\C\, :\, {\rm Re}(z)>0\bigr\}$ and show that its dual 
space coincides with  ${\mathcal M}_{S^1}(H^1(\R))$. Second, given any 
bounded $C_0$-semigroup $(T_t)_{t\geq 0}$ on  Hilbert space, and 
any $b\in L^1(\R_+)$, we establish an 
estimate $\bignorm{\int_0^\infty b(t) T_t\, dt}\lesssim
\norm{L_b}_{\A_{0,S^1}}$, where $L_b$ denotes  
the Laplace transform of $b$. This improves 
previous functional calculus estimates recently obtained by 
the first two authors.
\end{abstract}

\vskip 0.8cm
\noindent
{\it 2000 Mathematics Subject Classification:} 42A45, 30H05 47D06, 47A60, 46L07

\smallskip
\noindent
{\it Key words:} Multipliers, Hardy spaces, Factorization,
Functional calculus, Semigroups.

\vskip 1cm

\section{Introduction}\label{Intro} 

\subsection{Presentation}
Let $H^1(\R)\subset L^1(\R)$ be the Hardy space consisting of all 
functions $h\in L^1(\R)$ whose Fourier transfom
$\widehat{h}$ vanishes on $\R_-$. A bounded Fourier multiplier
on $H^1(\R)$ is a bounded operator $T\colon H^1(\R)\to H^1(\R)$
such that there exists $m\in L^\infty(\R_+)$, called the symbol
of $T$, satisfying 
$\widehat{T(h)}=m\widehat{h}$ for all $h\in H^1(\R)$. Let 
$S^1$ be the Banach space of all trace class operators on $\ell^2$
and consider the $S^1$-valued Hardy space $H^1(\R;S^1)$.
We say that $T\colon H^1(\R)\to H^1(\R)$ is $S^1$-bounded
if $T\otimes I_{S^1}$ extends to a bounded map
$$
T\overline{\otimes}  I_{S_1}\colon H^1(\R;S^1)\longrightarrow H^1(\R;S^1).
$$
This is equivalent to $T$
being completely bounded in the framework of operator space theory
(see Remark \ref{CBH1} for details on this). There is no known
characterization of bounded Fourier multipliers
on $H^1(\R)$. The first goal of this paper is to prove that however, there is
a characterization of $S^1$-bounded (equivalently, completely bounded)
ones, as follows: a function $m\in L^\infty(\R_+)$ is the symbol
of an $S^1$-bounded Fourier multiplier
$T\colon H^1(\R)\to H^1(\R)$, with $\norm{T\overline{\otimes}  I_{S_1}}\leq C$,
if and only if there exist
a Hilbert space $\mathcal{H}$ and two 
functions $\alpha,\beta\in L^\infty(\R_+;\mathcal{H})$
such that $\norm{\alpha}_\infty\norm{\beta}_\infty\leq C$ and
$$
m(s+t)=\langle\alpha(t),\beta(s)\rangle_{\mathcal H}, \qquad\hbox{for\ 
a.e.-}(s,t)\in\R_+^2.
$$
This result, established in Theorem \ref{Main} below, has significant relationships 
with other Hilbert space factorization results.
First, this is a real line analogue of Pisier's theorem \cite[Theorem 6.2]{Pis1}
characterizing $S^1$-bounded Fourier multipliers on the discrete Hardy space
$H^1(\T)$ over the unit circle $\T$. Second, 
Theorem \ref{Main} has a strong analogy with
a famous result of Bozejko-Fendler \cite{BF} and Jolissaint \cite{J}
which asserts that for any locally compact group $G$,
a function $m\colon G\to\C$ induces a completely bounded 
Fourier multiplier on the group von Neumann algebra
of $G$, with completely bounded norm $\leq C$, if and only if there exist
a Hilbert space $\mathcal{H}$ and two functions $\alpha,\beta\in L^\infty(G;\mathcal{H})$
such that $\norm{\alpha}_\infty\norm{\beta}_\infty\leq C$ and
$m(st)=\langle\alpha(t),\beta(s)\rangle_{\mathcal H}$
for almost every $(s,t)\in G\times G$. Third, combining
\cite[Subsection 3.2]{Spronk} and Theorem \ref{Main}, we see
that $m\in L^\infty(\R_+)$ is the symbol
of an $S^1$-bounded Fourier multiplier on $H^1(\R)$ 
if and only if the function $(s,t)\mapsto m(s+t)$ 
induces a bounded measurable Schur multiplier on 
$B(L^2(\R_+))$. We refer to \cite{Coine, Daws} for more 
results relating multipliers and factorization theory.

The second goal of this paper is to prove new estimates 
on the functional calculus of generators of bounded 
$C_0$-semigroups on Hilbert space. The results we
obtain exploit Theorem \ref{Main} discussed above.
Consider the open half-plane
\begin{equation}\label{C+}
\C_+ =\bigl\{z\in\C\, :\, {\rm Re}(z)>0\bigr\}.
\end{equation}
Let $H$ be a Hilbert space and let $-A$ be the  
infinitesimal generator of a bounded $C_0$-semigroup $(T_t)_{t\geq 0}$
on $H$. 
To any $b\in L^1(\R_+)$, one may associate the operator 
$\Gamma(A,b)\in B(H)$ defined by 
$$
[\Gamma(A,b)](x) = \int_0^\infty b(t) T_t(x)\, dt,\qquad x\in H.
$$
Formally this operator can be regarded as $L_b(A)$, where
$$
L_b\colon \C_+\to \C,\quad
L_b(z)=\int_0^\infty b(t)e^{-tz}\, dt,
$$
is the Laplace transform of $b$.
The functional calculus issue is to estimate the norm
of $\Gamma(A,b)$ in terms of a ``sharp and useful" norm
of $L_b$. The theory of $H^\infty$-functional calculus 
provides specific, interesting results in the case when $(T_t)_{t\geq 0}$
is a bounded analytic semigroup \cite{MI, LM0}. 
In the general setting discussed here, Haase \cite[Corollary 5.5]{Haase2}
proved a remarkable
estimate $\norm{\Gamma(A,b)}\lesssim\norm{L_b}_{\B_0}$ in terms 
of a suitable Besov algebra $\B_0(\C_+)$. We refer to \cite{BGT1, BGT2}
for more on this estimate, a concrete description
of the Besov functional calculus, and applications.
In the recent paper \cite{AL}, the first two authors
introduced a new Banach algebra $\A_0(\C_+)$ of
analytic functions on $\C_+$ and proved that
$\norm{\Gamma(A,b)}\leq\bigl(\sup_{t\geq 0}\norm{T_t}\bigr)^2\,
\norm{L_b}_{\A_0(\C_+)}$ for all $b\in L^1(\R_+)$.
This improved Haase's estimate quoted above.

In the present paper, we define a Banach algebra $\A_{0,S^1}(\C_+)$
such that $\A_{0}(\C_+)\subset \A_{0,S^1}(\C_+)$ contractively and
\begin{equation}\label{NewEst}
\norm{\Gamma(A,b)}\leq\Bigl(\sup_{t\geq 0}\norm{T_t}\Bigr)^2\,
\norm{L_b}_{\A_{0,S^1}(\C_+)},\qquad b\in L^1(\R_+).
\end{equation}
Moreover $\A_{0}(\C_+)\not=\A_{0,S^1}(\C_+)$, so that 
(\ref{NewEst}) is a genuine improvement of \cite[Theorem 4.4]{AL}.

We now briefly describe the contents of the next sections. 
Section 2 contains preliminary results on vector valued Hardy spaces
and Hilbert space factorizations. Section 3 is devoted to the
characterization of $S^1$-bounded Fourier multipliers described in 
the first paragraph of this subsection. In Section 4
we introduce and study the Banach algebra $\A_{0,S^1}$ 
and devise an isometric 
identification
$$
\A_{0,S^1}^*\,\simeq\, {\mathcal M}_{S^1}(H^1(\R)),
$$
where ${\mathcal M}_{S^1}(H^1(\R))$ denotes the space of
$S^1$-bounded Fourier multipliers on $H^1(\R)$ equipped with 
its natural norm. Functional calculus results, including (\ref{NewEst}),  are established in 
Section 5.

\subsection{Notations, conventions and definitions.} 
All Banach spaces considered in this paper are complex ones.

Recall the definition of $\C_+$ from (\ref{C+}). We will also use the half-plane
$$
P_+ =\bigl\{z\in\C\, :\, {\rm Im}(z)>0\bigr\}.
$$
For any Banach space $Z$, we let ${\rm Hol}(P_+;Z)$ denote the vector space
of all holomorhic functions from $P_+$ into $Z$.

Let $X$ be a Banach space.
The duality action of $x^*\in X^*$ on $x\in X$ is denoted by
$\langle x^*,x \rangle_{X^*,X}$ or simply by
$\langle x^*,x \rangle$ if there is no risk of confusion. 

For any Hilbert space $\mathcal H$, we let 
$\langle\,\cdotp,\,\cdotp\rangle_{\mathcal H}$ denote
the scalar product on $\mathcal H$ that we assume, by convention,
linear in the first variable and anti-linear in the second
variable.

Let $(\Omega,\mu)$ be a $\sigma$-finite measure space, let $Z$ be a Banach space and let 
$1\leq p\leq\infty$. We let $L^p(\Omega;Z)$ denote the Bochner space 
of all measurable functions $h\colon \Omega\to Z$
(defined up to almost everywhere zero functions)  such that the
norm function $t\mapsto \norm{h(t)}_Z$ belongs to $L^p(\Omega)$ 
(see e.g. \cite[Chapters I and II]{DU}). 
The norm of an element $h\in L^p(\Omega;Z)$, which is equal
to the $L^p(\Omega)$-norm of  $t\mapsto \norm{h(t)}_Z$, will simply be denoted  
by $\norm{h}_{p}$ if the underlying space $Z$ is clear.
We will use the fact that when $p\not=\infty$, $L^p(\Omega)\otimes Z$
is dense in $L^p(\Omega;Z)$.

For any Hilbert space $\mathcal H$, we let $S^\infty(\mathcal H)$
denote the Banach space of all compact operators $\mathcal H\to \mathcal H$. Likewise,
we let $S^1(\mathcal H)$ denote the Banach space of all trace class operators $\mathcal H\to \mathcal H$.
We let $tr$ denote the usual trace on $S^1(\mathcal H)$ and we recall that 
the duality pairing 
$$
\langle b,a\rangle_{S^1,S^\infty} = tr(ab),\qquad
a\in S^\infty(\mathcal H), \ b\in S^1(\mathcal H),
$$
induces an isometric isomorphism $S^\infty(\mathcal H)^*\simeq S^1(\mathcal H)$, see e.g. \cite[Chapter 3]{Con}.
We simply let $S^\infty=S^\infty(\ell^2)$ and  $S^1=S^1(\ell^2)$. Accordingly,
we let $S^2$ denote the Hilbert space of Hilbert-Schmidt operators on $\ell^2$. Also for any integer $n\geq 1$,
we let $M_n$ denote the space of $n\times n$ matrices equipped with its usual  operator norm
(i.e. $M_n=S^\infty(\ell^2_n)$) and we set $S^1_n=S^1(\ell^2_n)$.

The Fourier transform of any $h\in L^1(\R)$ is defined
by
\[
\widehat{h}(u) = \int_{-\infty}^\infty h(t)e^{-itu}\,dt,
\qquad u\in\R.
\]
We keep the same notation $\widehat{h}$ to denote the Fourier transform of any
$h\in L^1(\R;Z)+L^\infty(\R;Z)$ whenever $Z$ is a Banach space. We recall that for any 
Hilbert space $\mathcal H$, the Fourier transform on $L^2(\R;\mathcal H)$ satisfies
\begin{equation}\label{Plancherel}
\norm{\widehat{h}}_{L^2(\R;\mathcal H)}\,=\,\sqrt{2\pi}\,
\norm{h}_{L^2(\R;\mathcal H)},\qquad h\in L^2(\R;\mathcal H).
\end{equation}
Sometimes it will be convenient to write $\mathcal{F}(h)$ instead of 
$\widehat{h}$. Wherever it makes sense, we will
use $\mathcal{F}^{-1}$ to denote the inverse Fourier transform.

We let $C_b(\R_+^*)$ denote the Banach space of all continuous
bounded functions from $\R_+^*$ into $\C$, equipped
with the supremum norm. We will regard it as a subspace of 
$L^\infty(\R_+)$, the embedding
$$
C_b(\R_+^*)\subset L^\infty(\R_+)
$$
being an isometry.

For any Banach spaces $Y,Z$, we let $B(Y,Z)$ denote the Banach space of
all bounded operators from $Y$ into $Z$. We write $B(Y)$ instead of $B(Y,Y)$
if $Z=Y$ and we let $I_Y \in B(Y)$ denote the identity operator on $Y$.

The following classical notion will play a key role in this paper.

\begin{df}\label{Quotient}
{\it Let $Y,Z$ be two Banach spaces. 
We say that a bounded operator $T\colon Y\to Z$ is a quotient map 
if $\norm{T}\leq 1$ and for any $z\in Z$ with $\norm{z}<1$, there
exist $y\in Y$ such that $T(y)=z$ and $\norm{y}<1$. }
\end{df}

Let  $T\colon Y\to Z$ and let 
$\widetilde{T}\colon \frac{Y}{{\rm Ker}(T)}\to Z$ be the operator
induced by $T$ after factorization through its kernel. 
It is clear from the definition that
$T$ is a quotient map if and only if 
$\widetilde{T}$ is an isometry. It is well-known 
that $T$ is a quotient map if and only if
$T^*\colon Z^*\to Y^*$ is an isometry. In this case,
${\rm Ran}(T^*)$ is $w^*$-closed and equal to ${\rm Ker}(T)^\perp$.

\section{Preliminary results}

\subsection{The dual space $\Gamma_2(L^1,L^\infty)$}\label{Gamma2} In this
subsection, $X,Y,Z$ will denote arbitrary Banach spaces. We let
$\norm{\,\cdotp}_\wedge$ denote the projective norm on
$X\otimes Y$ and we let $X\widehat{\otimes} Y$ denote 
the completion of $(X\otimes Y,\norm{\,\cdotp}_\wedge)$,
called the projective tensor product of 
$X$ and $Y$. For any bounded operator $S\colon Y\to X^*$, there 
exists a (necessarily unique) 
$\xi_S\in (X\widehat{\otimes} Y)^*$ such that 
$$
\xi_S(x\otimes y)= \langle S(y),x\rangle_{X^*,X},\qquad x\in X,\ y\in Y.
$$
Then $\norm{\xi_S}=\norm{S}$, and hence the mapping $S\mapsto \xi_S$ gives
rise to an isometric identification
\begin{equation}\label{Proj}
(X\widehat{\otimes} Y)^*\,\simeq\, B(Y,X^*).
\end{equation}
We refer to either \cite[Section VIII.2, Theorem 1 $\&$ Corollary 2]{DU} or
\cite[Theorem 2.9]{Ryan} for these classical facts.

We say that a bounded operator
$S\colon Y\to Z$ factors through Hilbert space 
if there exist a Hilbert space $\mathcal H$
and two bounded operators $A\colon Y\to \mathcal H$
and $B\colon \mathcal H\to Z$ such that $BA=S$. 
We denote by $\Gamma_2(Y,Z)$ the space of all such operators.
For any $S\in \Gamma_2(Y,Z)$, we set 
$$
\gamma_2(S)=\inf\bigl\{\norm{A}\norm{B}\bigr\},
$$
where the infimum runs over all factorizations
of $S$ as above. It turns out that 
$\gamma_2$ is a norm on $\Gamma_2(Y,Z)$ and that the latter
is a Banach space. We refer to \cite[Chapter 3]{Pis1} for
details and various characterizations of operators
which factor through Hilbert space.

Assume that $Z=X^*$ is a dual space. 
According to \cite[Theorem 5.3]{Pis1},
there exists a
norm $\gamma_2^* \leq\norm{\,\cdotp}_\wedge$  on
$X\otimes Y$ such that 
if we let $X\otimes_{\gamma_2^*} Y$ 
denote the completion of $(X\otimes Y,\gamma_2^*)$,
then (\ref{Proj}) induces an isometric identification
\begin{equation}\label{gamma2*}
(X\otimes_{\gamma_2^*} Y)^*\,\simeq\, \Gamma_2(Y,X^*).
\end{equation}
We will not use the definition of $\gamma_2^*$ (we refer
the interested reader to \cite[Chapter 5]{Pis1} for information).

We now focus on the case when $X,Y$ are $L^1$-spaces.
Let $(\Omega_1,\mu_1)$ and $(\Omega_2,\mu_2)$ be
two $\sigma$-finite measure spaces. 
For any
$\varphi\in L^\infty(\Omega_1\times\Omega_2)$, one
may define an operator 
\begin{equation}\label{Kernel1}
S_\varphi\colon L^1(\Omega_2)\longrightarrow
L^\infty(\Omega_1)
\end{equation}
by 
\begin{equation}\label{Kernel2}
S_\varphi(f) = \,\int_{\Omega_2} \varphi(\,\cdotp,t)f(t)\, d\mu_2(t),\qquad 
f\in L^1(\Omega_2).
\end{equation}
The function $\varphi$ is called the kernel of $S_\varphi$.
Applying (\ref{Proj}) with $X=L^1(\Omega_1)$ and 
$Y=L^1(\Omega_2)$, together with the standard identity
$L^1(\Omega_1)\widehat{\otimes}L^1(\Omega_2)\simeq 
L^1(\Omega_1\times\Omega_2)$, we obtain 
an isometric $w^*$-homeomorphic identification
\begin{equation}\label{L1Linf}
L^\infty(\Omega_1\times\Omega_2)\,\simeq\, B(L^1(\Omega_2),
L^\infty(\Omega_1)).
\end{equation}
It is easy to check that this identification 
is given by the mapping $\varphi\to S_\varphi$. In particular,
any element of $B(L^1(\Omega_2),
L^\infty(\Omega_1))$ is of the form $S_\varphi$ for some
(unique) kernel $\varphi$.

We  aim at describing $\Gamma_2(L^1(\Omega_2),
L^\infty(\Omega_1))$ in terms of kernels.
We let $\mathcal{V}_2(\Omega_1\times\Omega_2)$ be the set of 
all $\varphi\in  L^\infty(\Omega_1\times\Omega_2)$ that essentially factor through
Hilbert space, in the sense that
there exist a Hilbert space $\mathcal H$ and two functions
$\alpha\in L^\infty(\Omega_2;{\mathcal H})$
and $\beta\in L^\infty(\Omega_1;{\mathcal H})$ such that 
$$
\varphi(s,t) = \langle\alpha(t),\beta(s)\rangle_{\mathcal H}\qquad\hbox{for\ 
a.e.-}(s,t)\in\Omega_1\times\Omega_2.
$$
For any such $\varphi$, we set
$$
\nu_2(\varphi)\,=\,\inf\bigl\{\norm{\alpha}_{L^\infty(\Omega_2;
{\mathcal H})}\norm{\beta}_{L^\infty(\Omega_1;{\mathcal H})}\bigr\},
$$
where the infimum runs over all factorizations
of $\varphi$ as above. 
The next lemma shows that $\nu_2$ is a norm  on $\mathcal{V}_2
(\Omega_1\times\Omega_2)$ and that the latter
is a (dual) Banach space. 

\begin{lem}\label{V2}\  

\begin{itemize}
\item [(1)] Let  $\varphi\in  L^\infty(\Omega_1\times\Omega_2)$. Then $S_\varphi$
belongs to $\Gamma_2(L^1(\Omega_2),
L^\infty(\Omega_1))$ if and only if $\varphi$ belongs to $\mathcal{V}_2(\Omega_1\times\Omega_2)$.
We further  have $\nu_2(\varphi) = \gamma_2(S_\varphi)$ in this case.
\item [(2)] The unit ball
$$
\bigl\{\varphi\in \mathcal{V}_2(\Omega_1\times\Omega_2)\, :\, \nu_2(\varphi)\leq 1\bigr\}
$$
is $w^*$-closed in $L^\infty(\Omega_1\times\Omega_2)$.
\end{itemize}
\end{lem}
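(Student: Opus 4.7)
The strategy is to translate between Hilbert-space factorizations $S_\varphi = BA$ of the integral operator and pointwise factorizations of its kernel $\varphi$, exploiting the representation formula \eqref{Kernel2} and the Radon-Nikodym property (RNP) of Hilbert space.

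For the ``if'' direction of (1), given $\varphi(s,t) = \langle\alpha(t),\beta(s)\rangle_{\mathcal H}$, I would set
$A\colon L^1(\Omega_2)\to\mathcal H$, $A(f) = \int_{\Omega_2} f(t)\alpha(t)\,d\mu_2(t)$ (a Bochner integral), and $B\colon\mathcal H\to L^\infty(\Omega_1)$, $[B(h)](s) = \langle h,\beta(s)\rangle_{\mathcal H}$. The bounds $\norm{A}\leq\norm{\alpha}_\infty$ and $\norm{B}\leq\norm{\beta}_\infty$ are immediate, and Fubini gives $BA=S_\varphi$. Taking the infimum over such factorizations yields $\gamma_2(S_\varphi)\leq\nu_2(\varphi)$.

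For the ``only if'' direction, suppose $S_\varphi=BA$ with $A\colon L^1(\Omega_2)\to\mathcal H$ and $B\colon\mathcal H\to L^\infty(\Omega_1)$. Since $\mathcal H$ has the RNP, the canonical map $L^\infty(\Omega_2;\mathcal H)\to B(L^1(\Omega_2),\mathcal H)$ sending $\alpha$ to $f\mapsto\int f\alpha\,d\mu_2$ is an isometric isomorphism (see \cite[Chapter IV]{DU}); thus $A$ is represented by some $\alpha\in L^\infty(\Omega_2;\mathcal H)$ with $\norm{\alpha}_\infty=\norm{A}$. To handle $B$, I pass to $B^*\colon L^\infty(\Omega_1)^*\to\mathcal H^*\simeq\mathcal H$, restrict along the natural embedding $L^1(\Omega_1)\hookrightarrow L^\infty(\Omega_1)^*$ to obtain $\widetilde B\colon L^1(\Omega_1)\to\mathcal H$ with $\bignorm{\widetilde B}\leq\norm{B}$, and apply RNP once more to produce $\beta\in L^\infty(\Omega_1;\mathcal H)$ representing $\widetilde B$ with $\norm{\beta}_\infty\leq\norm{B}$. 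Testing $\langle S_\varphi(f),g\rangle$ on elementary tensors $f\otimes g\in L^1(\Omega_2)\otimes L^1(\Omega_1)$ and applying Fubini on both sides yields $\varphi(s,t)=\langle\alpha(t),\beta(s)\rangle_{\mathcal H}$ a.e., hence $\nu_2(\varphi)\leq\gamma_2(S_\varphi)$. (Since $\alpha,\beta$ are Bochner measurable, they are essentially separably valued, so no issue arises if $\mathcal H$ is non-separable.) This RNP-duality step is the main technical hinge of the lemma.

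For (2), I combine part (1) with \eqref{gamma2*} applied to $X=L^1(\Omega_1)$, $Y=L^1(\Omega_2)$: the isometric identification $\Gamma_2(L^1(\Omega_2),L^\infty(\Omega_1))\simeq\bigl(L^1(\Omega_1)\otimes_{\gamma_2^*}L^1(\Omega_2)\bigr)^*$ endows $\Gamma_2$ with a dual-space structure whose unit ball is $w^*$-compact. Let $(\varphi_\lambda)$ be a net with $\nu_2(\varphi_\lambda)\leq 1$ converging $w^*$ to $\varphi$ in $L^\infty(\Omega_1\times\Omega_2)$. By (1), the operators $S_{\varphi_\lambda}$ lie in the unit ball of $\Gamma_2(L^1(\Omega_2),L^\infty(\Omega_1))$, so Banach-Alaoglu provides a subnet with $S_{\varphi_{\lambda'}}\to T$ in the $\gamma_2$-$w^*$-topology, where $\gamma_2(T)\leq 1$. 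Writing $T=S_\psi$ via \eqref{L1Linf} and testing both convergences on an elementary tensor $f\otimes g\in L^1(\Omega_1)\otimes L^1(\Omega_2)$ (which belongs simultaneously to $L^1(\Omega_1)\widehat\otimes L^1(\Omega_2)$ and to $L^1(\Omega_1)\otimes_{\gamma_2^*}L^1(\Omega_2)$) gives
$$
\int\int\psi(s,t)f(t)g(s)\,d\mu_2(t)\,d\mu_1(s)\,=\,\int\int\varphi(s,t)f(t)g(s)\,d\mu_2(t)\,d\mu_1(s),
$$
so $\psi=\varphi$ a.e., and $\varphi\in\mathcal V_2$ with $\nu_2(\varphi)=\gamma_2(T)\leq 1$.
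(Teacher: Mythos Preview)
Your proof is correct and follows essentially the same approach as the paper: part (1) uses the RNP of Hilbert space to pass between operator factorizations $S_\varphi=BA$ and kernel factorizations $\varphi(s,t)=\langle\alpha(t),\beta(s)\rangle_{\mathcal H}$, exactly as the paper does (the paper is just a bit more explicit about the conjugation needed when identifying $\mathcal H^*$ with $\mathcal H$, defining $\overline{B_*}(f)$ via $\langle\zeta,\overline{B_*}(f)\rangle_{\mathcal H}=\langle B_*(\overline f),\zeta\rangle_{\mathcal H^*,\mathcal H}$). For part (2) the paper argues slightly more directly---it observes that the unit ball of $\mathcal V_2$ is the image of the unit ball of $\Gamma_2(L^1(\Omega_2),L^\infty(\Omega_1))$ under the $w^*$-continuous adjoint map $\kappa^*$, hence $w^*$-compact---but your net/subnet argument via Banach--Alaoglu is an equivalent way to reach the same conclusion.
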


\begin{proof}
To prove (1), let  $\varphi\in  L^\infty(\Omega_1\times\Omega_2)$
and assume that $S_\varphi$ factors through Hilbert space. There exist 
a Hilbert space $\mathcal H$
and two bounded operators $A\colon L^1(\Omega_2)\to \mathcal H$
and $B\colon \mathcal H\to  L^\infty(\Omega_1)$ such that $BA=S_\varphi$. 
Let $B_*\colon L^1(\Omega_1)\to{\mathcal H}^*$ be the restriction
of the adjoint of $B$ to $L^1(\Omega_1)$. Then let 
$\overline{B_*}\colon L^1(\Omega_1)\to{\mathcal H}$ be defined
as follows: for any $f\in L^1(\Omega_1)$, 
$$
\langle\zeta,\overline{B_*}(f)\rangle_{\mathcal H}\,=\,\langle B_*(\overline{f}),\zeta\rangle_{{\mathcal H}^*,{\mathcal H}},
\qquad \zeta\in\mathcal H.
$$
This is a linear map and $\norm{\overline{B_*}}=\norm{B}$.

Since any Hilbert space has the Radon-Nikodym property \cite[Corollary IV.1.4]{DU}, 
there exists 
a function $\alpha\in L^\infty(\Omega_2;{\mathcal H})$ such that 
$$
A(f_2) = \int_{\Omega_2} \alpha(t)f_2(t)\, d\mu_2(t),\qquad f_2\in L^1(\Omega_2),
$$
by \cite[Theorem III.1.5]{DU}. Moreover $\norm{\alpha}_{L^\infty(\Omega_2;{\mathcal H})}=\norm{A}$.
Likewise, there exists  
a function $\beta\in L^\infty(\Omega_1;{\mathcal H})$ such that 
$$
\overline{B_*}(f_1) = \int_{\Omega_1} \beta(s)f_1(s)\, d\mu_1(s),
\qquad f_1\in L^1(\Omega_1),
$$
and $\norm{\beta}_{L^\infty(\Omega_1;{\mathcal H})}=\norm{B}$.

Now  for any  $f_1\in L^1(\Omega_1)$ and $f_2\in L^1(\Omega_2)$, we have
\begin{align*}
\int_{\Omega_1\times\Omega_2} \varphi(s,t) f_1(s) f_2(t)\,d\mu_1(s)\, d\mu_2(t)\,
& =\, \langle S_\varphi(f_2),f_1\rangle_{L^\infty(\Omega_1), L^1(\Omega_1)}\\
& =\, \langle A(f_2),\overline{B_*}(\overline{f_1})\rangle_{{\mathcal H}}\\
& =\,\biggl\langle \int_{\Omega_2} \alpha(t)f_2(t)\, d\mu_2(t),
\int_{\Omega_1} \beta(s) \overline{f_1(s)}\, d\mu_1(s)\biggr\rangle_{\mathcal H}\\
& =\, \int_{\Omega_1\times\Omega_2} \langle \alpha(t),\beta(s)\rangle_{\mathcal H} f_2(t) f_1(s)\,d\mu_1(s)\, d\mu_2(t).
\end{align*}
This implies that $\varphi(s,t)= \langle \alpha(t),\beta(s)\rangle_{\mathcal H}$ for a.e.-$(s,t)$ in $\Omega_1\times\Omega_2$.
This shows that  $\varphi$ belongs to $\mathcal{V}_2(\Omega_1\times\Omega_2)$ and
that $\nu_2(\varphi)\leq \norm{\alpha}_\infty\norm{\beta}_\infty=\norm{A}\norm{B}$. Passing to the infimum
over all possible factorizations of $S_\varphi$, we obtain that $\nu_2(\varphi)\leq\gamma_2(S_\varphi)$.

Reversing the arguments, 
we obtain that if $\varphi\in\mathcal{V}_2(\Omega_1\times\Omega_2)$,
then $S_\varphi$ belongs to the space $\Gamma_2(L^1(\Omega_2),
L^\infty(\Omega_1))$, with  $\gamma_2(S_\varphi)\leq \nu_2(\varphi).$

To prove (2), note that using the 
standard identity
$L^1(\Omega_1)\widehat{\otimes}L^1(\Omega_2)\simeq 
L^1(\Omega_1\times\Omega_2)$, we have a natural contractive embedding
$$
\kappa\colon L^1(\Omega_1\times\Omega_2)\longrightarrow L^1(\Omega_1)\otimes_{\gamma_2^*} L^1(\Omega_2).
$$
Using (\ref{gamma2*}), we can see its adjoint map as $\kappa^*\colon
\Gamma_2(L^1(\Omega_2),
L^\infty(\Omega_1))\to L^\infty(\Omega_1\times\Omega_2)$. It follows from 
(1) that the unit ball of $\mathcal{V}_2(\Omega_1\times\Omega_2)$ is equal to the 
image of the unit ball of $\Gamma_2(L^1(\Omega_2),
L^\infty(\Omega_1))$ under $\kappa^*$. Thus it is a $w^*$-closed subset of 
 $L^\infty(\Omega_1\times\Omega_2)$.
\end{proof}

\subsection{Vector valued Hardy spaces and duality}\label{Hardy}

We recall that $H^1(\R)\subset L^1(\R)$ is the subspace of all $h\in L^1(\R)$ such that 
$\widehat{h}(u)=0$ for all $u\leq 0$. 
More generally for any $1\leq p\leq\infty$, the Hardy 
space $H^p(\R)$ is, by definition, the subspace
of all functions in $L^p(\R)$ whose Fourier transform has support in $\R_+$. Equivalently,
for any $1\leq p<\infty$,
$H^p(\R)$ is the closure of $H^1(\R)\cap L^p(\R)$ in $L^p(\R)$ whereas
$H^\infty(\R)$ is the $w^*$-closure of $H^1(\R)\cap L^\infty(\R)$ in $L^\infty(\R)$.
We refer e.g. to \cite[Chapter II]{Gar} or \cite[Chapter 8]{Hof} for basics on $H^p(\R)$.

We will need vector valued Hardy spaces on the real line and a few preliminary results 
that we establish in this section. Let $Z$ be a Banach space.
For any $1\leq p<\infty$, we
let $H^p(\R;Z)\subset L^p(\R;Z)$ denote the closure of 
$H^p(\R)\otimes Z$ in $L^p(\R;Z)$.

Given a harmonic function $\phi\colon P_+\to Z$
and  $v>0$,  let $\phi^v\colon\R\to Z$ be defined by $\phi^v(u)=\phi(u+iv)$, $u\in\R$.
Then we let ${\bold h}^1(P_+;Z)$ be the space of all such functions
such that $\phi^v\in L^1(\R;Z)$ for all $v>0$ and 
$(\phi^v)_{v>0}$ is bounded in $L^1(\R;Z)$.
It turns out that  ${\bold h}^1(P_+;Z)$ is a Banach space for the norm
$$
\norme{\phi}_{{\bold h}^1(P_+;Z)} = \sup_{v>0}\bignorm{\phi^v}_{L^1(\R;Z)}
=\sup_{v>0}\Bigl(\int_{-\infty}^{\infty}\norme{\phi(u+iv)}_Z\, du\Bigr).
$$
We let ${\bold H}^1(P_+;Z)={\bold h}^1(P_+;Z)\cap {\rm Hol}(P_+;Z)$ be the
subspace of ${\bold h}^1(P_+;Z)$ formed of all holomorphic functions, equiped with the
${\bold h}^1(P_+;Z)$-norm. Then ${\bold H}^1(P_+;Z)$ is closed (hence a 
Banach space) and by construction,
\begin{equation}\label{H1h1}
{\bold H}^1(P_+;Z)\subset {\bold h}^1(P_+;Z).
\end{equation}
We refer  the reader to \cite[Section 4.6]{Pis2} for details and more information 
on these spaces (as well as on 
$p$-analogs that we will not use in this paper).

Let $M(\R;Z)$ be the Banach space of all bounded $Z$-valued Borel measures on $\R$,
as defined in \cite[Section 2.1]{Pis2}.
For any $\mu\in M(\R;Z)$, let $\phi_\mu\colon P_+\to Z$ be its Poisson
integral defined by
$$
\phi_\mu(u+iv) \,=\,\frac{1}{\pi}\,
\int_{-\infty}^{\infty}\frac{v}{(u-t)^2+v^2}\, d\mu(t), \qquad (u,v)\in\R\times\R_+^*.
$$
Then $\phi_\mu$ belongs to ${\bold h}^1(P_+;Z)$ and 
$\norme{\phi_\mu}_{{\bold h}^1(P_+;Z)} =\norm{\mu}_{M(\R;Z)}$. Thus $\mu\mapsto \phi_\mu$
induces an isometric embedding of $M(\R;Z)$
into ${\bold h}^1(P_+;Z)$. Remarkably,  this embedding is 
an equality, which yields an isometric identification 
\begin{equation}\label{h1}
M(\R;Z)\,\simeq\, {\bold h}^1(P_+;Z).
\end{equation}
We refer to \cite[p. 145 and Theorem 3.2]{Pis2} for the proofs.
Combining (\ref{H1h1}) and  (\ref{h1}), we obtain an isometric embedding
$$
{\bold H}^1(P_+;Z)\subset M(\R;Z).
$$

We regard $L^1(\R;Z)$ as a subspace of $M(\R;Z)$ in a standard way by identifying
any $h\in L^1(\R;Z)$ with the measure $\mu\in M(\R;Z)$ defined by
$$
\mu(E) = \int_E h(t)\, dt
$$
for any Borel set $E\subset\R$. It is well-known (and easy to check)
that the Poisson integral of
any element of $H^1(\R;Z)$ is analytic, hence belongs to ${\bold H}^1(P_+;Z)$.
Thus the isometric isomorphism (\ref{h1}) gives rise to an isometric embedding
\begin{equation}\label{InclusionHardy}
H^1(\R;Z)\subset {\bold H}^1(P_+;Z).
\end{equation}
In the case when $Z=\C$, this embedding is an equality and the resulting identification
\begin{equation}\label{H1H1}
H^1(\R)\simeq {\bold H}^1(P_+)
\end{equation}
is a cornerstone of the theory of classical (=scalar
valued) Hardy spaces, see \cite{Gar, Hof}. In the vector valued case, the 
inclusion (\ref{InclusionHardy}) may be strict. We will come back to this
issue in Remark \ref{ARNP} below.

In the sequel we simply 
write $M(\R)=M(\R;\C)$ for the space of 
bounded Borel measures on $\R$. 
Let $C_0(\R)$ be the Banach algebra  of 
continuous functions $\R\to\C$ which vanish at infinity.
We will use the identification $M(\R)\simeq C_0(\R)^*$ 
(Riesz's Theorem) provided by the duality
pairing
\begin{equation}\label{Riesz}
\langle\mu,f\rangle \,=\, \int_{\tiny{\R}} f(-t)\, d\mu(t),\qquad
\mu\in M(\R),\ f\in C_0(\R).
\end{equation}
In the sequel we let $H^{\infty}_{-}(\R)=\{f(-\,\cdotp)\, :\, f\in H^{\infty}(\R)\}$ 
and let
$Z_0=C_0(\R)\cap H^{\infty}_{-}(\R)$. It was noticed in \cite[Section 3.3]{AL} that
\begin{equation}\label{Z0}
Z_0^\perp =H^1(\R).
\end{equation}
(The minus sign in (\ref{Riesz}) is there to allow this simple description of
$H^1(\R)$ as a dual space.) This
yields an isometric identification
$$
\biggr(\frac{C_0(\R)}{Z_0}\biggl)^*\,\simeq\, H^1(\R).
$$

Our aim is now to establish a vector valued analog of this result.

Let $X$ be a Banach space. We let 
$C_0(\R;X)$ be the Banach space of 
continuous functions from $\R$ into $X$ which vanish at infinity, 
equipped with the supremum norm. Note that $C_0(\R)\otimes X$
is dense in $C_0(\R;X)$ and that the latter identifies with 
$C_0(\R) \overset{\vee}{\otimes} X$, where  $\overset{\vee}{\otimes}$ denotes
the injective tensor product (see e.g. \cite[Chapter VIII]{DU}).
Since any element
of  $M(\R;X^*)$ is a regular measure, it follows from 
\cite[Theorem 1]{Mez} that Riesz's Theorem extends to 
an isometric identification
\begin{equation}\label{Meziani}
C_0(\R;X)^*\,\simeq\,M(\R;X^*).
\end{equation}
More precisely, for any $\mu\in M(\R;X^*)$ and $x\in X$,
let $\mu^x\in M(\R)$ be defined by setting 
$\mu^x(E)=\langle \mu(E),x\rangle$ for any Borel set $E\subset\R$.
We may define
\begin{equation}\label{Meziani2}
\langle \mu, f\otimes x\rangle\,=\, \int_{\tiny{\R}} f(-t)\, d\mu^x(t)\,.
\end{equation}
By linearity, this extends to a duality pairing between
$M(\R;X^*)$ and $C_0(\R)\otimes X$.
Then by (\ref{Meziani}), we mean that this duality pairing 
yields an isometric isomorphism between $M(\R;X^*)$ and $C_0(\R;X)^*$.

\begin{prop}\label{H1Dual2} 
Let $X$ be a Banach space. Through the identification (\ref{Meziani}),
we have 
\begin{equation}\label{H1Dual3} 
\bigl(Z_0\otimes X\bigr)^\perp = {\bold H}^1(P_+;X^*),
\end{equation}
and hence an isometric identification
\begin{equation}\label{H1Dual4} 
\biggr(\frac{C_0(\R;X)}{Z_0 \overset{\vee}{\otimes} X}\biggl)^*
\,\simeq\, {\bold H}^1(P_+;X^*).
\end{equation}
\end{prop}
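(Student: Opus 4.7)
The plan is to reduce the vector-valued identity (\ref{H1Dual3}) to the scalar identity (\ref{Z0}) by slicing with elements of $X$, and then to deduce (\ref{H1Dual4}) from (\ref{H1Dual3}) by standard annihilator/quotient duality.

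For (\ref{H1Dual3}), I would first unpack the pairing (\ref{Meziani2}): a measure $\mu\in M(\R;X^*)$ annihilates $Z_0\otimes X$ if and only if, for every $x\in X$, the scalar measure $\mu^x$ satisfies $\int_{\R}f(-t)\,d\mu^x(t)=0$ for all $f\in Z_0$, i.e.\ $\mu^x\in Z_0^\perp$ in $M(\R)$. By the scalar identity (\ref{Z0}), this is in turn equivalent to $\mu^x\in H^1(\R)$ for every $x\in X$. Next, a direct computation from the definition of the Poisson integral (commuting the pairing with $x$ past the Bochner integral) gives $\phi_{\mu^x}=\langle\phi_\mu(\,\cdotp),x\rangle_{X^*,X}$ on $P_+$, so via the scalar case of (\ref{h1}) combined with (\ref{H1H1}), the condition $\mu^x\in H^1(\R)$ is equivalent to $\phi_{\mu^x}$ being holomorphic on $P_+$. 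Hence $\mu\in(Z_0\otimes X)^\perp$ iff the scalar function $\langle\phi_\mu(\,\cdotp),x\rangle$ is holomorphic for every $x\in X$. Since $\phi_\mu$ is harmonic (and in particular smooth) on $P_+$, the derivative $\partial_{\overline{z}}\phi_\mu$ is a well-defined $X^*$-valued function whose pairing with each $x\in X$ equals $\partial_{\overline{z}}\langle\phi_\mu(\,\cdotp),x\rangle$. The simultaneous vanishing of all these scalar derivatives therefore forces $\partial_{\overline{z}}\phi_\mu=0$, i.e.\ $\phi_\mu\in{\rm Hol}(P_+;X^*)$. Combined with $\phi_\mu\in{\bold h}^1(P_+;X^*)$ (automatic from (\ref{h1})), this gives $\phi_\mu\in{\bold H}^1(P_+;X^*)$, proving (\ref{H1Dual3}).

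For (\ref{H1Dual4}), since $Z_0\otimes X$ is dense in $Z_0\overset{\vee}{\otimes}X$ by definition of the injective tensor norm, and $Z_0\overset{\vee}{\otimes}X$ embeds isometrically as a closed subspace of $C_0(\R;X)\simeq C_0(\R)\overset{\vee}{\otimes}X$, the annihilators satisfy $(Z_0\otimes X)^\perp=(Z_0\overset{\vee}{\otimes}X)^\perp$ inside $C_0(\R;X)^*\simeq M(\R;X^*)$. The standard duality $(C_0(\R;X)/Z_0\overset{\vee}{\otimes}X)^*\simeq(Z_0\overset{\vee}{\otimes}X)^\perp$, combined with (\ref{H1Dual3}), then yields the isometric identification (\ref{H1Dual4}). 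The only non-routine ingredient is the passage from scalar (weak) holomorphy to $X^*$-valued holomorphy; everything else is routine bookkeeping with the Poisson integral, the scalar Hardy-space duality, and elementary annihilator/quotient duality.
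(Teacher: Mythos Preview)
Your proof is correct and follows essentially the same route as the paper's: slice $\mu$ against elements $x\in X$ to reduce to the scalar identity (\ref{Z0}), pass through the Poisson integral and (\ref{H1H1}), and then upgrade weak holomorphy of $\phi_\mu$ to strong holomorphy. Your argument via $\partial_{\overline z}\phi_\mu$ and the separating property of $X$ in $X^*$ is a slightly more explicit justification of the last step than the paper gives, but the overall strategy is identical.
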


\begin{proof} Since $Z_0 \overset{\vee}{\otimes} X$
is the closure of $Z_0\otimes X$ in $C_0(\R;X)$,
the assertion (\ref{H1Dual4}) readily follows from (\ref{H1Dual3}). 
Thus we only need to prove (\ref{H1Dual3}). 

Let $\mu\in M(\R;X^*)$ and let $\phi_\mu$ be its Poisson integral. It is clear that 
for any $x\in X$, the function $\langle \phi_\mu(\,\cdotp),x\rangle\colon
P_+\to\C$ taking any $z\in P_+$ to $\langle \phi_\mu(z),x\rangle$
is the Poisson integral of $\mu^x$. 

By (\ref{Meziani2}), 
$\mu\in \bigl(Z_0\otimes X\bigr)^\perp$ if and only if 
$\mu^x\in Z_0^\perp$ for any $x\in X$. Hence by (\ref{Z0}),
$\mu\in \bigl(Z_0\otimes X\bigr)^\perp$ if and only if 
$\mu^x\in H^1(\R)$ for any $x\in X$. Applying (\ref{H1H1}), we obtain that 
$$
\mu\in \bigl(Z_0\otimes X\bigr)^\perp\,\Longleftrightarrow\,
\forall\, x\in X,\quad
\langle \phi_\mu(\,\cdotp),x\rangle\,\in  {\bold H}^1(P_+).
$$
Since $\phi_\mu\in {\bold h}^1(P_+;X^*)$, the right-hand side 
property is equivalent to the fact that $\phi\in {\rm Hol}(P_+;X^*)$.
Hence it is equivalent to $\phi\in {\bold H}^1(P_+;X^*)$.
\end{proof}

\begin{rq1}\label{ARNP}
We say that $X^*$ has the analytic Radon-Nikodym property
(ARNP in short) if  the 
inclusion (\ref{InclusionHardy}) is an equality. We refer to \cite[Chapter 4]{Pis2} for information 
on this property and equivalent formulations. All Banach spaces with 
the so-called  Radon-Nikodym property (RNP) have ARNP. We recall that 
reflexive spaces and separable duals have RNP \cite[Section VII.7]{DU}, hence ARNP.
Further preduals of von Neumann algebras (in particular, $L^1$-spaces) 
have ARNP, see \cite{BD, HP}.
\end{rq1}

Since $S^1\simeq (S^\infty)^*$ is a separable dual,
it follows from Proposition \ref{H1Dual2} and Remark \ref{ARNP} that
\begin{equation}\label{H1S1}
\biggr(\frac{C_0(\R;S^\infty)}{Z_0 \overset{\vee}{\otimes} S^\infty}\biggl)^*
\,\simeq\, H^1(\R;S^1).
\end{equation}

\subsection{Sarason's Theorem on the real line}\label{Sarason}

Let $\T$ denote the unit circle of the complex plane, 
equipped with its normalized Haar measure $d\nu$.
For any $1\leq p<\infty$, let $H^p(\T)\subset L^p(\T)$ denote the
Hardy space of functions whose Fourier coefficients vanish on negative integers.
For any Banach space $Z$, we let  $H^p(\T;Z)\subset L^p(\T;Z)$ denote the closure of 
$H^p(\T)\otimes Z$ in $L^p(\T;Z)$. These definitions mimic those of $H^p(\R;Z)$
given above.

For any $g_1,g_2\in H^2(\T;S^2)$, the function $g_1g_2$ taking
any $\theta \in\T$ to $g_1(\theta)g_2(\theta)$ belongs to
$H^1(\T;S^1)$ and $\norm{g_1g_2}_{H^1(\T;S^1)}\leq
\norm{g_1}_{H^2(\T;S^2)}\norm{g_2}_{H^2(\T;S^2)}$.
A remarkable theorem of Sarason \cite{Sar} (see also \cite{HP}) asserts
that conversely, for any $g\in H^1(\T;S^1)$, there exist 
 $g_1,g_2\in H^2(\T;S^2)$ such that $g=g_1g_2$ and
$\norm{g_1}_{H^2(\T;S^2)}\norm{g_2}_{H^2(\T;S^2)}=
\norm{g}_{H^1(\T;S^1)}$.

The following real line version of Sarason's Theorem is well-known to
specialists. We include a proof for the sake of completeness.

\begin{prop}\label{Sar}
For any $h$ in $H^1(\mathbb{R};S^1)$, there exist $h_1$ and 
$h_2$ in $H^2(\mathbb{R};S^2)$ such that $h=h_1h_2$ and 
$\norm{h_1}_{H^2(\R;S^2)}\norm{h_2}_{H^2(\R;S^2)}=
\norm{h}_{H^1(\R;S^1)}$.
\end{prop}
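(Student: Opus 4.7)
The plan is to reduce the real-line statement to Sarason's theorem on the circle, which is quoted just above as the starting point. The transfer is carried out via the Cayley transform $\tau\colon\R\to\T\setminus\{1\}$, $\tau(t)=\frac{t-i}{t+i}$, together with the standard weights that make Hardy-space isomorphisms isometric on the real line versus the circle.

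First, I would set up the two isometric isomorphisms I need. For $p=2$, define
$$
U_2\colon H^2(\T;S^2)\longrightarrow H^2(\R;S^2),\qquad
(U_2 g)(t)=\frac{1}{\sqrt{\pi}\,(t+i)}\,g(\tau(t)),
$$
and for $p=1$, define
$$
U_1\colon H^1(\T;S^1)\longrightarrow H^1(\R;S^1),\qquad
(U_1 f)(t)=\frac{1}{\pi(t+i)^2}\,f(\tau(t)).
$$
A change of variables $e^{i\theta}=\tau(t)$ gives $d\theta=\frac{2}{t^2+1}dt=\frac{2}{|t+i|^2}dt$, so the norm identities
$\norm{U_2 g}_{H^2(\R;S^2)}=\norm{g}_{H^2(\T;S^2)}$ and
$\norm{U_1 f}_{H^1(\R;S^1)}=\norm{f}_{H^1(\T;S^1)}$ follow at once. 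The factors $1/(t+i)$ and $1/(t+i)^2$ are outer functions in $H^\infty(\R)$, which, together with the fact that $\tau$ maps the upper half-plane to $\D$, guarantees that these maps really do send Hardy space to Hardy space (the analyticity of the Poisson integrals on $P_+$ is preserved). The definitions make sense for $S^2$- and $S^1$-valued functions because the weights are scalar.

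Now let $h\in H^1(\R;S^1)$ be given. Set $f=U_1^{-1}h\in H^1(\T;S^1)$, so that $\norm{f}_{H^1(\T;S^1)}=\norm{h}_{H^1(\R;S^1)}$. By Sarason's theorem on the circle (the vector-valued case recalled just before the proposition from \cite{Sar, HP}), there exist $g_1,g_2\in H^2(\T;S^2)$ with $f=g_1g_2$ and $\norm{g_1}_{H^2(\T;S^2)}\norm{g_2}_{H^2(\T;S^2)}=\norm{f}_{H^1(\T;S^1)}$. Define $h_j=U_2 g_j\in H^2(\R;S^2)$ for $j=1,2$. Then pointwise
$$
h_1(t)h_2(t)=\frac{1}{\pi(t+i)^2}\,g_1(\tau(t))g_2(\tau(t))=\frac{1}{\pi(t+i)^2}\,f(\tau(t))=(U_1 f)(t)=h(t),
$$
and the norms multiply correctly:
$$
\norm{h_1}_{H^2(\R;S^2)}\norm{h_2}_{H^2(\R;S^2)}=\norm{g_1}_{H^2(\T;S^2)}\norm{g_2}_{H^2(\T;S^2)}=\norm{f}_{H^1(\T;S^1)}=\norm{h}_{H^1(\R;S^1)}.
$$
The inequality $\norm{h}_{H^1(\R;S^1)}\leq \norm{h_1}_{H^2(\R;S^2)}\norm{h_2}_{H^2(\R;S^2)}$ always holds via Cauchy--Schwarz applied to the trace norm, so equality is automatic in one direction and forced in the other.

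The only non-routine point is checking that $U_1$ and $U_2$ really are surjective isometries onto the Hardy spaces (not merely into the Lebesgue spaces), which is where one uses that $(t+i)^{-1}$ is outer and that $\tau$ is a biholomorphism $P_+\to\D$; this is classical in the scalar case and extends diagonally to $S^2$- and $S^1$-valued functions by density of $H^p(\T)\otimes S^p$ in $H^p(\T;S^p)$ (which is how $H^p(\T;Z)$ was defined in Subsection~\ref{Sarason}). This is the step I would write out most carefully, but it is not genuinely difficult.
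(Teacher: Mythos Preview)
Your proposal is correct and follows essentially the same route as the paper: your maps $U_1$ and $U_2$ are precisely the paper's $G_1\overline{\otimes}I_{S^1}$ and $G_2\overline{\otimes}I_{S^2}$, and the reduction to Sarason's theorem on $\T$ via the Cayley transform is carried out in the same way. The paper handles the surjectivity point you flag at the end by observing that, since the scalar map $G_p$ is onto, the range of the vector-valued isometry contains $H^p(\R)\otimes Z$ and is closed, hence equals $H^p(\R;Z)$.
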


\begin{proof}
It is observed in \cite[pp. 121-122]{Hof} that for any $g\in L^1(\mathbb{T})$, the
function $t\mapsto \frac{1}{1+t^2}g\bigl(\frac{t-i}{t+i}\bigr)$ belongs to 
$L^1(\mathbb{R})$ and that we have
\begin{equation}\label{ChangeVar}
\frac{1}{\pi}\int_{-\infty}^\infty g\Bigl(
\frac{t-i}{t+i}\Bigr)\,\frac{dt}{1+t^2}\,=\,\int_{\T} g(\theta)\, d\nu(\theta).
\end{equation}
Further it follows from \cite[Chapter 8]{Hof} (see also \cite[II, Proposition 5.1]{Ric})
that for any $1\leq p<\infty$, we
have an onto isometry $G_p\colon H^p(\mathbb{T})\to H^p(\mathbb{R})$ given by 
$$
[G_p(f)](t)\,=\,\frac{1}{\pi^{\frac{1}{p}}(t+i)^{\frac{2}{p}}}f\Bigl(\frac{t-i}{t+i}\Bigr),
\qquad f\in H^p(\mathbb{T}),\, t\in\R.
$$

Let $Z$ be an arbitrary Banach space. 
Let $f\in H^p(\mathbb{T})\otimes Z$. Using \eqref{ChangeVar}
with $g(\theta)= \|f(\theta)\|^p$, we have 
$$
\frac{1}{\pi}\int_{-\infty}^\infty
\Bignorm{f\Bigl(\frac{t-i}{t+i}\Bigr)}^p\,\frac{dt}{1+t^2}
\,=\,\int_{\T}\|f(\theta)\|^p \,d\nu(\theta).
$$
Since $\bigl\vert (t+i)^{\frac{2}{p}}\bigr\vert^p = 1+t^2$,
this implies that $G_p\otimes I_Z$ extends to an isometry, denoted by
$G_p\overline{\otimes} I_Z$, from $H^p(\mathbb{T}; Z)$ into $H^p(\mathbb{R}; Z)$. 
Since $G_p\overline{\otimes} I_Z$ is an isometry, its range is closed. Since
$G_p$ is onto,
the range of $G_p\overline{\otimes} I_Z$ contains $H^p(\mathbb{R})\otimes Z$. 
Therefore, the range of $G_p\overline{\otimes} I_Z$ is equal
to $H^p(\mathbb{R};Z)$, that is, $G_p\overline{\otimes} I_Z$ is onto.

We now prove the announced factorization result, using $G_1$ and $G_2$.
Let $h\in H^1(\mathbb{R};S^1)$. We may define 
$g=(G_1\overline{\otimes}I_{S^1})^{-1}(h)\in H^1(\mathbb{T};S^1).$ 
Using Sarason's theorem stated above, 
we may write $g=g_1g_2$ with $g_1, g_2\in H^2(\mathbb{T};S^2)$ 
and $\norm{g_1}_{H^2(\T;S^2)}\norm{g_2}_{H^2(\T;S^2)}=
\norm{g}_{H^1(\T;S^1)}$. Let 
$$
h_1=(G_2\overline{\otimes}I_{S^2})(g_1)
\qquad\hbox{and}\qquad
h_2=(G_2\overline{\otimes}I_{S^2})(g_2).
$$
Then,
$\norm{h_1}_{H^2(\R;S^2)}\norm{h_2}_{H^2(\R;S^2)}=
\norm{h}_{H^1(\R;S^1)}$.
Furthermore the definitions of $G_1$ and $G_2$ ensure  that
$$
(G_2\overline{\otimes}I_{S^2})(g_1) (G_2\overline{\otimes}I_{S^2})(g_2) =
(G_1\overline{\otimes}I_{S^1})(g_1g_2),
$$
and hence $h=h_1h_2$.
\end{proof}

\section{A characterization of $S^1$-bounded Fourier multipliers on $H^1(\R)$}
\label{Multipliers}

Let $T\in B(H^1(\R))$. It is shown in \cite[Section 2]{AL} that $T$ commutes with translations
if and only if there exists $m\in L^\infty(\R_+)$ such that 
\begin{equation}\label{Symbol}
\widehat{T(h)} = m \widehat{h},\qquad h\in H^1(\R).
\end{equation}
Such operators are called bounded Fourier multipliers on $H^1(\R)$ and the set
of all bounded Fourier multipliers on $H^1(\R)$ is denoted by  ${\mathcal M}(H^1(\R))$.
For any $T\in  {\mathcal M}(H^1(\R))$, the function $m$ satisfying (\ref{Symbol})
is unique.
In this situation, we set $T=T_m$ and $m$ is called the symbol of $T_m$.
According to \cite[Theorem 2.2]{AL}, the symbol $m$ of a bounded Fourier 
multiplier $T_m$ on $H^1(\R)$
belongs to $C_b(\R_+^*)$, and $\norm{m}_\infty\leq\norm{T_m}$.

Let $Z$ be a Banach space and recall that by definition, $H^1(\R)\otimes Z$ is dense in $H^1(\R;Z)$.
We say that a bounded Fourier multiplier $T\colon H^1(\R)\to H^1(\R)$ is $Z$-bounded if the tensor extension
$T\otimes I_Z\colon H^1(\R)\otimes Z \to H^1(\R)\otimes Z$ extends to a bounded operator
$$
T\overline{\otimes} I_Z\colon H^1(\R;Z)\longrightarrow H^1(\R;Z).
$$

Following \cite{AL}, for any $\mu\in M(\R)$, we let $R_\mu\colon H^1(\R)\to H^1(\R)$ be the restriction of the 
convolution operator $h\mapsto \mu\ast h$ to $H^1(\R)$. This is a bounded Fourier
multiplier. Then we set 
\begin{equation}\label{Trivial}
{\mathcal R} = \{ R_\mu\, :\, \mu\in M(\R)\}\,\subset \MH.
\end{equation}
It is proved in \cite[Proposition 2.4]{AL} that 
a bounded Fourier multiplier $T\colon H^1(\R)\to H^1(\R)$ is $Z$-bounded
for any Banach space $Z$ if and only if $T\in{\mathcal R}$. Moreover for any
$\mu\in  M(\R)$, we have 
\begin{equation}\label{Z}
\bignorm{R_\mu\overline{\otimes} I_Z}\leq\norm{\mu}_{M(\R)}.
\end{equation}

The main result of this section is a characterization 
of $S^1$-bounded Fourier multipliers on $H^1(\R)$.
This is a real line analog of \cite[Theorem 6.2]{Pis1}.

\begin{thm}\label{Main}
Let $m\in L^\infty(\R_+)$  and let $C\geq0$. The following assertions are equivalent.
\begin{itemize}
\item [(i)] The function $m$ is the symbol of an $S^1$-bounded Fourier multiplier 
$T_m\colon H^1(\mathbb{R})\to H^1(\mathbb{R})$, with $\norm{T_m\overline{\otimes} I_{S^1}} \leq C$.  
\item [(ii)] There exist a Hilbert space $\mathcal{H}$ as well as  two
functions $\alpha,\beta \in L^\infty(\mathbb{R}_+;\mathcal{H})$ 
such that $\|\alpha\|_\infty\|\beta\|_\infty\leq C$ and  
$$
m(s+t)=\langle\alpha(t),\beta(s)\rangle_{\mathcal H}, 
$$
for almost every $(s,t)\in\mathbb{R}_{+}\times \mathbb{R}_{+}$.
\end{itemize}
\end{thm}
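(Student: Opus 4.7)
My plan mirrors Pisier's strategy for $H^1(\T)$ in \cite[Theorem 6.2]{Pis1}: use Proposition \ref{Sar} (the real-line Sarason factorization) to reduce the $H^1(\R;S^1)$-valued problem to a bilinear estimate on $H^2(\R;S^2)\times H^2(\R;S^2)$, and use Lemma \ref{V2} to identify the Hilbert-space factorization in (ii) with the statement that the Hankel-type kernel $\varphi(s,t)=m(s+t)$ on $\R_+^2$ belongs to $\mathcal{V}_2(\R_+\times\R_+)$.

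For $(ii)\Rightarrow(i)$, given $h\in H^1(\R;S^1)$, I would invoke Proposition \ref{Sar} to write $h=h_1h_2$ with $h_1,h_2\in H^2(\R;S^2)$ and $\norm{h_1}_{H^2(\R;S^2)}\norm{h_2}_{H^2(\R;S^2)}=\norm{h}_{H^1(\R;S^1)}$. Since $\widehat{h}_1,\widehat{h}_2$ are supported in $\R_+$, one has $\mathcal{F}(h_1h_2)(u)=\frac{1}{2\pi}\int_0^u\widehat{h}_1(t)\widehat{h}_2(u-t)\,dt$. Fixing an orthonormal basis $(e_j)$ of $\mathcal{H}$ and writing $m(s+t)=\sum_j\alpha_j(t)\overline{\beta_j(s)}$ for $\alpha_j(t)=\langle\alpha(t),e_j\rangle_{\mathcal H}$, $\beta_j(s)=\langle\beta(s),e_j\rangle_{\mathcal H}$, I would define $\varphi_j,\psi_j\in H^2(\R;S^2)$ by $\widehat{\varphi}_j(t)=\alpha_j(t)\widehat{h}_1(t)$ and $\widehat{\psi}_j(t)=\overline{\beta_j(t)}\widehat{h}_2(t)$. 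A Fourier-side computation then gives $T_m(h)=\sum_j\varphi_j\psi_j$ (with convergence in $L^1(\R;S^1)$); the pointwise Cauchy--Schwarz inequality $\bignorm{\sum_jA_jB_j}_{S^1}\leq\bigl(\sum_j\norm{A_j}_{S^2}^2\bigr)^{1/2}\bigl(\sum_j\norm{B_j}_{S^2}^2\bigr)^{1/2}$ for $(A_j),(B_j)\subset S^2$, followed by integration in $x$ and Plancherel \eqref{Plancherel}, yields $\norm{T_m(h)}_{H^1(\R;S^1)}\leq\norm{\alpha}_\infty\norm{\beta}_\infty\norm{h}\leq C\norm{h}$.

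For the converse $(i)\Rightarrow(ii)$, the plan is to show that $\varphi(s,t)=m(s+t)$ lies in $\mathcal{V}_2(\R_+\times\R_+)$ with $\nu_2(\varphi)\leq C$; Lemma \ref{V2}(1) then furnishes the required $\alpha,\beta$. Using the duality \eqref{gamma2*}, this reduces to the bilinear estimate
\[
\Bigl|\sum_i\int_0^\infty\!\!\int_0^\infty m(s+t)f_i(s)g_i(t)\,ds\,dt\Bigr|\leq C\,\gamma_2^*\Bigl(\sum_i f_i\otimes g_i\Bigr)
\]
on finite tensors in $L^1(\R_+)\otimes L^1(\R_+)$. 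Given such a tensor, I would encode the families $(f_i)$ and $(g_i)$ into $H^2(\R;S^2)$-valued functions $h_1,h_2$ via matrix-unit Fourier transforms supported on $\R_+$, form $h_1h_2\in H^1(\R;S^1)$ using Sarason's direction, apply the hypothesis $\norm{T_m\overline{\otimes} I_{S^1}}\leq C$, and pair $T_m(h_1h_2)$ against a suitable test element in the predual $C_0(\R;S^\infty)/Z_0\overset{\vee}{\otimes}S^\infty$ from \eqref{H1S1}; the substitution $u=s+t$ on the Fourier side then recovers the target integral.

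The main obstacle will be this last step of the converse. The naive test element requires its Fourier transform to equal a matrix identity on all of $\R_+$, which cannot live in $C_0(\R;S^\infty)$, forcing an approximation by bumps while preserving the equivalence class modulo $Z_0\overset{\vee}{\otimes}S^\infty$. Moreover, the direct Sarason bound naturally produces an $\ell^2$-type norm rather than $\gamma_2^*$, so an additional step is needed to bridge the gap---presumably by taking the infimum over all valid matrix-unit encodings of a given tensor. The $w^*$-closedness of the unit ball of $\mathcal{V}_2$ in Lemma \ref{V2}(2) should justify the limiting arguments, and, as a backup, a Hahn--Banach separation argument against that $w^*$-closed unit ball offers an alternative route to the factorization.
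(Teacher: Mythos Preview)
Your plan for $(ii)\Rightarrow(i)$ matches the paper's proof essentially verbatim: Sarason factorization $h=h_1h_2$, decomposition of $m(s+t)$ along an orthonormal basis of $\mathcal H$, the auxiliary $H^2(\R;S^2)$ functions with Fourier transforms $\alpha_j\widehat{h_1}$ and $\overline{\beta_j}\,\widehat{h_2}$, and the Cauchy--Schwarz/Plancherel estimate. No issue there.

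The gap is in $(i)\Rightarrow(ii)$. You correctly reduce the task to showing the bilinear estimate
\[
\Bigl|\sum_i\int_0^\infty\!\!\int_0^\infty m(s+t)f_i(s)g_i(t)\,ds\,dt\Bigr|\;\leq\;C\,\gamma_2^*\Bigl(\sum_i f_i\otimes g_i\Bigr),
\]
but the mechanism you sketch cannot produce the $\gamma_2^*$ norm on the right. Encoding $(f_i),(g_i)\subset L^1(\R_+)$ as Fourier transforms of $H^2(\R;S^2)$ functions already fails at the level of integrability ($L^1$ versus $L^2$), and even if one forces such an encoding through ad hoc factorizations, the resulting bound is a product of $\ell^2(L^2)$-type norms, not $w_2((f_i))\,w_2((g_i))$, which is what Pisier's formula for $\gamma_2^*$ requires. ``Taking the infimum over encodings'' does not recover this: the set of $H^2(\R;S^2)$ encodings of a fixed tensor is not rich enough to witness the weak-$\ell^2$ norm. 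Your Hahn--Banach backup is circular, since a separating functional lives in the predual $L^1\otimes_{\gamma_2^*}L^1$ and testing it is exactly the estimate you are trying to prove. (If you relax the constant, Grothendieck's theorem gives $\gamma_2(S_\varphi)\le K_G\|S_\varphi\|$ and you could get by with the bilinear form bounded by $C$ times the projective norm; but the theorem asks for the sharp constant $C$, so this shortcut is unavailable.)

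The paper takes a completely different, operator-algebraic route. It does not attempt the bilinear estimate at all. Instead it introduces the map $Q\colon L^\infty(\R)\to B(H^2(\R))$, $\langle Q(f)h_1,h_2\rangle=\int f\,h_1\,\overline{h_2}(-\cdot)$, observes that $Q=W_2^*\rho(\cdot)W_1$ is completely contractive with kernel $H^\infty(\R)$, and then shows that the induced map $\widetilde{Q}\,T_m^*\,q\colon L^\infty(\R)\to B(H^2(\R))$ is completely bounded with cb-norm $\le C$ (this is where the $S^1$-boundedness hypothesis enters, via the duality $H^1(\R;S^1_n)^*\simeq L^\infty(\R;M_n)/H^\infty(\R;M_n)$). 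The Haagerup--Paulsen--Wittstock Theorem~\ref{Wittstock} then factors the restriction to $C_0(\R)$ as $V_2^*\pi(\cdot)V_1$; the non-degenerate $*$-representation $\pi$ of $C_0(\R)$ is generated by a strongly continuous unitary representation $\sigma$ of $\R$, and an explicit choice of unit vectors $\zeta_\epsilon\in H^2(\R)$ yields
\[
m(s+t+\epsilon)=\bigl\langle\sigma(t+\tfrac{\epsilon}{2})V_1(e_{t+\epsilon/2}\zeta_\epsilon),\,\sigma(-s-\tfrac{\epsilon}{2})V_2(e_{s+\epsilon/2}\zeta_\epsilon)\bigr\rangle_{\mathcal H}
\]
for all $s,t\ge 0$. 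This places each shifted symbol $m_\epsilon$ in $\mathcal V_2(\R_+^2)$ with $\nu_2\le C$, and the $w^*$-closedness of the unit ball of $\mathcal V_2$ (Lemma~\ref{V2}(2)), together with the continuity of $m$ on $\R_+^*$, passes this to $m$ itself. The two ingredients you are missing are precisely Theorem~\ref{Wittstock} and the passage from $\pi$ to the unitary group $\sigma$; without them there is no visible way to manufacture the Hilbert-space valued $\alpha,\beta$ with the correct norm.
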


Condition (ii) means that the function $(s,t)\mapsto m(s+t)$ belongs to the 
space ${\mathcal V}_2(\mathbb{R}_{+}\times \mathbb{R}_{+})$ defined in Subsection \ref{Gamma2}.

The statement of Theorem \ref{Main}
and its proof are closely related to the theory of completely bounded maps.
We now supply some background on this topic and refer to Remark \ref{CBH1} for more on this
aspect. Let $A$ and $B$ be $C^*$-algebras. A bounded map
$w\colon A\to B$ is called completely bounded if there exists a constant $C\geq0$ such that 
$$
\forall\, n\geq 1,\qquad \bignorm{w\otimes I_{M_n}\colon M_n(A)\longrightarrow M_n(B)}\,\leq C.
$$
In this case, the smallest possible $C\geq0$ is called
the completely bounded norm of $w$ and is denoted by $\norm{w}_{cb}$.  In the above definition,
$M_n(A)$ (resp. $M_n(B)$) is the unique $C^*$-algebra built upon
$A\otimes M_n$ (resp. $B\otimes M_n$),  
whose product and involution 
are compatible with those of $A$ and $M_n$ (resp. of  $B$ and $M_n$). We refer the
reader to \cite{Pau} for more explanations and basics on complete boundedness.
We will need the following fundamental result, known as the Haagerup-Paulsen-Wittstock theorem.

\begin{thm}\label{Wittstock} ${\rm (}$\cite[Theorem 8.4]{Pau}${\rm )}$
Let $K$ be a Hilbert space, let $A$ be a $C^*$-algebra, let $w\colon A\to B(K)$
be a bounded map and let $C\geq 0$ be a constant. The following assertions are equivalent.
\begin{itemize}
\item [(i)] The map $w$ is completely bounded and $\norm{w}_{cb}\leq C$.
\item [(ii)] There exist a Hilbert space $\mathcal{H}$, a non-degenerate
$*$-representation $\pi\colon A\to B(\mathcal{H})$  as well as  two operators $V_1,V_2\colon K\to \mathcal{H}$ 
such that
$$
w(a) \,=\, V_2^* \pi(a) V_1,\qquad a\in A.
$$
\end{itemize}
\end{thm}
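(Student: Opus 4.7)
The plan is to prove (ii)$\Rightarrow$(i) directly and to obtain the hard direction (i)$\Rightarrow$(ii) by reducing the cb factorization problem to a completely positive one via Paulsen's $2\times 2$ off-diagonal trick, and then invoking Arveson's extension theorem together with Stinespring's dilation theorem. For the easy implication, if $w(a)=V_2^*\pi(a)V_1$ for a $*$-representation $\pi$, then $w\otimes I_{M_n}(x)=(V_2\otimes I_n)^*(\pi\otimes I_n)(x)(V_1\otimes I_n)$ for $x\in M_n(A)$; since $\pi\otimes I_n$ is itself a $*$-representation of $M_n(A)$ and hence contractive, $\|w\otimes I_{M_n}\|\leq\|V_1\|\|V_2\|$ uniformly in $n$, giving $\|w\|_{cb}\leq C$.

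For (i)$\Rightarrow$(ii), after rescaling so that $C=1$ and replacing $A$ by its unitization if necessary, I would introduce the operator system
\[
S\,=\,\left\{\begin{bmatrix}\lambda 1_A & a \\ b^* & \mu 1_A\end{bmatrix}\,:\,\lambda,\mu\in\C,\ a,b\in A\right\}\,\subset\, M_2(A),
\]
together with the unital selfadjoint map $\Phi\colon S\to M_2(B(K))$ defined by
\[
\Phi\!\begin{pmatrix}\lambda 1_A & a \\ b^* & \mu 1_A\end{pmatrix}\,=\,\begin{bmatrix}\lambda I_K & w(a)\\ w(b)^* & \mu I_K\end{bmatrix}.
\]
The key lemma (Paulsen's $2\times 2$ trick) asserts that $\Phi$ is completely positive if and only if $\|w\|_{cb}\leq 1$; this rests on the standard criterion that a $2\times 2$ operator matrix $\bigl[\begin{smallmatrix}P&X\\X^*&Q\end{smallmatrix}\bigr]$ is positive exactly when $P,Q\geq 0$ and $X=P^{1/2}YQ^{1/2}$ for some contraction $Y$, applied at every matrix level.

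Having made $\Phi$ completely positive, I would extend $\Phi$ by Arveson's theorem to a cp map $\widetilde{\Phi}\colon M_2(A)\to M_2(B(K))$ and invoke Stinespring's dilation theorem to produce a Hilbert space $\widetilde{L}$, a $*$-representation $\rho\colon M_2(A)\to B(\widetilde{L})$ and an operator $W\colon K\oplus K\to \widetilde{L}$ with $\widetilde{\Phi}(x)=W^*\rho(x)W$ and $\|W\|^2=\|\widetilde{\Phi}(1)\|\leq 1$. Using the isomorphism $M_2(A)\simeq A\otimes M_2$, the restriction of $\rho$ to $1_A\otimes M_2$ is a $*$-representation of $M_2$, which forces $\rho$ to be unitarily equivalent to $\pi\otimes I_{M_2}$ for some $*$-representation $\pi\colon A\to B(\mathcal{H})$ with $\widetilde{L}\simeq\mathcal{H}\oplus\mathcal{H}$. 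Decomposing $W$ into four blocks $K\to\mathcal{H}$ and reading off the $(1,2)$-entry of $\widetilde{\Phi}\bigl(\bigl[\begin{smallmatrix}0&a\\0&0\end{smallmatrix}\bigr]\bigr)$ produces operators $V_1,V_2\colon K\to\mathcal{H}$ satisfying $w(a)=V_2^*\pi(a)V_1$ with $\|V_1\|\|V_2\|\leq\|W\|^2\leq 1$. Non-degeneracy of $\pi$ is then arranged by restricting to the essential subspace $\overline{\pi(A)\mathcal{H}}$ and restricting $V_1,V_2$ accordingly.

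The principal obstacle I anticipate is verifying the Paulsen $2\times 2$ lemma itself, namely matching complete positivity of $\Phi$ on $M_n(S)$ with the cb norm bound $\|w\otimes I_{M_n}\|\leq 1$ at every level $n$. Once this cornerstone is in place, the rest is an orchestrated application of Arveson's and Stinespring's theorems together with the bookkeeping needed to extract $\pi$, $V_1$ and $V_2$ from the dilation of the $2\times 2$ amplification.
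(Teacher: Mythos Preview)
The paper does not give its own proof of this theorem; it is simply quoted from Paulsen's textbook \cite[Theorem 8.4]{Pau} and used as a black box. Your sketch is precisely the standard argument found there --- the off-diagonal $2\times 2$ trick reducing complete boundedness to complete positivity, followed by Arveson's extension theorem and Stinespring's dilation --- so there is nothing further to compare.
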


In the following proof we will use the fact that 
for any Hilbert space $H$,
the Fourier transform $L^2(\R;H)\to L^2(\R;H)$
induces an isomorphism from
$H^2(\R;H)$ onto $L^2(\R_+;H)$.

\begin{proof}[Proof of Theorem \ref{Main}]
\ 

$(ii)\,\Rightarrow\,(i):$  Assume (ii). 
We may and do assume that  $\|\alpha\|_\infty=1$ and $\|\beta\|_\infty=1$.

Since $\mathbb{R}_+$ is $\sigma$-finite and 
$\alpha\colon\R_+\to\mathcal{H}$ is measurable,
we may assume, by \cite[Theorem II.1.2]{DU},
that $\mathcal{H}$ is separable. We can also assume that it is infinite dimensional. 
Let $(e_n)_{n\geq1}$ be an orthonormal basis 
of $\mathcal{H}$. We may write
\[
m(s+t)=\mathop{\sum}\limits_{n=1}^\infty\alpha_n(t)\beta_n(s)
 \qquad\hbox{for\ 
a.e.-}(s,t)\in\mathbb{R}_{+}^2,
\]
where $\alpha_n=\langle\alpha(\cdot),e_n\rangle_{\mathcal{H}}$ 
and $\beta_n=\langle e_n,\beta(\cdot)\rangle_{\mathcal{H}}$
for all $n\geq 1$. 

Suppose that $h\in H^1(\mathbb{R};S^1)$ with $\|h\|_1=1$. By Proposition \ref{Sarason}, 
there exist $h_1,h_2\in H^2(\mathbb{R};S^2)$ with $\|h_1\|_2=1$, $\|h_2\|_2=1$,
such that $h=h_1h_2$. For any $n\geq1$, $\alpha_n$ and $\beta_n$
belong to $L^\infty(\R_+)$, hence $\alpha_n\widehat{h_1}$ and $ \beta_n\widehat{h_2}$ belong to
$L^2(\R_+;S^2)$. Consequently, there exist $h_{1,n}, h_{2,n}\in H^2(\mathbb{R};S^2)$ such that
\[
\widehat{h_{1,n}}=\alpha_n\widehat{h_1}\qquad\text{and}\qquad\widehat{h_{2,n}}=\beta_n\widehat{h_2}.
\]
Using the Cauchy-Schwarz inequality, we have  
\begin{equation}\label{CS}
\mathop{\sum}\limits_{n=1}^\infty\|h_{1,n}\|_2\|h_{2,n}\|_2
\leq\Bigl(\mathop{\sum}_{n=1}^{\infty}\|h_{1,n}\|_2^2\Bigr)^{\frac12}
\Bigl(\mathop{\sum}_{n=1}^{\infty}\|h_{2,n}\|_2^2\Bigr)^{\frac12}.
\end{equation}
Moreover, using (\ref{Plancherel}), we have
\begin{align*}
\mathop{\sum}_{n=1}^{\infty}\|h_{1,n}\|_2^2
&=\,\frac{1}{2\pi}\,\int_0^\infty \sum_{n=1}^{\infty} \lvert\alpha_n(t)\rvert^2\|\widehat{h_1}(t)\|_2^2\ dt\\
&=\,\frac{1}{2\pi}\,\int_{0}^\infty\|\alpha(t)\|^2_{\mathcal{H}}\|\widehat{h_1}(t)\|_2^2\ dt\\
& \leq \,\frac{1}{2\pi}\,\|\alpha\|_{\infty}^2  \norm{\widehat{h_1}}_2^2  =\|\alpha\|_{\infty}^2\|h_1\|_2^2=1.
\end{align*}
With a similar argument, we have $\mathop{\sum}\limits_{n=1}^\infty\|h_{2,n}\|_2^2\leq1$. Hence using \eqref{CS}, we obtain that
the product functions $h_{1,n}h_{2,n} \in H^1(\R;S^1)$ satisfy
\begin{equation}\label{Norm1}
\mathop{\sum}\limits_{n=1}^\infty\|h_{1,n}h_{2,n}\|_1\leq\mathop{\sum}\limits_{n=1}^\infty\|h_{1,n}\|_2\|h_{2,n}\|_2\leq1.
\end{equation}
Therefore, 
$$
\Phi :=\mathop{\sum}\limits_{n=1}^\infty h_{1,n}h_{2,n}
$$
is a well-defined element of $H^1(\mathbb{R};S^1)$.
Furthermore,  we have $\widehat{\Phi}=\mathop{\sum}_{n=1}^\infty\widehat{h_{1,n}h_{2,n}}$,
the series being convergent in $C_0(\mathbb{R};S^1)$. 

Consider $u>0$.
For every $n\geq1$, we have
\begin{align*}
\widehat{h_{1,n}h_{2,n}}(u)&=\int_0^u\widehat{h_{1,n}}(u-s)\widehat{h_{2,n}}(s)\, ds\\
&=\int_0^u\alpha_n(u-s)\beta_n(s)\widehat{h_1}(u-s)\widehat{h_2}(s)\, ds.
\end{align*}
Thus we have
\[
\widehat{\Phi}(u)=\mathop{\sum}\limits_{n=1}^{\infty}\int_0^u\alpha_n(u-s)\beta_n(s)\widehat{h_1}(u-s)\widehat{h_2}(s)\, ds.
\]
The summations $\mathop{\sum}\limits_{n=1}^{\infty}$ and $\int_0^u$ can be switched. Indeed arguing as above, we have
\begin{align*}
\int_0^u \sum_{n=1}^\infty  
\bignorm{\alpha_n(u-s) & \beta_n(s)  \widehat{h_1}(u-s)\widehat{h_2}(s)}_{S^1}\,  ds\\
&\leq\Bigl(\int_0^u\mathop{\sum}\limits_{n=1}^{\infty}
\lvert\alpha_n(u-s)\rvert^2\|\widehat{h_1}(u-s)\|_{S^2}^2 \,ds\Bigr)^{\frac{1}{2}}
\Bigl(\int_0^u\mathop{\sum}\limits_{n=1}^{\infty}
\lvert\beta_n(s)\rvert^2\|\widehat{h_2}(s)\|_{S^2}^2\,ds\Bigr)^{\frac{1}{2}}\\
&\leq\Bigl(\int_0^\infty\mathop{\sum}\limits_{n=1}^{\infty}\lvert\alpha_n(s')\rvert^2
\|\widehat{h_1}(s')\|_{S^2}^2\, ds'\Bigr)^{\frac{1}{2}}
\Bigl(\int_0^\infty\mathop{\sum}\limits_{n=1}^{\infty} \lvert\beta_n(s)\rvert^2 \|\widehat{h_2}(s)\|_{S^2}^2\,ds\Bigr)^{\frac{1}{2}}\\
&\leq \norm{\alpha}_\infty
\norm{\beta}_\infty\norm{\widehat{h_1}}_2 \norm{\widehat{h_2}}_2\, <\infty.\\
\end{align*}
Therefore, for all $u>0$, we have
$$
\widehat{\Phi}(u)=\displaystyle{\int_0^u}
\mathop{\sum}\limits_{n=1}^\infty 
\alpha_n(u-s)\beta_n(s) \widehat{h_1}(u-s)\widehat{h_2}(s) \, ds.
$$
For almost every $u>0$, we have 
$m(u) =\mathop{\sum}\limits_{n=1}^{\infty} \alpha_n(u-s)\beta_n(s)$, 
for almost every $s\in(0,u)$. Hence we obtain that
\[
\widehat{\Phi}(u)=
m(u)\int_0^u\widehat{h_1}(u-s)\widehat{h_2}(s)\, ds\,=m(u)\widehat{h}(u)
\]
for almost every $u>0$.

By  (\ref{Norm1}), we have $\|\Phi\|_1\leq1$.
Hence the equality $\widehat{\Phi} = m\widehat{h}$ shows that $m$ is the symbol of an
$S^1$-bounded Fourier multiplier $T_m$ and 
that $\norm{T_m\overline{\otimes} I_{S^1}}\leq1$.

\smallskip
$(i)\,\Rightarrow\,(ii):$
We use the classical duality pairing between $L^1(\R)$ and $L^\infty(\R)$ leading to $L^1(\R)^*\simeq L^\infty(\R)$.
Then we  have $H^1(\R)^\perp = H^\infty(\R)$, hence $H^1(\R)^*\simeq
\frac{L^\infty(\R)}{H^\infty(\R)}$.
According to \cite[Section IV.1]{DU}, the above extends to  $L^1(\R; S^1_n)^*\simeq L^\infty(\R;M_n)$
for all $n\geq 1$.  Hence we have  an isometric identification
\begin{equation}\label{H1Hinf}
H^1(\R;S^1_n)^*\,\simeq\, \frac{L^\infty(\R;M_n)}{H^\infty(\R;M_n)}
\end{equation}
for all $n\geq 1$.

Let $Q:L^\infty(\mathbb{R})\to B(H^2(\mathbb{R}))$ be given by
\[
\langle Q(f)h_1,h_2\rangle_{H^2(\R)}
=\int f(t) h_1(t)\overline{h_2}(-t)\, dt, \qquad f\in L^\infty(\R),\, h_1,h_2\in H^2(\mathbb{R}).
\]
Observe that $Q$ is completely bounded, with $\norm{Q}_{cb}\leq 1$. 
Indeed, let $\rho\colon L^\infty(\mathbb{R})\to B(L^2(\mathbb{R}))$ be the $*$-representation given by $\rho(f)h= fh$,
for $f\in L^\infty(\R)$ and $h\in L^2(\mathbb{R})$. Let $W_1\colon
H^2(\mathbb{R})\to L^2(\mathbb{R})$ be the inclusion map and let $W_2\colon
H^2(\mathbb{R})\to L^2(\mathbb{R})$ be defined by $W_2(h)=h(-\,\cdotp)$ for $h\in H^2(\mathbb{R})$. 
Then $Q=W_2^*\rho(\,\cdotp)W_1$. Hence the result follows from the easy 
implication ``$(ii)\Rightarrow(i)$" of Theorem \ref{Wittstock}.

The mapping $h\mapsto \overline{h}(-\cdotp)$ is a unitary of $H^2(\R)$. Hence
by the scalar case of Proposition \ref{Sarason}, the set $\{h_1\overline{h_2}(-\,\cdotp)\, :\, h_1,\,h_2\in H^2(\R)\}$ is equal 
to $H^1(\R)$. Therefore we have ${\rm Ker}(Q)=H^\infty(\mathbb{R})$. We let 
$$
\widetilde{Q}\colon \frac{L^\infty(\mathbb{R})}{H^\infty(\mathbb{R})}\longrightarrow B(H^2(\mathbb{R}))
$$
be induced by $Q$. Then it follows from above that
\begin{equation}\label{wQ-cb}
\Bignorm{\widetilde{Q}\otimes I_{M_n}\colon 
\frac{L^\infty(\mathbb{R};M_n)}{H^\infty(\mathbb{R};M_n)}\longrightarrow M_n\bigl(B(H^2(\mathbb{R}))\bigr)}\leq 1,\qquad n\geq 1.
\end{equation}

We note for further use that for any $a\in \frac{L^\infty(\mathbb{R})}{H^\infty(\mathbb{R})}$ and any $h_1,h_2\in H^2(\R)$, we have
\begin{equation}\label{Q}
\langle\widetilde{Q}(a) h_1,h_2\rangle_{H^2(\R)}\,=\,
\langle a, h_1\overline{h_2}(-\,\cdotp)\rangle_{\frac{L^\infty(\mathbb{R})}{H^\infty(\mathbb{R})}, H^1(\R)}.
\end{equation}

Assume (i) and consider the adjoint map 
$T_m^*\colon \frac{L^\infty(\mathbb{R})}{H^\infty(\mathbb{R})}\to \frac{L^\infty(\mathbb{R})}{H^\infty(\mathbb{R})}$. 
According to (\ref{H1Hinf}), we have
\begin{equation}\label{Tm-cb}
\Bignorm{T_m^*\otimes I_{M_n}\colon 
\frac{L^\infty(\mathbb{R};M_n)}{H^\infty(\mathbb{R};M_n)}\longrightarrow
\frac{L^\infty(\mathbb{R};M_n)}{H^\infty(\mathbb{R};M_n)}}\leq C,\qquad n\geq 1.
\end{equation}

For any $s\in\R$, we let $e_s\in L^\infty(\R)$ be defined by 
$e_s(t)=e^{ist}$. We will use the fact that $e_s\in H^\infty(\R)$ if (and only if)
$s\geq 0$. Let $\gamma\colon\mathbb{R}\to B(H^2(\mathbb{R}))$ be defined by 
$$
\gamma(s)=Q(e_{-s}),\qquad s\in\R.
$$
Let $q\colon L^\infty(\R)\to \frac{L^\infty(\R)}{H^\infty(\R)}$ denote the canonical quotient map. We claim that 
for all $s>0$, we have
\begin{equation}\label{m-gamma}
\widetilde{Q}T_m^*q(e_{-s})=m(s)\gamma(s).
\end{equation}
To check this, let $h_1,h_2\in H^2(\R)$ and let $h= h_1\overline{h_2}(-\,\cdotp)$. This function
belongs to $H^1(\R)$. Then we have
\begin{align*}
\bigl\langle\widetilde{Q}\bigl[ T_m^*q(e_{-s})\bigr]h_1,h_2\bigr\rangle_{H^2(\R)}
&=\langle T_m^*q(e_{-s}) ,h\rangle_{\frac{L^\infty(\mathbb{R})}{H^\infty(\mathbb{R})}, H^1(\R)}\\
&=\langle q(e_{-s}),T_m(h)\rangle_{\frac{L^\infty(\mathbb{R})}{H^\infty(\mathbb{R})},H^1(\mathbb{R})}\\
&=\widehat{T_m(h)}(s)\\
&=m(s)\widehat{h}(s)\\
&=m(s)\langle\gamma(s)h_1,h_2\rangle_{H^2(\R)},
\end{align*}
by (\ref{Q}) and the definition of $T_m$.
This proves (\ref{m-gamma}).

By (\ref{wQ-cb}) and (\ref{Tm-cb}), the mapping 
$\widetilde{Q} T_m^* q\colon L^\infty(\R)\to B(H^2(\R))$ is completely bounded, 
with $\norm{\widetilde{Q}T_m^*q}_{cb}\leq C$.
By Theorem \ref{Wittstock} applied to the restriction of $\widetilde{Q}T_m^*q$ to 
$C_0(\R)$, there exist a Hilbert space $\mathcal{H}$, a non-degenerate
$\ast$-representation $\pi\colon C_0(\mathbb{R})\to B(\mathcal{H})$ and two
operators  $V_1,V_2\colon H_2(\mathbb{R})\to\mathcal{H}$ such that 
$$
\widetilde{Q}T_m^*q_{\vert C_0(\R)}=V_2^*\pi(\,\cdotp)V_1
\qquad\hbox{and}\qquad
\norm{V_1}\norm{V_2}\leq C.
$$

Regard $L^1(\R)$ as a Banach algebra for convolution.
Since the Fourier transform $\mathcal{F}\colon L^1(\R)\to C_0(\R)$ is a contractive homomorphism
with dense range, the mapping $\pi\circ\mathcal{F}\colon L^1(\R)\to  B(\mathcal{H})$
is a contractive non-degenerate homomorphism. Hence by \cite[Proposition 13.3.4]{Dix}, there exists a strongly continuous unitary  representation
$\sigma\colon\R\to B(\mathcal{H})$ which ``generates" $\pi\circ\mathcal{F}$ in the following sense. For any
$g\in L^1(\R)$,
$$
\pi(\widehat{g}) = (\pi\circ\mathcal{F})(g) \,=\, \int_{-\infty}^\infty g(t)\sigma(t)\,dt,
$$
where the integral is defined in the strong operator topology. We claim that for all $s\in\R$, we have
\begin{equation}\label{sigma}
\widetilde{Q} T_m^* q(e_{-s}) \,=\, V_2^* \sigma(s)V_1.
\end{equation}
To prove this, we fix $s\in\R$, we introduce a function $\eta \in L^1(\R)$ with $\int_\R\eta=1$, and 
we consider the approximate
unit $(\eta_\lambda)_{\lambda>0}$ defined by $\eta_\lambda(t)=\lambda\eta(\lambda t)$ 
for all $\lambda>0$ and $t\in\R$. On the one hand, we note that $\widehat{\eta_\lambda}\to 1$ pointwise, hence
by Lebesgue's dominated convergence theorem, $e_{-s}\widehat{\eta_\lambda}(-\,\cdotp)\to e_{-s}$ in the $w^*$-topology 
of $L^\infty(\R)$, when $\lambda\to\infty$. By construction,
$\widetilde{Q} T_m^* q\colon L^\infty(\R)\to B(H^2(\R))$ is $w^*$-continuous. 
Hence we deduce that
$$
\widetilde{Q} T_m^* q(e_{-s}) \,=\, w^*-\lim_{\lambda\to\infty} 
\widetilde{Q} T_m^* q\bigl(e_{-s}\widehat{\eta_\lambda}(-\,\cdotp)\bigr).
$$
On the other hand, take
$\xi_1,\xi_2\in\mathcal{H}$. The function
$t\mapsto\langle\sigma(t)\xi_1,\xi_2\rangle$ is continuous hence
$$
\int_{-\infty}^\infty \eta_\lambda (s-t)\langle\sigma(t)\xi_1,\xi_2\rangle_{\mathcal H}
\, dt\,
\overset{\lambda\to\infty}{\longrightarrow}\,
\langle\sigma(s)\xi_1,\xi_2\rangle_{\mathcal H},
$$
which means that
$$
\bigl\langle (\pi\circ\mathcal{F})\bigl(\eta_\lambda(s-\,\cdot)\bigr)\xi_1,\xi_2\bigr\rangle_{\mathcal H}
\overset{\lambda\to\infty}{\longrightarrow}\,
\langle\sigma(s)\xi_1,\xi_2\rangle_{\mathcal H}.
$$
Since $\bignorm{(\pi\circ\mathcal{F})\bigl(\eta_\lambda(s-\,\cdot)\bigr)}
\leq\norm{\eta}_1$
for any $\lambda>0$, this
implies that $(\pi\circ\mathcal{F})\bigl(\eta_\lambda(s-\,\cdot)\bigr)\to \sigma(s)$
in the $w^*$-topology of $B(\mathcal{H})$, when $\lambda\to\infty$. Consequently,
$$
V_2^*\sigma(s)V_1
\,=\,w^*-\lim_{\lambda\to\infty} 
V_2^*\bigl[\pi\circ\mathcal{F}\bigl(\eta_\lambda(s-\,\cdotp)\bigr)\bigr]V_1.
$$
Since $\mathcal{F}\bigl(\eta_\lambda(s-\,\cdotp)\bigr)=e_{-s}\widehat{\eta_\lambda}(-\,\cdotp)$,
the latter belongs to $C_0(\R)$ and we actually obtain that 
$$
V_2^*\sigma(s)V_1
\,=\, w^*-\lim_{\lambda\to\infty} 
\widetilde{Q} T_m^* q\bigl(e_{-s}\widehat{\eta_\lambda}(-\,\cdotp)\bigr).
$$
The identity (\ref{sigma}) follows at once.

Next we show that for any $\epsilon>0$, the function $m_{\epsilon}
\colon \mathbb{R}_+\to\mathbb{C}$ given by 
\[
m_{\epsilon}(t)=m(\epsilon+t),\qquad t\geq 0,
\]
satisfies the condition (ii) of Theorem \ref{Main}.

Let $\chi_\epsilon$ be the indicator function
of the interval $(0,\epsilon)$. Then $\|\epsilon^{-\frac12}\chi_{\epsilon}\|_2=1$ 
and $\mathcal{F}^{-1}\bigl(\epsilon^{-\frac12 }\chi_{\epsilon}\bigr)$ belongs to $H^2(\mathbb{R})$. 
We set $\zeta_{\epsilon} = \sqrt{2\pi}\,e_{-\frac{\epsilon}{2}}\mathcal{F}^{-1}\bigl(\epsilon^{-\frac12 }\chi_\epsilon\bigr)$. 
We have
\begin{align*}
\int_{-\infty}^{\infty} \zeta_{\epsilon}(u)\overline{\zeta_{\epsilon}(-u)}\,du\, &=\,
\frac{2\pi}{\epsilon}\int_{-\infty}^{\infty} e^{-i\epsilon u}
\bigl[\mathcal{F}^{-1}(\chi_{\epsilon})\mathcal{F}^{-1}(\chi_{\epsilon})\Bigr](u)\, du\\
&=\,\frac{2\pi}{\epsilon}\mathcal{F}[\mathcal{F}^{-1}(\chi_{\epsilon})\mathcal{F}^{-1}(\chi_{\epsilon})](\epsilon)\\
&=\frac{1}{\epsilon}\left(\chi_{\epsilon}\ast \chi_{\epsilon}\right)(\epsilon)=1.
\end{align*}

By construction, $e_{\frac{\epsilon}{2}}\zeta_{\epsilon}\in H^2(\mathbb{R})$
hence  $e_{t+\frac{\epsilon}{2}}\zeta_{\epsilon}\in H^2(\mathbb{R})$, for any $t\geq 0$.
Moreover by (\ref{Plancherel}), we have
\begin{equation}\label{=1}
\norm{e_{t+\frac{\epsilon}{2}}\zeta_{\epsilon}}_2 = 
\|\zeta_{\epsilon}\|_2=1,\qquad t\geq 0.
\end{equation}

Then we may  define $\alpha_{\epsilon}\colon\R_+\to\mathcal{H}$ and 
$\beta_{\epsilon}\colon\R_+\to\mathcal{H}$ by
\[
\alpha_{\epsilon}(t)=\sigma(t+\tfrac{\epsilon}{2}) V_1(e_{t+\frac{\epsilon}{2}}\zeta_{\epsilon})
\qquad\text{and}\qquad
\beta_{\epsilon}(s)=\sigma(-s-\tfrac{\epsilon}{2})V_2(e_{s+\frac{\epsilon}{2}}\zeta_{\epsilon}).
\]
Since $\sigma$ is a unitary representation, we have, using (\ref{m-gamma}) and (\ref{sigma}),
\begin{align*}
\langle \alpha_{\epsilon}(t),\beta_{\epsilon}(s)\rangle_{\mathcal H}
&=\langle\sigma(t+\tfrac{\epsilon}{2})V_1(e_{t+\frac{\epsilon}{2}} \zeta_{\epsilon}),
\sigma(-s-\tfrac{\epsilon}{2}) V_2(e_{s+\frac{\epsilon}{2}}\zeta_{\epsilon})\rangle_{\mathcal H}\\
&=\langle V_2^*\sigma(-s-\tfrac{\epsilon}{2})^* \sigma(t+\tfrac{\epsilon}{2}) 
V_1(e_{t+\frac{\epsilon}{2}}\zeta_{\epsilon}),e_{s+\frac{\epsilon}{2}}\zeta_\epsilon\rangle_{\mathcal H}\\
&= \langle V_2^*\sigma(s+t+ \epsilon) 
V_1 (e_{t+\frac{\epsilon}{2}}\zeta_{\epsilon}) ,e_{s+\frac{\epsilon}{2}}\zeta_\epsilon\rangle_{\mathcal H}\\
&=m(s+t+\epsilon)\langle\gamma(s+t+\epsilon) e_{t+\frac{\epsilon}{2}}\zeta_{\epsilon} ,
e_{s+\frac{\epsilon}{2}}\zeta_{\epsilon}\rangle_{\mathcal H}.
\end{align*}
Further
\begin{align*}
\langle\gamma(s+t+\epsilon) e_{t+\frac{\epsilon}{2}}\zeta_{\epsilon} ,e_{s+\frac{\epsilon}{2}}\zeta_{\epsilon}\rangle_{\mathcal H}
& = \int_{-\infty}^{\infty} e^{-i(t+s+\epsilon)u} e^{i(t+\frac{\epsilon}{2})u}\zeta_{\epsilon}(u) 
e^{i(s+\frac{\epsilon}{2})u}\overline{\zeta_{\epsilon}}(-u)\, du\\
& = \int_{-\infty}^{\infty} \zeta_{\epsilon}(u)\overline{\zeta_{\epsilon}(-u)}\,du =1
\end{align*}
by the above computation. Thus we obtain that
$$
m_{\epsilon}(t+s)\,=\,\langle \alpha_{\epsilon}(t),\beta_{\epsilon}(s)\rangle_{\mathcal H},\qquad s,t \geq 0.
$$
The function  $\alpha_\epsilon$ is continuous, hence measurable. Moreover by (\ref{=1}), we
have $\norm{\alpha_\epsilon(t)}\leq\norm{V_1}$
for all $t\geq 0$. Thus we have $\alpha_\epsilon\in L^\infty(\R_+;\mathcal{H})$
with $\norm{\alpha_\epsilon}_\infty\leq \norm{V_1}$. 
Likewise  $\beta_\epsilon \in L^\infty(\R_+;\mathcal{H})$
with $\norm{\beta_\epsilon}_\infty\leq \norm{V_2}$.
Hence $\norm{\alpha_\epsilon}_\infty\norm{\beta_\epsilon}_\infty\leq C$.

We define $\varphi_\epsilon$ and $\varphi$ in $L^\infty(\R_+^2)$ by
$$
\varphi(s,t) = m(s+t)
\quad\text{and}\quad \varphi_\epsilon(s,t) = m_\epsilon(s+t),\qquad s,t >0.
$$
With the terminology introduced in Subsection \ref{Gamma2}, we have just proved that $\varphi_\epsilon\in\mathcal{V}_2(\R_+^2)$
and that $\nu_2(\varphi_\epsilon)\leq C$ for all $\epsilon>0$. 
Since $m\colon\R_+^*\to\C\,$ 
is continuous, by \cite[Theorem 2.2]{AL}, $m_\epsilon\to m$ pointwise on $\R_+^*$. Since $m$ 
is bounded, we deduce by Lebesgue's dominated convergence theorem  that $\varphi_\epsilon\to\varphi$
in the $w^*$-topology of  $L^\infty(\R_+^2)$. Applying Lemma \ref{V2} (2), we deduce that
$\varphi$ belongs to $\mathcal{V}_2(\R_+^2)$, with $\nu_2(\varphi)\leq C$. 
This means that $m$ satisfies (ii).
\end{proof}

\begin{rq1}\label{Q-Cont}
Let $m\in L^\infty(\R_+)$ be the symbol of an $S^1$-bounded Fourier multiplier on 
$S^1(\R)$ and for any $\epsilon>0$, let $m_\varepsilon\in  L^\infty(\R_+)$ be defined
by $m_\epsilon(t)=m(\epsilon +t)$. We know from \cite[Theorem 2.2]{AL} that $m\in C_b(\R_+^*)$. 
Furthermore it follows
from the above proof that for any $\epsilon>0$, there exist a Hilbert space ${\mathcal H}$
and continuous bounded functions
$\alpha,\beta\colon\R_+^*\to{\mathcal H}$ such that $m_\epsilon(s+t) =\langle \alpha(t),\beta(s)\rangle_{\mathcal H}$
for all $s,t>0$. However we do not know if the assertion (ii) of Theorem \ref{Main}
holds true for some continuous bounded functions 
$\alpha$ and $\beta$ from $\R_+^*$ into ${\mathcal H}$.
\end{rq1}

\begin{rq1}\label{CBH1}
This remark is about operator space theory, for which we refer e.g.  either to
\cite{ER} for a comprehensive treatment, or to  \cite[Chapter 1]{BLM} for a summary.
In this framework,  $H^1(\R)$ has a natural operator space structure. It is obtained
by regarding  $H^1(\R)$ as the sub-operator space of $L^1(\R)$ equipped with its so-called 
maximal operator space structure \cite[1.2.22]{BLM}.
Assume that 
$\frac{L^\infty(\R)}{H^\infty(\R)}$ is equipped with the quotient operator 
space structure induced by the $C^*$-algebra structure of $L^\infty(\R)$ \cite[1.2.14]{BLM}.
Then it follows from the operator space duality
of subspaces and quotients \cite[1.4.4]{BLM} that the identity
$$
H^1(\R)^*\simeq
\frac{L^\infty(\R)}{H^\infty(\R)}
$$
holds completely isometrically. Consequently, a bounded map $T\colon H^1(\R)\to H^1(\R)$
is completely bounded if and only if $T^*\colon \frac{L^\infty(\R)}{H^\infty(\R)}\to \frac{L^\infty(\R)}{H^\infty(\R)}$
is completely bounded, and $\norm{T}_{cb}=\norm{T^*}_{cb}$ in this case \cite[1.4.3]{BLM}.
It therefore follows from (\ref{H1Hinf}) that $T$ is $S^1$-bounded if
and only if it is completely bounded, and that $\norm{T\overline{\otimes} I_{S^1}}=\norm{T}_{cb}$
in this case.

Thus Theorem \ref{Main} actually provides a characterization of completely bounded 
Fourier multipliers on $H^1(\R)$. 
\end{rq1}

\begin{rq1}\label{Not-cb}
Let $r\in\R^*$ and let $m\colon \R_+^*\to\C$ be defined by
$$
m(t) = t^{ir},\qquad t>0.
$$
It is well-known that $m$ is the symbol of a bounded Fourier multiplier 
on $H^1(\R)$. A proof of this fact is obtained by applying
\cite[Lemma 4.1]{L} with $F(z)=z^{ir}$ together with
\cite[Theorem 1.2]{L} with $X=\C$. However $m$ is not 
$S^1$-bounded. This follows from \cite[Theorem 1.2]{L} 
and the fact that $S^1$ does not satisfy the so-called analytic
unconditional martingale difference (AUMD) property \cite[Theorem 4.1]{HP}.

We will see in Remark \ref{CB-Reg} that there exist $S^1$-bounded
multipliers on $H^1(\R)$ which do not belong to the space $\mathcal R$ defined by (\ref{Trivial}).
\end{rq1}

\section{The algebra $\A_{0,S^1}$ and its duality properties}
\label{A0}

From now on we let ${\mathcal M}_{S^1}(H^1(\R))$ denote the space of all 
$S^1$-bounded Fourier multipliers $T\colon H^1(\R)\to H^1(\R)$, equipped
with the norm
$$
\bignorm{T\overline{\otimes} I_{S^1}\colon H^1(\R;S^1)
\to H^1(\R;S^1)}.
$$
It is easy to check that ${\mathcal M}_{S^1}(H^1(\R))$ is a Banach space.
By construction, the mapping $T\mapsto T\overline{\otimes} I_{S^1}$
provides an isometric embedding
$$
{\mathcal M}_{S^1}(H^1(\R))\subset B\bigl(H^1(\R;S^1)\bigr).
$$
In this section we introduce a Banach algebra, denoted
by $\A_{0,S^1}(\R)$, whose dual space is isometrically isomorphic
to ${\mathcal M}_{S^1}(H^1(\R))$. This will be fully exploited 
in Section \ref{FC}.

The space $\A_{0,S^1}(\R)$
is a variant of the Banach algebra $\A_{0}(\R)$ introduced 
in \cite{AL}. Some of the results and arguments below are similar to the ones in 
\cite[Subsections 3.1--3.3]{AL} so we will allow ourselves not to
write all the details.

For any $f\in C_0(\R;S^\infty)$ and $h\in H^1(\R;S^1)$, we let $f\ast h\in C_0(\R)$
be the function obtained by mixing convolution 
and $(S^\infty, S^1)$-duality as follows:
$$
(f\ast h)(s) =\int_{-\infty}^{\infty} tr\bigl(f(t)h(s-t)\bigr)\, dt,\qquad
s\in\R.
$$
It is clear that $\norm{f\ast h}_\infty\leq\norm{f}_\infty\norm{h}_1$,
where $\norm{f}_\infty$ and $\norm{h}_1$ denote the norm
of $f$ in $C_0(\R;S^\infty)$ and the norm of $h$ in $H^1(\R;S^1)$, respectively.

We let $\mathcal{A}_{0,S^1}(\R)$ 
be the space of all functions $F
\colon \R \rightarrow \C$ such that there 
exist two sequences $(f_k)_{k\geq 1}$ in 
$C_0(\R;S^\infty)$  
and $(h_k)_{k\geq 1}$  in $H^1(\R;S^1)$ satisfying
\begin{equation}\label{DefA0}
\sum_{k=1}^{\infty} \normeinf{f_k}\norme{h_k}_1 < \infty
\qquad\hbox{and}\qquad
F(s) = \sum_{k=1}^{\infty} f_k\ast h_k(s),\quad s\in\R.
\end{equation}
For such a function $F$, we 
set
\begin{equation}\label{DefNormA0}
\norme{F}_{\A_{0,S^1}} = \inf \Bigl\{\sum_{k=1}^{\infty} 
\normeinf{f_k}\norme{h_k}_1 \Bigr\},
\end{equation}
where the infimum runs over all sequences  $(f_k)_{k\geq 1}$ in 
$C_0(\R;S^\infty)$  
and $(h_k)_{k\geq 1}$  in $H^1(\R;S^1)$ satisfying (\ref{DefA0}). 

It is clear that $\A_{0,S^1}(\R)\subset C_0(\R)$ and that 
\begin{equation}\label{Contraction}
\norme{F}_\infty\leq \norme{F}_{\A_{0,S^1}},\qquad 
F \in \A_{0,S^1}(\R).
\end{equation}
Further the argument in \cite[Subsection 3.1]{AL} showing
that $\A_{0}(\R)$ is a Banach space shows as well
that $\A_{0,S^1}(\R)$ is a Banach space.

\begin{prop}\label{BanAlg}
The space $\A_{0,S^1}(\R)$ is a Banach algebra for the pointwise multiplication.  
\end{prop}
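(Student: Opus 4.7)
The plan is to adapt the strategy used for the scalar algebra $\A_0$ in \cite[Subsection 3.1]{AL}. First, by the series definition (\ref{DefA0}) of $\A_{0,S^1}(\R)$ and the infimum definition (\ref{DefNormA0}) of its norm, combined with the bilinearity of the convolution-trace operation $(f,h)\mapsto f\ast h$, I would reduce the problem to showing that for any $f,g\in C_0(\R;S^\infty)$ and $h,k\in H^1(\R;S^1)$, the pointwise product $(f\ast h)(g\ast k)$ lies in $\A_{0,S^1}(\R)$ with
$$\bignorm{(f\ast h)(g\ast k)}_{\A_{0,S^1}}\leq \norme{f}_\infty\norme{h}_1\norme{g}_\infty\norme{k}_1.$$
Given this single-term bound, if $F_1=\sum_k f_k\ast h_k$ and $F_2=\sum_j g_j\ast k_j$ are absolutely convergent representations, then $F_1 F_2=\sum_{k,j}(f_k\ast h_k)(g_j\ast k_j)$ converges uniformly on $\R$, is of the required type, and taking infima over representations yields $\norme{F_1 F_2}_{\A_{0,S^1}}\leq \norme{F_1}_{\A_{0,S^1}}\norme{F_2}_{\A_{0,S^1}}$.

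The next step is to rewrite the product. After fixing a unitary $\ell^2\otimes\ell^2\simeq\ell^2$ and identifying $S^\infty(\ell^2\otimes\ell^2)\simeq S^\infty$ and $S^1(\ell^2\otimes\ell^2)\simeq S^1$, the trace identity $tr(A)\,tr(B)=tr(A\otimes B)$ gives
$$(f\ast h)(s)(g\ast k)(s)=\int\!\!\int tr\bigl((f(t)\otimes g(u))(h(s-t)\otimes k(s-u))\bigr)\,dt\,du.$$
The core task is now to re-express this two-variable trace integral as a one-variable convolution (or an absolutely convergent sum of such) of the form required by (\ref{DefA0}).

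To carry out the two-dimensional to one-dimensional reduction, I would invoke the real-line Sarason theorem (Proposition \ref{Sar}) to factor $h=h_a h_b$ and $k=k_a k_b$ with $h_a,h_b,k_a,k_b\in H^2(\R;S^2)$ satisfying $\norme{h_a}_2\norme{h_b}_2=\norme{h}_1$ and $\norme{k_a}_2\norme{k_b}_2=\norme{k}_1$. Substituting these factorizations into the integrand and applying cyclicity of the trace turns the product into an $L^2(\R^2;S^2)$ Hilbert-space pairing between an $S^\infty$-weighted $H^2$-type function (built from $f$, $g$, $h_a$, $k_a$) and a second $H^2$-type function (built from $h_b$, $k_b$). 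Expanding this pairing along an orthonormal basis of the separable Hilbert space and applying Fubini should yield an absolutely convergent series $\sum_n(\tilde f_n\ast\tilde h_n)(s)$ with $\tilde f_n\in C_0(\R;S^\infty)$ and $\tilde h_n\in H^1(\R;S^1)$, the norm estimate following from Cauchy--Schwarz together with the Sarason identities.

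The main obstacle is exactly this 2D-to-1D collapse: the pointwise product of two convolutions naturally involves two independent integrals, while $\A_{0,S^1}(\R)$ is defined via single convolutions. The tools that make the reduction possible are (i) the Sarason factorization, which injects the Hilbert-space structure of $H^2(\R;S^2)$ into the problem, and (ii) the fact that products of $H^2(\R;S^2)$ functions remain of Hardy type with Fourier support in $\R_+$, ensuring that the resulting $\tilde h_n$ lie in $H^1(\R;S^1)$ and not merely in $L^1(\R;S^1)$.
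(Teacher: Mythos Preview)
Your reduction to a single-term estimate and the tensor identification $tr(A)\,tr(B)=tr(A\otimes B)$ are correct and match the paper's setup. The gap is in your proposed 2D-to-1D collapse. The Sarason factorization $h=h_ah_b$, $k=k_ak_b$ does not couple the two integration variables: after factoring, $(f\ast h)(s)\,(g\ast k)(s)$ remains a product of two \emph{separate} one-variable integrals, and no orthonormal-basis expansion of an $L^2(\R^2;S^2)$ pairing will produce terms of the convolution shape $\int\tilde f_n(v)\,\tilde h_n(s-v)\,dv$ in a single variable $v$. You have correctly named the obstacle but not resolved it; Sarason's theorem is used elsewhere in the paper (in the proof of Theorem~\ref{Main}) but plays no role here.

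The paper's device is a shift-parameter trick, not a factorization. For each $\sigma\in\R$ one sets
\[
\varphi_\sigma(t)=f(t)\otimes g(t-\sigma)\in S^\infty\otimes_{\min}S^\infty,
\qquad
\psi_\sigma(t)=h(t)\otimes k(t+\sigma)\in S^1\otimes_{d_1}S^1,
\]
and uses the identifications $S^\infty\otimes_{\min}S^\infty\simeq S^\infty$, $S^1\otimes_{d_1}S^1\simeq S^1$ via $\ell^2\overset{2}{\otimes}\ell^2\simeq\ell^2$. Then $\varphi_\sigma\ast\psi_\sigma$ is, for each fixed $\sigma$, a genuine single-variable convolution of the type defining $\A_{0,S^1}(\R)$, and one checks by a change of variables that
\[
\int_{-\infty}^{\infty}\varphi_\sigma\ast\psi_\sigma\,d\sigma=(f\ast h)(g\ast k),
\]
the left side being a Bochner integral in $\A_{0,S^1}(\R)$. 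Absolute convergence holds because $\|\varphi_\sigma\|_\infty\leq\|f\|_\infty\|g\|_\infty$ for all $\sigma$, while $\int_\R\|\psi_\sigma\|_1\,d\sigma=\|h\|_1\|k\|_1$. The analyticity issue you raise in your point~(ii) is handled by observing directly that $\sigma\mapsto\psi_\sigma$ lands in $L^1\bigl(\R;H^1(\R;S^1)\bigr)$, which follows from $h,k\in H^1(\R;S^1)$ without any Sarason-type input.
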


\begin{proof}
We let $\ell^2\overset{2}\otimes\ell^2$ denote the Hilbertian tensor product
of two copies of $\ell^2$. Then we
let $S^\infty\otimes_{\rm min} S^\infty$ be the minimal tensor
product of two copies of $S^\infty$ and we recall the 
$*$-isomorphic identification 
$$
S^\infty\otimes_{\rm min} S^\infty\simeq 
S^\infty\bigl(\ell^2\overset{2}\otimes\ell^2\bigr),
$$
see e.g. \cite[Chapter 12]{Pau}. 
Similarly, regard $S^1\otimes S^1\subset S^1(\ell^2\overset{2}\otimes\ell^2)$ in 
the standard way. This is a dense subspace. Hence if we let $d_1$ denote the norm on $S^1\otimes S^1$ induced
by $S^1(\ell^2\overset{2}\otimes\ell^2)$ and if we let $S^1\otimes_{d_1} S^1$
denote the completion of $(S^1\otimes S^1, d_1)$, we have an isometric 
identification 
$$
S^1\otimes_{d_1} S^1  \simeq 
S^1(\ell^2\overset{2}\otimes\ell^2).
$$ 
(The reader familiar with operator
space theory will observe that $d_1$ is equal to the operator 
space projective tensor norm \cite[1.5.11]{BLM}.)

Let $f_1, f_2 \in C_0(\R;S^\infty)$ and $h_1,h_2 \in H^1(\R;S^1)$. 
We define, for any $s\in \R$, 
$$
\varphi_s\colon \R \longrightarrow 
S^\infty\otimes_{\rm min} S^\infty
\qquad\hbox{and}\qquad \psi_s 
\colon \R \longrightarrow S^1\otimes_{d_1} S^1
$$
by
\[
\varphi_s(t) = f_1(t)\otimes f_2(t-s) 
\qquad\hbox{and}\qquad
\psi_s(t) = h_1(t)\otimes h_2(t+s).
\]
Since $f_2$ is uniformly continuous, 
$s \mapsto \varphi_s$ is a continuous and bounded function from $\R$ into $C_0(\R;
S^\infty\otimes_{\rm min} S^\infty)$, hence an element of the 
space $L^{\infty}\bigl(\R; C_0(\R;S^\infty\otimes_{\rm min} S^\infty)\bigr)$. 
Since 
\begin{align*}
\int_{-\infty}^{\infty}\int_{-\infty}^{\infty}
\norm{h_1(t)\otimes h_2(t+s)}_{S^1\otimes_{d_1} S^1}\,dtds \, & =\,
\int_{-\infty}^{\infty}\int_{-\infty}^{\infty}\norm{h_1(t)}_{S^1}
\norm{h_2(t+s)}_{S^1}
\,dtds\\
& = \norme{h_1}_1\norme{h_2}_1,
\end{align*}
the function $s\mapsto \psi_s$ is defined almost everywhere
and belongs to $L^{1}\bigl(\R; L^1(\R;S^1\otimes_{d_1} S^1)\bigr)$.
The fact that $h_1,h_2$ belong to the analytic space $H^1(\R;S^1)$
ensures that $s\mapsto \psi_s$ actually
belongs to $L^{1}\bigl(\R; H^1(\R;S^1\otimes_{d_1}  S^1)\bigr)$.

Using natural identifications
$$
S^\infty\otimes_{\rm min} S^\infty\,\simeq S^\infty
\qquad\hbox{and}\qquad
S^1\otimes_{d_1} S^1\,\simeq S^1
$$
provided by $\ell^2\overset{2}\otimes\ell^2\simeq \ell^2$,
we see that for almost every $s\in \R$,
the function
$$
\varphi_s\ast \psi_s \colon u\mapsto\,
\int_{-\infty}^{\infty} tr\bigl(f_1(t)h_1(u-t)\bigr)
tr\bigl(f_2(t-s)h_2(u-t+s)\bigr)\, dt
$$
belongs to $\A_{0,S^1}(\R)$. Moreover 
the function $s\mapsto  \varphi_s\ast \psi_s$ 
belongs to $L^{1}\bigl(\R; \A_{0,S^1}(\R)\bigr)$, with
\begin{align*}
\int_{-\infty}^{\infty}\norme{\varphi_s\ast \psi_s}_{\A_{0,S^1}}ds 
&\leq \int_{-\infty}^{\infty} \norme{\varphi_s}_{\rm min}\norme{\psi_s}_{d_1} ds\\
&\leq \normeinf{f_1}\normeinf{f_2}\int_{-\infty}^{\infty}\norme{\psi_s}_{d_1} ds\\
&\leq \normeinf{f_1}\normeinf{f_2}\norme{h_1}_1\norme{h_2}_1.
\end{align*}
Now arguing as in the proof of \cite[Proposition 3.4]{AL}, we obtain that 
$$
\int_{-\infty}^{\infty}\varphi_s\ast \psi_s\, ds = (f_1\ast h_1)(f_2\ast h_2),
$$
and hence 
that  $(f_1\ast h_1)(f_2\ast h_2)$ belongs to
$\A_{0,S^1}(\R)$, with
$$
\norme{(f_1\ast h_1)(f_2\ast h_2)}_{\A_{0,S^1}} \leq 
\normeinf{f_1}\normeinf{f_2}\norme{h_1}_1\norme{h_2}_1.
$$
With this estimate in hands, the reasoning in  the proof of 
\cite[Proposition 3.4]{AL}
shows that $\A_{0,S^1}(\R)$ is a Banach algebra.
\end{proof}

Recall that the definition
of the Banach algebra  $\A_0(\R)$ introduced in 
\cite[Definitions 3.1 and 3.2]{AL}
is similar to the one of $\A_{0, S^1}(\R)$, except that 
the sequences $(f_k)_{k\geq 1}$ and $(h_k)_{k\geq 1}$ used in
(\ref{DefA0}) and (\ref{DefNormA0}) are requested to
belong to the scalar valued spaces 
$C_0(\R)$ and $H^1(\R)$, respectively. Thus we have 
$\A_{0}(\R)\subset \A_{0, S^1}(\R)$ and
\begin{equation}\label{A0A0S1}
\norm{F}_{\A_{0,S^1}}\leq 
\norm{F}_{\A_{0}},\qquad F\in\A_0(\R).
\end{equation}

We now turn to duality. We will use the fact that combining
(\ref{H1S1}) and (\ref{Proj}), we have an isometric identification
\begin{equation}\label{BH1-dual}
B\bigl(H^1(\R;S^1)\bigr)\simeq 
\biggl(\frac{C_0(\R;S^\infty)}{Z_0\overset{\vee}{\otimes} 
S^\infty}\,\widehat{\otimes}\, H^1(\R;S^1)\biggr)^*.
\end{equation}
Thus $B(H^1(\R;S^1))$ is a dual space.

For any $s\in\R$, we use the notation $\tau_s$ to denote the
translation operator $h\mapsto h(\,\cdotp -s)$ on 
$H^1(\R;S^1)$. 
Then for any $f\in C_0(\R;S^\infty)$
and $h\in H^1(\R;S^1)$, we have
$$
(f\ast h)(s) = \langle \tau_{-s}h, f\rangle,\qquad s\in\R,
$$
where the duality product on the right-hand side
is given by (\ref{Meziani}) and (\ref{Meziani2}).
By Proposition \ref{H1Dual2}, this implies that 
$f\ast h=0$ for any $f\in Z_0\overset{\vee}{\otimes} S^\infty$ and 
$h\in H^1(\R;S^1)$. Applying \cite[Section VIII.2, Theorem 1]{DU},
we deduce the existence of 
a necessarily unique contractive map
$$
\Lambda \colon 
\frac{C_0(\R;S^\infty)}{Z_0\overset{\vee}{\otimes} S^\infty}\, 
\widehat{\otimes}\, H^1(\R;S^1) \longrightarrow \A_{0,S^1}(\R)
$$
such that 
$$
\Lambda(\dot{f}\otimes h)\,=\,
f\ast h,\qquad f\in C_0(\R;S^\infty),\, h\in H^1(\R;S^1).
$$
Here $\dot{f}$ denotes the class of $f$ in 
$\frac{C_0(\R;S^\infty)}{Z_0\overset{\vee}{\otimes} S^\infty}$.
Moreover by the very definition of  $\A_{0,S^1}(\R)$, $\Lambda$ 
is a quotient map, in the sense of Definition \ref{Quotient}.

Regarding its adjoint as valued in
$B(H^1(\R;S^1))$, by (\ref{BH1-dual}), we obtain
that 
$$
\Lambda^*\colon \A_{0,S^1}(\R)^*\longrightarrow
B\bigl(H^1(\R;S^1)\bigr)
$$
is an isometry.

\begin{prop}\label{Range}
The range of $\Lambda^*$ is equal to ${\mathcal M}_{S^1}(H^1(\R))$.
\end{prop}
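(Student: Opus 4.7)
My plan is to prove both set inclusions separately, paralleling the scalar case of \cite[Section 3.3]{AL}. Since $\Lambda$ is a quotient map, $\Lambda^*$ is an isometric embedding, so the computations below will simultaneously give the isometric identification $\A_{0,S^1}(\R)^*\simeq\mathcal{M}_{S^1}(H^1(\R))$. The two basic computational tools are the elementary-tensor convolution identity
$$
(\phi\otimes a)\ast(\psi\otimes b)\,=\,tr(ab)\,(\phi\ast\psi),\qquad \phi\in C_0(\R),\,\psi\in H^1(\R),\,a\in S^\infty,\,b\in S^1,
$$
and the Fourier identity $\widehat{f\ast h}(u)=tr\bigl(\widehat{f}(u)\widehat{h}(u)\bigr)$ for $f\in C_0(\R;S^\infty)$ and $h\in H^1(\R;S^1)$, valid (at least) whenever both are integrable.

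For the forward inclusion, fix $\xi\in\A_{0,S^1}(\R)^*$ and set $V=\Lambda^*(\xi)$, so that $\langle V(h),\dot{f}\rangle=\xi(f\ast h)$. Define a candidate $T\colon H^1(\R)\to H^1(\R)$ implicitly by
$$
\langle T(\psi),\dot\phi\rangle\,=\,\xi(\phi\ast\psi),\qquad \psi\in H^1(\R),\,\phi\in C_0(\R).
$$
The first point requiring verification is that $T(\psi)$ genuinely lies in $H^1(\R)$, equivalently, that $\phi\mapsto\xi(\phi\ast\psi)$ vanishes on $Z_0$. This reduces to showing $\phi\ast\psi\equiv 0$ whenever $\phi\in Z_0$ and $\psi\in H^1(\R)$, which follows from the identity $(\phi\ast\psi)(s)=\langle\psi,\tau_{-s}\phi\rangle$ together with the translation-invariance of $Z_0$ and the relation $H^1(\R)=Z_0^\perp$ from (\ref{Z0}). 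A parallel argument, using $(f\ast h)(s)=\langle\tau_{-s}h,\dot f\rangle$, shows that $T$ commutes with translations, so $T\in\mathcal{M}(H^1(\R))$. Finally, the elementary-tensor identity combined with a direct computation of $\langle T(\psi)\otimes b,\phi\otimes a\rangle$ via (\ref{Meziani2}) shows that $V$ agrees with $T\otimes I_{S^1}$ on the dense subspace $H^1(\R)\otimes S^1$ of $H^1(\R;S^1)$; boundedness of $V$ then forces $T$ to be $S^1$-bounded with $V=T\overline{\otimes} I_{S^1}$.

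For the reverse inclusion, fix $T=T_m\in\mathcal{M}_{S^1}(H^1(\R))$, put $V=T\overline{\otimes} I_{S^1}$, and attempt to define $\xi\in\A_{0,S^1}(\R)^*$ by
$$
\xi\Bigl(\sum_{k\geq 1}f_k\ast h_k\Bigr)\,=\,\sum_{k\geq 1}\langle V(h_k),\dot{f}_k\rangle
$$
for any representation as in (\ref{DefA0}). The crux is well-definedness: if $\sum_k f_k\ast h_k=0$ in $C_0(\R)$, one must show $\sum_k\langle V(h_k),\dot{f}_k\rangle=0$. Rewriting each term as $\langle V(h_k),\dot{f}_k\rangle=(f_k\ast V(h_k))(0)$, it suffices to prove $\sum_k f_k\ast V(h_k)\equiv 0$. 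By the Fourier identity, the hypothesis translates to $\sum_k tr(\widehat{f_k}(u)\widehat{h_k}(u))=0$ for every $u$; on $\R_+^*$ one has $\widehat{V(h_k)}(u)=m(u)\widehat{h_k}(u)$, so multiplying by $m(u)$ yields $\sum_k tr(\widehat{f_k}(u)\widehat{V(h_k)}(u))=0$, while for $u<0$ the sum vanishes because each $\widehat{V(h_k)}$ is supported in $\R_+$. Hence the Fourier transform of $\sum_k f_k\ast V(h_k)$ is identically zero, yielding the desired identity. The bound $|\xi(F)|\leq\|V\|\,\|F\|_{\A_{0,S^1}}$ is immediate by taking infima over representations, and $\Lambda^*(\xi)=V$ holds on elementary tensors by construction.

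The main obstacle I expect is making the Fourier-transform manipulations in the second step fully rigorous: the partial sums $\sum_{k=1}^N f_k\ast h_k$ are bounded and continuous but need not be integrable, so the Fourier identity $\widehat{f\ast h}(u)=tr(\widehat f(u)\widehat h(u))$ must be justified either by approximating the $f_k$ by elements of $C_0(\R;S^\infty)\cap L^1(\R;S^\infty)$ and passing to the limit, or by reducing to the scalar case via the elementary-tensor decomposition of $f_k$ and $h_k$ and invoking the analogous scalar identity from \cite{AL}.
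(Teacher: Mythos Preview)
Your forward inclusion is correct and is actually more direct than the paper's: you identify $\Lambda^*(\xi)$ explicitly as $T\overline{\otimes}I_{S^1}$ by testing on elementary tensors, whereas the paper obtains this inclusion as a by-product of the identity ${\rm Ran}(\Lambda^*)=\overline{\mathcal{R}_{S^1}}^{w^*}$ together with a separate Krein--Smulian argument that $\mathcal{M}_{S^1}(H^1(\R))$ is $w^*$-closed.

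The reverse inclusion, however, has a genuine gap. Your well-definedness claim amounts to showing that $\sum_k f_k\ast h_k=0$ forces $\sum_k f_k\ast V(h_k)=0$, and your argument relies on the identity $\widehat{f_k\ast h_k}(u)=tr(\widehat{f_k}(u)\widehat{h_k}(u))$. But $f_k\in C_0(\R;S^\infty)$ need not be integrable, so $\widehat{f_k}$ is at best a tempered distribution, and the pointwise product $tr(\widehat{f_k}(u)\widehat{h_k}(u))$ is meaningless. Neither of your proposed fixes works as stated. For the approximation route, replacing each $f_k$ by a nearby $f_k^{(j)}\in C_0\cap L^1$ gives no control on $\|f_k^{(j)}\|_1$, so the series $\sum_k f_k^{(j)}\ast h_k$ need not converge in $L^1$ and you cannot take its Fourier transform; and even if it did, you would only know that this sum tends to $0$ in $C_0$, not that it vanishes for fixed $j$, so there is nothing to multiply by $m$. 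For the elementary-tensor route, a general element of $\frac{C_0(\R;S^\infty)}{Z_0\overset{\vee}{\otimes}S^\infty}\widehat{\otimes}H^1(\R;S^1)$ is not a norm-convergent series of four-fold elementary tensors, and in any case \cite{AL} proves the scalar analogue by the same $w^*$-topology method, not by a Fourier identity you could quote.

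The paper handles this differently. It first shows ${\rm Ker}(\Lambda)=(\mathcal{R}_{S^1})_\perp$ via the easy identity $\langle\mu,\Lambda(\Phi)\rangle=\langle R_\mu\overline{\otimes}I_{S^1},\Phi\rangle$, so that ${\rm Ran}(\Lambda^*)=\overline{\mathcal{R}_{S^1}}^{w^*}$. It then proves $\mathcal{M}_{S^1}(H^1(\R))\subset\overline{\mathcal{R}_{S^1}}^{w^*}$ by an explicit approximation: for $T=T_m$ one sets $T_{(n)}=R_{\phi_n}\circ T$ with $\widehat{\phi_n}$ compactly supported away from $0$, checks that $T_{(n)}=R_{T(\phi_{n,+})}\in\mathcal{R}$, and shows $T_{(n)}\overline{\otimes}I_{S^1}\to T\overline{\otimes}I_{S^1}$ in the $w^*$-topology. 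This approximation is exactly the missing ingredient in your argument: once $V(h_k)$ is replaced by $T(\phi_{n,+})\ast h_k$, one has $\sum_k f_k\ast\bigl(T(\phi_{n,+})\ast h_k\bigr)=T(\phi_{n,+})\ast\bigl(\sum_k f_k\ast h_k\bigr)=0$, and passing to the limit gives your well-definedness without any Fourier manipulation on the $f_k$.
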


\begin{proof}
Since $\Lambda$ is a quotient map, ${\rm Ran}(\Lambda^*)={\rm Ker}(\Lambda)^\perp$
is $w^*$-closed.

Recall 
${\mathcal R}$ given by (\ref{Trivial}). Any element of ${\mathcal R}$ is $S^1$-bounded and we may therefore consider 
$$
{\mathcal R}_{S^1}=\,\bigl\{R_\mu\overline{\otimes} I_{S^1}\, :\, \mu\in M(\R)\bigr\}\,\subset 
B(H^1(\R;S^1)).
$$
Arguing as in \cite[Remark 3.8]{AL}, we see that for any $\mu\in M(\R)$ and any
$\Phi\in \frac{C_0(\R;S^\infty)}{Z_0\overset{\vee}{\otimes} S^\infty}\, 
\widehat{\otimes}\, H^1(\R;S^1)$, we have
\begin{equation}\label{DualDual}
\langle\mu,\Lambda(\Phi)\rangle\,=\, \bigl\langle R_\mu\overline{\otimes} I_{S^1}, \Phi\bigr\rangle,
\end{equation}
where the duality pairing in the left-hand side is given by (\ref{Riesz}) whereas 
the duality pairing in the right-hand side is given by (\ref{BH1-dual}).
This implies that 
$$
{\rm Ker}(\Lambda) =  ({\mathcal R}_{S^1})_\perp. 
$$
Consequently,
\begin{equation}\label{Interm}
{\rm Ran}(\Lambda^*) =\overline{{\mathcal R}_{S^1}}^{w^*}.
\end{equation}

Let us show that ${\mathcal M}_{S^1}(H^1(\R))$ is a $w^*$-closed 
subspace of $B(H^1(\R;S^1))$. By the Krein-Smulian theorem, it suffices to show that 
the unit ball of ${\mathcal M}_{S^1}(H^1(\R))$ is $w^*$-closed. To prove this,
let $(T_i)_i$ be a net of $S^1$-bounded Fourier multipliers on $H^1(\R)$ converging
to some $W\in B(H^1(\R;S^1))$ in the $w^*$-topology of $B(H^1(\R;S^1))$, and assume that 
$\norm{T_i\overline{\otimes} I_{S^1}}\leq 1$ for all $i$. 
This implies that $\norm{T_i}\leq 1$ for all $i$. Hence passing 
to a subnet, we may assume that  $(T_i)_i$ converges to some 
$T\in B(H^1(\R))$ in  the $w^*$-topology of $B(H^1(\R))$.
By \cite[Lemma 3.5]{AL}, $T$ is a bounded Fourier multiplier on $H^1(\R)$.
We observe that for any $h\in H^1(\R)$, $f\in C_0(\R)$, $a\in S^\infty$
and $b\in S^1$, we have 
$$
\bigl\langle   f\otimes a,  (T\otimes I_{S^1})(h\otimes b)\bigr\rangle
= \bigl\langle f\otimes a, W(h\otimes b)\bigr\rangle.
$$
Indeed both sides of this equality are equal to the limit of
$\langle f\otimes a, T_i(h)\otimes b\rangle$ when $i\to\infty$.
This implies that $T\otimes I_{S^1}$ and $W$ coincide on 
$H^1(\R)\otimes S^1$. Consequently,
$T$ is $S^1$-bounded and $T\overline{\otimes} I_{S^1}=W$.
Thus $W$ belongs to ${\mathcal M}_{S^1}(H^1(\R))$, which proves the $w^*$-closedness.

We have an inclusion ${\mathcal R}_{S^1}\subset {\mathcal M}_{S^1}(H^1(\R))$ by construction.
Hence to obtain the result that ${\rm Ran}(\Lambda^*)={\mathcal M}_{S^1}(H^1(\R))$,
it now suffices, by (\ref{Interm}), to show that
\begin{equation}\label{Haase}
{\mathcal M}_{S^1}(H^1(\R))\subset \overline{{\mathcal R}_{S^1}}^{w^*}.
\end{equation}
The remainder of the proof is devoted to this inclusion.

We let $\chi_+$ denote the characteristic function of $(0,\infty)$.
Let $\phi\in L^1(\R)$ such that $\widehat{\phi}\colon\R\to\C\,$ is a $C^\infty$-function with
compact support and $\widehat{\phi}(u)=1$
for all $u\in[-1,1]$. For any $n\geq 1$, define $\phi_n\in L^1(\R)$ by
$$
\phi_n(t) = n\phi(nt) - \,\frac{1}{n}\phi\Bigl(\frac{t}{n}\Bigr),\qquad t\in\R.
$$
Then $\widehat{\phi_n}(u) = \widehat{\phi}\bigl(\frac{u}{n}\bigr) -  \widehat{\phi}(nu)$
for all $u\in\R$, hence $\widehat{\phi_n}(u)=0$
for all $u\in\bigl[-\frac{1}{n},\frac{1}{n}\bigr]$. 
Consequently, the product $\chi_+\widehat{\phi_n}$
is a $C^\infty$-function with
compact support included in $\R_+^*$.
Hence there exists $\phi_{n,+}\in H^1(\R)$ such that 
$$
\widehat{\phi_{n,+}}\,  =\, \chi_+\widehat{\phi_n}.
$$
This construction is inspired by \cite[Section E.5]{Haase1}. It follows from 
\cite[Corollary E.5.3]{Haase1} that for any $h\in H^1(\R)$,
$\norm{\phi_n\ast h -h}_1\to 0$ when $n\to\infty$. 

Let $T\in {\mathcal M}_{S^1}(H^1(\R))$, with symbol $m$. For any 
$n\geq 1$, let $T_{(n)} = R_{\phi_n}\circ T$. We have 
$\norm{\phi_n}_1\leq 2\norm{\phi}_1$ hence 
$\norm{R_{\phi_n}\overline{\otimes} I_{S^1}}\leq 2\norm{\phi}_1$,
by (\ref{Z}). This yields
\begin{equation}\label{Unif}
\bignorm{T_{(n)}\overline{\otimes} I_{S^1}}\leq 2
\norm{\phi}_1 \bignorm{T\overline{\otimes} I_{S^1}},\qquad n\geq 1.
\end{equation}
For any $h\in H^1(\R)$ we have
$T_{(n)}(h) = \phi_{n}\ast T(h)$ hence
$$
\widehat{T_{(n)}(h)} = \widehat{\phi_n} \widehat{T(h)} =  \widehat{\phi_{n,+}}
m\widehat{h} = \widehat{T(\phi_{n,+})}\widehat{h}.
$$
This shows that $T_{(n)}$ belongs to  ${\mathcal R}$, with
$$
T_{(n)} = R_{T(\phi_{n,+})}, \qquad n\geq 1.
$$
We noticed that $T_{(n)}(h) \to T(h)$ in $H^1(\R)$ for any $h\in H^1(\R)$.
By linearity, this implies that   $(T_{(n)}\otimes I_{S^1})(h) \to 
(T\otimes I_{S^1})(h)$ in $H^1(\R;S^1)$ for any $h\in H^1(\R)\otimes S^1$.
According to the uniform estimate (\ref{Unif}), this implies that 
$(T_{(n)}\overline{\otimes} I_{S^1})(h) \to 
(T\overline{\otimes}  I_{S^1})(h)$ in $H^1(\R;S^1)$ for any $h\in H^1(\R;S^1)$.
A fortiori, we have $\bigl\langle (T_{(n)}\overline{\otimes} I_{S^1})(h),\dot{f}\bigr\rangle \to 
\bigl\langle (T\overline{\otimes} I_{S^1})(h),\dot{f}\bigr\rangle$ for any $h\in H^1(\R;S^1)$
and any $f\in C_0(\R,S^\infty)$. By linearity, this implies that through 
the duality pairing in (\ref{BH1-dual}), we have
$$
\bigl\langle \Phi, T_{(n)}\overline{\otimes} I_{S^1}\bigr\rangle \longrightarrow 
\bigl\langle\Phi,  T\overline{\otimes} I_{S^1} \bigr\rangle,
$$
for any $\Phi\in \frac{C_0(\R;S^\infty)}{Z_0\overset{\vee}{\otimes} S^\infty}\, 
\otimes\, H^1(\R;S^1)$.
Using (\ref{Unif}) again, we deduce that $T_{(n)}\overline{\otimes} I_{S^1}
\to T\overline{\otimes} I_{S^1}$ in the 
$w^*$-topology of $B(H^1(\R;S^1))$. Thus $T\in \overline{{\mathcal R}_{S^1}}^{w^*}$,
which proves (\ref{Haase}).
\end{proof}

Proposition \ref{Range} provides an isometric identification
\begin{equation}\label{AOS1-dual}
\A_{0,S^1}(\R)^*\,\simeq\, {\mathcal M}_{S^1}(H^1(\R))
\end{equation}
that will be used in Corollary \ref{QuotMap} below. A simple adaptation of the 
proof of Proposition \ref{Range} shows an analogous 
 isometric identification
\begin{equation}\label{AO-dual}
\A_{0}(\R)^*\,\simeq\, {\mathcal M}(H^1(\R)).
\end{equation}
More precisely, the first two authors showed in \cite[Theorem 3.7 and 
Remark 3.8]{AL}
that $\A_{0}(\R)^*\subset B(H^1(\R))$ is equal to the
$w^*$-closure of ${\mathcal R}$ in $B(H^1(\R))$, denoted by 
${\mathcal P\mathcal M}$. Then it follows from the proof of Proposition \ref{Range}
that ${\mathcal P\mathcal M}= {\mathcal \mathcal M}(H^1(\R))$.

\begin{rq1}\label{Diff} We noticed in Remark \ref{Not-cb} that
${\mathcal M}_{S^1}(H^1(\R))\not={\mathcal M} (H^1(\R))$. By the duality relations
(\ref{AO-dual}), (\ref{AOS1-dual}), this implies that the inequality
(\ref{A0A0S1}) cannot be reversed. More precisely, there is no
$\delta>0$ such that $\delta\norm{F}_{\A_{0}}\leq 
\norm{F}_{\A_{0,S^1}}$ for all $F\in\A_0(\R)$. Equivalently,
$$
\A_{0}(\R)\subsetneq \A_{0, S^1}(\R).
$$
\end{rq1}

The last part of this section is devoted to the realization of $\A_{0,S^1}(\R)$
as the range of a natural quotient map, completely different from $\Lambda$. 
Let 
$$
\mathcal{D}=\,\bigl\{\widehat{b}(-\,\cdotp)\, :\, b\in L^1(\R_+)\bigr\}.
$$
According to \cite[Proposition 3.9]{AL}, $\mathcal{D}\subset
\A_0(\R)$ and $\mathcal{D}$ is a dense subalgebra of $\A_0(\R)$.
The argument showing this density property shows as well
the following result.

\begin{lem}\label{Inclusions}
The algebra $\mathcal{D}$ is dense in $\A_{0,S^1}(\R)$. 
\end{lem}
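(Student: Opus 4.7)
My plan is to adapt the proof of \cite[Proposition 3.9]{AL} to the operator-valued setting. Since the generators $f\ast h$ (with $f\in C_0(\R;S^\infty)$ and $h\in H^1(\R;S^1)$) have dense linear span in $\A_{0,S^1}(\R)$ by construction, it suffices to approximate each such $f\ast h$ by elements of $\mathcal{D}$ in the $\A_{0,S^1}$-norm. The strategy is to truncate $h$ in frequency and $f$ in space.

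First, I would use the scalar functions $\phi_n,\phi_{n,+}$ from the proof of Proposition \ref{Range}, where $\widehat{\phi_{n,+}}=\chi_{\R_+}\widehat{\phi_n}$ is smooth and compactly supported in $\R_+^\ast$, and $\|\phi_n\ast k-k\|_1\to 0$ for every $k\in H^1(\R)$. Setting $h_n:=\phi_{n,+}\ast h$, the identity $\phi_{n,+}\ast h=\phi_n\ast h$ (which holds because $\widehat{h}$ vanishes on $\R_-$) together with density of $H^1(\R)\otimes S^1$ in $H^1(\R;S^1)$ and the uniform bound $\|\phi_n\|_1\leq 2\|\phi\|_1$ extends the scalar convergence to $h_n\to h$ in $H^1(\R;S^1)$. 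Moreover $\widehat{h_n}=\widehat{\phi_{n,+}}\widehat{h}$ is $S^1$-valued, continuous, bounded and compactly supported in $\R_+^\ast$, so $\widehat{h_n}\in L^1(\R_+;S^1)$. Second, I would choose a scalar cutoff $\eta_R\in C_c(\R)$ equal to $1$ on $[-R,R]$ and set $f_R:=\eta_R f\in C_c(\R;S^\infty)\subset L^1(\R;S^\infty)$; then $\|f-f_R\|_\infty\to 0$ since $f\in C_0(\R;S^\infty)$.

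With both $f_R$ integrable and $\widehat{h_n}$ integrable with compact support in $\R_+^\ast$, a Fubini computation based on $h_n(u)=\tfrac{1}{2\pi}\int e^{iu\xi}\widehat{h_n}(\xi)\,d\xi$ gives
\begin{equation*}
(f_R\ast h_n)(s)\,=\,\frac{1}{2\pi}\int_0^\infty e^{is\xi}\,tr\bigl(\widehat{f_R}(\xi)\widehat{h_n}(\xi)\bigr)\,d\xi.
\end{equation*}
The function $b_{R,n}(\xi):=\tfrac{1}{2\pi}tr(\widehat{f_R}(\xi)\widehat{h_n}(\xi))$ lies in $L^1(\R_+)$ because $\widehat{f_R}$ is bounded and $\|\widehat{h_n}(\,\cdotp\,)\|_{S^1}$ is integrable, so $f_R\ast h_n=\widehat{b_{R,n}}(-\,\cdotp\,)\in\mathcal{D}$. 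Then the elementary estimate $\|g\ast k\|_{\A_{0,S^1}}\leq\|g\|_\infty\|k\|_1$, immediate from the definition of the $\A_{0,S^1}$-norm, gives $\|f\ast h-f_R\ast h_n\|_{\A_{0,S^1}}\leq\|f-f_R\|_\infty\|h_n\|_1+\|f\|_\infty\|h-h_n\|_1\to 0$ as $R,n\to\infty$.

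The main technical obstacle is the Fubini interchange: since $f\in C_0(\R;S^\infty)$ need not be integrable, the Fourier-transform manipulation cannot be carried out directly on $f$, and the auxiliary truncation $f_R$ is essential for rigor.
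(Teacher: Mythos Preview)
Your proposal is correct and follows essentially the same route as the paper, which simply states that the argument of \cite[Proposition 3.9]{AL} carries over unchanged; you have faithfully written out that adaptation to the $S^\infty/S^1$-valued setting, including the frequency truncation of $h$ via $\phi_{n,+}$ and the spatial cutoff of $f$ needed to justify the Fubini step.
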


The following is a consequence of \cite[Remark 3.10]{AL}.

\begin{lem}\label{Action}
Let $m$ be the symbol of an $S^1$-bounded Fourier multiplier
$T_m\colon H^1(\R)\to H^1(\R)$. Then for any $b\in L^1(\R_+)$,
$$
\bigl\langle T_m, \widehat{b}(-\,\cdotp)\bigr\rangle_{{\mathcal M}_{S^1}(H^1(\R)),
\A_{0,S^1}(\R)}\,=\,\int_{0}^{\infty} m(t)b(t)\, dt.
$$
\end{lem}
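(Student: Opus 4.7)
The plan is to reduce the statement to the analogous scalar formula \cite[Remark 3.10]{AL}, which asserts that for any bounded Fourier multiplier $T_m\in\mathcal{M}(H^1(\R))$ and any $b\in L^1(\R_+)$,
$$
\bigl\langle T_m,\widehat{b}(-\,\cdotp)\bigr\rangle_{\mathcal{M}(H^1(\R)),\,\A_0(\R)}
\,=\,\int_0^\infty m(t)b(t)\, dt.
$$
Since $\widehat{b}(-\,\cdotp)\in\mathcal{D}\subset\A_0(\R)\subset\A_{0,S^1}(\R)$, the task reduces to showing that the two pairings of $T_m$ against $\widehat{b}(-\,\cdotp)$ coincide. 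Equivalently, letting $\iota\colon\A_0(\R)\hookrightarrow\A_{0,S^1}(\R)$ denote the contractive inclusion, I have to prove that under the identifications \eqref{AOS1-dual} and \eqref{AO-dual}, the adjoint $\iota^*\colon\A_{0,S^1}(\R)^*\to\A_0(\R)^*$ corresponds to the natural inclusion $\mathcal{M}_{S^1}(H^1(\R))\hookrightarrow\mathcal{M}(H^1(\R))$.

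To this end, I would fix $p\in S^\infty$ and $q\in S^1$ with $\norm{p}=\norm{q}=1$ and $tr(pq)=1$ (e.g., a rank-one projection), and introduce the contractive embedding
$$
J\colon\frac{C_0(\R)}{Z_0}\,\widehat\otimes\,H^1(\R)\longrightarrow
\frac{C_0(\R;S^\infty)}{Z_0\overset{\vee}{\otimes}S^\infty}\,\widehat\otimes\,H^1(\R;S^1)
$$
defined by $J(\dot f\otimes h)=\overline{f\otimes p}\otimes(h\otimes q)$ on elementary tensors. Writing $\Lambda_{\rm sc}$ for the scalar analog of $\Lambda$ (whose existence and properties underlie \eqref{AO-dual}), a direct computation using $(f\otimes p)\ast(h\otimes q)=tr(pq)(f\ast h)=f\ast h$ gives the identity $\Lambda\circ J=\iota\circ\Lambda_{\rm sc}$. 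Taking adjoints yields $J^*\circ\Lambda^*=\Lambda_{\rm sc}^*\circ\iota^*$.

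Next I would compute $J^*$ on an element $T_m\overline\otimes I_{S^1}=\Lambda^*(T_m)$. For $f\in C_0(\R)$, $h\in H^1(\R)$, using the pairing \eqref{Meziani2} together with $(T_m\overline\otimes I_{S^1})(h\otimes q)=T_m(h)\otimes q$, one gets
$$
\bigl\langle J^*(T_m\overline\otimes I_{S^1}),\,\dot f\otimes h\bigr\rangle
\,=\,\bigl\langle T_m(h)\otimes q,\,\overline{f\otimes p}\bigr\rangle
\,=\,tr(pq)\int_{-\infty}^\infty f(-t)T_m(h)(t)\, dt
\,=\,\int_{-\infty}^\infty f(-t)T_m(h)(t)\, dt.
$$
But the latter is precisely $\bigl\langle\Lambda_{\rm sc}^*(T_m),\,\dot f\otimes h\bigr\rangle$. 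Hence $J^*(T_m\overline\otimes I_{S^1})=\Lambda_{\rm sc}^*(T_m)$, and since $\Lambda_{\rm sc}^*$ is an isometric embedding realizing the identification $\A_0(\R)^*\simeq\mathcal{M}(H^1(\R))$, the relation $J^*\circ\Lambda^*=\Lambda_{\rm sc}^*\circ\iota^*$ forces $\iota^*(T_m)=T_m$ in $\mathcal{M}(H^1(\R))$.

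Combining this with \cite[Remark 3.10]{AL} and $\widehat{b}(-\,\cdotp)\in\A_0(\R)$, one obtains
$$
\bigl\langle T_m,\widehat{b}(-\,\cdotp)\bigr\rangle_{\mathcal{M}_{S^1}(H^1(\R)),\A_{0,S^1}(\R)}
=\bigl\langle\iota^*(T_m),\widehat{b}(-\,\cdotp)\bigr\rangle_{\mathcal{M}(H^1(\R)),\A_0(\R)}
=\int_0^\infty m(t)b(t)\, dt,
$$
which is the desired identity. The only delicate step is the bookkeeping with the duality pairing \eqref{Meziani2} in the vector-valued setting and the verification that $\Lambda\circ J=\iota\circ\Lambda_{\rm sc}$; everything else is a routine consequence of the definitions.
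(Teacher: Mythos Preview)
Your argument is correct, and it is essentially the content of the paper's one-line proof, which simply reads ``The following is a consequence of \cite[Remark 3.10]{AL}.'' You have supplied the details the authors leave implicit: the scalar duality formula from \cite[Remark 3.10]{AL} already gives $\langle T_m,\widehat{b}(-\,\cdotp)\rangle_{\mathcal{M}(H^1(\R)),\A_0(\R)}=\int_0^\infty m(t)b(t)\,dt$, and since $\widehat{b}(-\,\cdotp)\in\A_0(\R)$, the only point to verify is that the pairing on $\A_{0,S^1}(\R)$ restricts to the pairing on $\A_0(\R)$ under the inclusion $\iota$, i.e.\ that $\iota^*$ sends $T_m\in\mathcal{M}_{S^1}(H^1(\R))$ to the same $T_m\in\mathcal{M}(H^1(\R))$. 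Your construction of $J$ via a rank-one tensor and the identity $\Lambda\circ J=\iota\circ\Lambda_{\rm sc}$ is a clean way to see this; the check that $J$ is well-defined on the quotient and contractive on the projective tensor product is routine, and your computation of $J^*(T_m\overline{\otimes}I_{S^1})$ is correct.
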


In the sequel, we use the fact that the convolution
$c\ast d$ of any two elements
$c,d\in L^1(\R_+)$ belongs to $L^1(\R_+)$.

\begin{cor}\label{QuotMap}
Let $\Theta\colon L^1(\R_+)\otimes L^1(\R_+)\to \mathcal{D}$ be the unique
linear map such that
$$
\Theta(c\otimes d) = \,\widehat{c\ast d}(-\,\cdotp),\qquad
c,d\in L^1(\R_+).
$$
Then $\Theta$ uniquely extends to a contractive map (still denoted by)
$$
\Theta\colon  L^1(\R_+)\otimes_{\gamma_2^*} L^1(\R_+)\longrightarrow
\A_{0,S^1}(\R).
$$
Moreover $\Theta$ is a quotient map (in the sense of Definition \ref{Quotient}).
\end{cor}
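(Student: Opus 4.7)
The natural strategy is to pass to adjoints and deduce the result from the isometric embedding ${\mathcal M}_{S^1}(H^1(\R))\hookrightarrow\Gamma_2(L^1(\R_+),L^\infty(\R_+))$ produced by Theorem \ref{Main} together with Lemma \ref{V2}. Using the identifications $\A_{0,S^1}(\R)^*\simeq {\mathcal M}_{S^1}(H^1(\R))$ from \eqref{AOS1-dual} and $(L^1(\R_+)\otimes_{\gamma_2^*}L^1(\R_+))^*\simeq \Gamma_2(L^1(\R_+),L^\infty(\R_+))$ from \eqref{gamma2*}, both conclusions drop out once $\Theta^*$ is shown to coincide with this embedding.

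The first step is a computation of the pairing. For $c,d\in L^1(\R_+)$ and $m$ the symbol of any $T_m\in {\mathcal M}_{S^1}(H^1(\R))$, Lemma \ref{Action} yields $\langle T_m,\Theta(c\otimes d)\rangle=\int_0^\infty m(t)(c\ast d)(t)\,dt$. Fubini's theorem and the substitution $r=t-s$ then rewrite this as
$$
\langle T_m,\Theta(c\otimes d)\rangle\,=\,\int_0^\infty\!\!\int_0^\infty m(r+s)\,c(r)\,d(s)\,dr\,ds\,=\,\xi_{S_\varphi}(c\otimes d),
$$
where $\varphi(r,s)=m(r+s)$ and $S_\varphi\colon L^1(\R_+)\to L^\infty(\R_+)$ is the kernel operator of \eqref{Kernel1}--\eqref{Kernel2}. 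Reading this equality through \eqref{AOS1-dual} and \eqref{gamma2*} identifies the adjoint $\Theta^*$ on the algebraic tensor product with the map $T_m\mapsto S_\varphi$.

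Next, I would combine Theorem \ref{Main}, which gives $\varphi\in\mathcal{V}_2(\R_+^2)$ with $\nu_2(\varphi)=\bignorm{T_m\overline{\otimes}I_{S^1}}$, with Lemma \ref{V2}(1), which gives $\gamma_2(S_\varphi)=\nu_2(\varphi)$. Together these yield $\norm{\Theta^*(T_m)}_{\Gamma_2}=\norm{T_m}_{{\mathcal M}_{S^1}(H^1(\R))}$, so $\Theta^*$ is isometric. In particular $\Theta$ has norm at most one on $L^1(\R_+)\otimes L^1(\R_+)$, since
$$
\norm{\Theta(u)}_{\A_{0,S^1}}\,=\,\sup_{\norm{T_m}\leq 1}\bigl|\langle\Theta^*(T_m),u\rangle\bigr|\,\leq\,\norm{u}_{\gamma_2^*},
$$
and hence extends uniquely to a contraction on the completion $L^1(\R_+)\otimes_{\gamma_2^*}L^1(\R_+)$. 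Since the extended $\Theta^*$ is still the isometry above, the quotient-map property follows from the standard criterion recalled after Definition \ref{Quotient}.

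The only real content lies in the identification $\Theta^*(T_m)=S_\varphi$ with $\varphi(r,s)=m(r+s)$; once that is in hand, Theorem \ref{Main} and Lemma \ref{V2} do all the work. The mild risk point is the bookkeeping of the two tensor factors in \eqref{Proj} and the direction of the change of variables in the Fubini step; everything else is formal duality.
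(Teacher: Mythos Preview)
Your proposal is correct and follows essentially the same route as the paper: compute the pairing $\langle T_m,\Theta(c\otimes d)\rangle$ via Lemma \ref{Action} and Fubini, identify it with $\langle S_{\overset{\circ}{m}},c\otimes d\rangle$ where $\overset{\circ}{m}(r,s)=m(r+s)$, and then invoke Theorem \ref{Main} and Lemma \ref{V2} to see that $T_m\mapsto S_{\overset{\circ}{m}}$ is an isometry, whence $\Theta$ is a contraction with isometric adjoint. The only cosmetic difference is that the paper first establishes contractivity by a Hahn-Banach norming argument and only afterwards names the adjoint, whereas you speak of $\Theta^*$ before boundedness is secured; this is harmless since your displayed inequality $\norm{\Theta(u)}_{\A_{0,S^1}}=\sup_{\norm{T_m}\leq 1}\vert\langle S_{\overset{\circ}{m}},u\rangle\vert\leq\gamma_2^*(u)$ is exactly that Hahn-Banach step in disguise.
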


\begin{proof}
Let $\Psi\in L^1(\R_+)\otimes L^1(\R_+)$.
By the Hahn-Banach Theorem and (\ref{AOS1-dual}), there 
exists $T=T_m\in {\mathcal M}_{S^1}(H^1(\R))$ 
such that $\norm{T_m\overline{\otimes} I_{S^1}}=1$ and
$$
\norm{\Theta(\Psi)}_{\A_{0,S^1}} = 
\bigl\langle T_m, \Theta(\Psi)
\bigr\rangle_{{\mathcal M}_{S^1}(H^1(\R)),
\A_{0,S^1}(\R)}.
$$
Let us write $\Psi$ as
$$
\Psi = \sum_j c_j\otimes d_j
$$
for some finite families $(c_j)_j$ and $(d_j)_j$ in $L^1(\R_+)$, and
let 
$$
b=\sum_j c_j\ast d_j.
$$
Then by Lemma \ref{Action}, we have
$$
\norm{\Theta(\Psi)}_{\A_{0,S^1}} = \int_{0}^\infty m(t) b(t)\, dt.
$$
For any fixed $j$, we have
\begin{align*}
\int_{0}^\infty m(t) (c_j\ast d_j)(t) \, dt \,
& =\int_{0}^\infty m(t) \int_{0}^t c_j(s)d_j(t-s)\, ds\,dt\\
& =\int_{0}^\infty c_j(s)\int_{s}^\infty m(t) d_j(t-s)\, dt\,ds\\
& =\int_{0}^\infty \int_{0}^\infty m(s+u) c_j(s)d_j(u)\, du\,ds.
\end{align*}
Summing up over $j$, we deduce that
$$
\norm{\Theta(\Psi)}_{\A_{0,S^1}} = 
\,\sum_j \int_{0}^\infty \int_{0}^\infty m(s+u) c_j(s)d_j(u)\, du\,ds\,.
$$
Let $\overset{\circ}{m}\in L^\infty(\R_+^2)$ be defined
by $\overset{\circ}{m}(s,u) = m(s+u)$ and recall the definition of $ S_{\overset{\circ}{m}}$
given by (\ref{Kernel1}) and (\ref{Kernel2}).
Then we can rewrite the above equality as
$$
\norm{\Theta(\Psi)}_{\A_{0,S^1}} = 
\,\sum_j \bigl\langle S_{\overset{\circ}{m}}(d_j),c_j\bigr\rangle_{L^{\infty}(\R_+),
L^{1}(\R_+)}.
$$
By Theorem \ref{Main} and 
Lemma \ref{V2}, $S_{\overset{\circ}{m}}$ belongs to $\Gamma_2\bigl(L^{1}(\R_+),
L^{\infty}(\R_+)\bigr)$ with $\gamma_2(S_{\overset{\circ}{m}})\leq 1$. 
Hence
$$
\Bigl\vert \sum_j \bigl\langle S_{\overset{\circ}{m}}(d_j),
c_j\bigr\rangle_{L^{\infty}(\R_+),
L^{1}(\R_+)}\Bigr\vert\,\leq
\gamma_2^*\Bigl(\sum_j c_j\otimes d_j\Bigr).
$$
Hence we have established that
$$
\norm{\Theta(\Psi)}_{\A_{0,S^1}}\leq \gamma_2^*(\Psi).
$$
This proves that $\Theta$ extends
to a contraction $L^1(\R_+)\otimes_{\gamma_2^*} L^1(\R_+)\to
\A_{0,S^1}(\R)$. The uniqueness is clear.

Applying (\ref{AOS1-dual}) and (\ref{gamma2*}), we can regard the adjoint
of $\Theta$ as a map
$$
\Theta^*\colon 
{\mathcal M}_{S^1}(H^1(\R))\longrightarrow 
\Gamma_2\bigl(L^{1}(\R_+),
L^{\infty}(\R_+)\bigr).
$$
Furthermore, it is  easy to derive from the 
above calculation that
$\Theta^*(T_m) = S_{\overset{\circ}{m}}$
for any $T_m$ in ${\mathcal M}_{S^1}(H^1(\R))$.
By Theorem \ref{Main}, the map $\Theta^*$
is therefore an isometry. Consequently,
$\Theta$ is a quotient map.
\end{proof}

\section{Functional calculus estimates for bounded  $C_0$-semigroups on Hilbert space.}
\label{FC}

We assume that the reader is familiar with the basics of semigroup 
theory, for which we refer e.g. to \cite{Gol} or \cite{Paz}.

Throughout this section, we fix a Hilbert space $H$, 
a bounded $C_0$-semigroup $(T_t)_{t\geq 0}$
on $H$ and we let $-A$ be its infinitesimal generator.

For any $b\in L^1(\R_+)$, we let $L_b\colon\C_+\to\C$ be the Laplace transform 
of $b$ defined by
\begin{equation}\label{Lb}
L_b(z)=\,\int_{0}^{\infty} e^{-tz} b(t)\, dt,\qquad z\in\C_+.
\end{equation}
Following \cite{AL}, we define an operator
$\Gamma(A,b)\in B(H)$ by 
$$
[\Gamma(A,b)](x) = \int_0^\infty b(t) T_t(x)\, dt,\qquad x\in H.
$$
For simplicity, this operator will also be denoted by $\int_0^\infty b(t) T_t\, dt\,$ in the sequel.
The mapping $b\mapsto \Gamma(A,b)$ is the so-called Hille-Phillips functional
calculus (see e.g. \cite[Section 3.3]{Haase1}).
The operator $\Gamma(A,b)$ can be formally regarded as $L_b(A)$. The aim of
this section is to devise a sharp functional calculus on a
half-plane version 
$\A_{0,S^1}(\C_+)$ of $\A_{0,S^1}(\R)$
containing the $L_b$ and mapping 
$L_b$ to $\Gamma(A,b)$ for all $b\in L^1(\R_+)$.

We follow the pattern of \cite[Subsection 3.4]{AL} to define $\A_{0,S^1}(\C_+)$.
For any $F\in H^\infty(\R)$, we let
$$
\widetilde{F}\colon \C_+\longrightarrow\C
$$
be the bounded holomorphic function defined by 
setting
$$
\widetilde{F}(z) = \phi_F(iz),\qquad z\in\C_+,
$$
where $\phi_F\colon P_+\to\C$ is the Poisson integral
of $F$ (see also Subsection \ref{Hardy}).
We set
$$
\A_{0,S^1}(\C_+) = \bigl\{\widetilde{F}\, :\, F\in\A_{0,S^1}(\R)\bigr\}
$$
that we equip with the norm given by the rule
$$
\norm{\widetilde{F}}_{\A_{0,S^1}(\C_+)}
\,=\,
\norm{F}_{\A_{0,S^1}(\R)}.
$$
The mapping $F\mapsto \widetilde{F}$ is multiplicative
hence by
Proposition \ref{BanAlg}, $\A_{0,S^1}(\C_+)$ is 
a Banach algebra for pointwise multiplication.

If $F=\widehat{b}(-\,\cdotp)\colon\R\to\C$ for some 
$b\in L^1(\R_+)$, then $\widetilde{F}$ coincides with
$L_b$. As a consequence of Lemma \ref{Inclusions}, we
therefore obtain the following.

\begin{lem}\label{Inclusions-Tilde}
The space $\bigl\{L_b\, :\, b\in L^1(\R_+)\}$ is a 
dense subalgebra of $\A_{0,S^1}(\C_+)$.
\end{lem}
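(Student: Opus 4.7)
The plan is essentially to transfer Lemma \ref{Inclusions} across the isometry $F \mapsto \widetilde{F}$. By the very definition of the norm on $\A_{0,S^1}(\C_+)$, namely $\|\widetilde{F}\|_{\A_{0,S^1}(\C_+)} = \|F\|_{\A_{0,S^1}(\R)}$, the map $F \mapsto \widetilde{F}$ is an isometric linear isomorphism from $\A_{0,S^1}(\R)$ onto $\A_{0,S^1}(\C_+)$. Since $F \mapsto \widetilde{F}$ is also multiplicative (as recalled in the definition of $\A_{0,S^1}(\C_+)$), it is an isometric algebra isomorphism.

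Next, I would recall the identification $\widetilde{\widehat{b}(-\,\cdotp)} = L_b$ for $b \in L^1(\R_+)$, which was noted just before the statement of the lemma; this shows that the image of $\mathcal{D} = \{\widehat{b}(-\,\cdotp) : b \in L^1(\R_+)\}$ under $\sim$ is exactly $\{L_b : b \in L^1(\R_+)\}$. By Lemma \ref{Inclusions}, $\mathcal{D}$ is a dense subalgebra of $\A_{0,S^1}(\R)$. Applying the isometric isomorphism yields that $\{L_b : b \in L^1(\R_+)\}$ is dense in $\A_{0,S^1}(\C_+)$.

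Finally, to confirm the subalgebra property directly, I would note that for $b_1, b_2 \in L^1(\R_+)$ we have $L_{b_1} L_{b_2} = L_{b_1 \ast b_2}$, and $b_1 \ast b_2 \in L^1(\R_+)$. There is no real obstacle here: the entire statement follows by transport of structure from Lemma \ref{Inclusions} via the isometric algebra isomorphism $F \mapsto \widetilde{F}$.
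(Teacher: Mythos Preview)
Your proposal is correct and takes essentially the same approach as the paper: the paper simply states that the lemma follows from Lemma~\ref{Inclusions} via the identification $\widetilde{\widehat{b}(-\,\cdotp)}=L_b$, which is exactly the transport-of-structure argument you wrote out in detail.
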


We set 
$$
C_A :=\,\sup\bigl\{\norm{T_t}\, :\, t\geq 0\bigr\}.
$$ 
The following is the main result of this section. 

\begin{thm}\label{FCA0S1}
There exists a unique bounded homomorphism $\rho_{0,S^1}^{A}\colon
\A_{0,S^1}(\C_+)\to B(H)$ such that
\begin{equation}\label{FCA0S1-2}
\rho_{0,S^1}^{A}(L_b)=\int_0^\infty b(t)T_t\,dt
\end{equation}
for all $b\in L^1(\R_+)$. Moreover $\norm{\rho_{0,S^1}^{A}}\leq C_A^2$.
\end{thm}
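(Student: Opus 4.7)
The plan is to reduce the problem to establishing the single estimate
\[
\bignorm{\Gamma(A,b)}_{B(H)} \leq C_A^2 \bignorm{L_b}_{\A_{0,S^1}(\C_+)}, \qquad b\in L^1(\R_+),
\]
and then invoke density. Indeed, by Lemma \ref{Inclusions-Tilde}, $\{L_b : b\in L^1(\R_+)\}$ is a dense subalgebra of $\A_{0,S^1}(\C_+)$, so the map $L_b\mapsto\Gamma(A,b)$ will extend by bounded linearity to a unique bounded operator $\rho_{0,S^1}^A$ of norm $\leq C_A^2$. The homomorphism property on the extension follows automatically from the Hille--Phillips identity $\Gamma(A,b_1 * b_2)=\Gamma(A,b_1)\Gamma(A,b_2)$, the identity $L_{b_1}L_{b_2}=L_{b_1 * b_2}$, joint continuity of multiplication in the Banach algebra $\A_{0,S^1}(\C_+)$, and continuity of $\rho_{0,S^1}^A$.

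To prove the estimate, I would fix $x,y\in H$ and consider the continuous bounded function $\phi_{x,y}(u) = \langle T_u x, y\rangle_H$, $u>0$. The semigroup law provides an explicit Hilbert-space factorization
\[
\phi_{x,y}(s+t) \,=\,\langle T_{s+t}x, y\rangle_H \,=\,\langle T_t x, T_s^* y\rangle_H \,=\,\langle \alpha(t),\beta(s)\rangle_H, \qquad s,t\geq 0,
\]
with $\alpha(t):=T_t x$ and $\beta(s):=T_s^* y$. Strong continuity and boundedness of $(T_t)_{t\geq 0}$ (and its adjoint) ensure that $\alpha,\beta\in L^{\infty}(\R_+;H)$ with $\norm{\alpha}_\infty\leq C_A\|x\|$ and $\norm{\beta}_\infty\leq C_A\|y\|$. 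Applying the implication $(ii)\Rightarrow(i)$ of Theorem \ref{Main} with $\mathcal H=H$, we conclude that $\phi_{x,y}$ is the symbol of an $S^1$-bounded Fourier multiplier $T_{\phi_{x,y}}$ on $H^1(\R)$ with
\[
\bignorm{T_{\phi_{x,y}}\overline{\otimes}I_{S^1}}\,\leq\, C_A^2\,\|x\|\|y\|.
\]

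Using the isometric duality (\ref{AOS1-dual}), $T_{\phi_{x,y}}$ is then a bounded linear functional on $\A_{0,S^1}(\R)$ of the same norm. Lemma \ref{Action} evaluates it on $\widehat{b}(-\,\cdotp)$ as
\[
\bigl\langle T_{\phi_{x,y}},\,\widehat{b}(-\,\cdotp)\bigr\rangle \,=\,\int_0^\infty \phi_{x,y}(t)\,b(t)\,dt \,=\,\bigl\langle\Gamma(A,b)x,y\bigr\rangle_H,
\]
and since $\bignorm{L_b}_{\A_{0,S^1}(\C_+)}=\bignorm{\widehat{b}(-\,\cdotp)}_{\A_{0,S^1}(\R)}$ by definition, the duality bound gives $|\langle\Gamma(A,b)x,y\rangle_H|\leq C_A^2\|x\|\|y\|\bignorm{L_b}_{\A_{0,S^1}(\C_+)}$. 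Supremum over unit vectors delivers the desired estimate.

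The main work has already been absorbed into Theorem \ref{Main} and into the duality identification of Section \ref{A0}; the remaining argument is a clean transfer through the chain $\mathcal{V}_2(\R_+^2)\leftrightarrow{\mathcal M}_{S^1}(H^1(\R))\leftrightarrow \A_{0,S^1}(\R)^*$. The only conceptual point to be careful about is the matching of conventions: one must check that the factorization $\langle T_t x,T_s^* y\rangle_H$ respects the sesquilinear convention assumed in Theorem \ref{Main} (linear in the first slot, antilinear in the second), and that Lemma \ref{Action} identifies the pairing of $T_{\phi_{x,y}}$ with $\widehat{b}(-\,\cdotp)$ exactly as the Hille--Phillips integral rather than with some sign or conjugate--twisted variant.
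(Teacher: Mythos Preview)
Your proof is correct and takes a genuinely different, more direct route than the paper's.

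Both arguments hinge on the same factorization $\langle T_{s+t}x,y\rangle_H=\langle T_t x,T_s^*y\rangle_H$, which exhibits $(s,t)\mapsto\phi_{x,y}(s+t)$ as an element of $\mathcal V_2(\R_+^2)$ with $\nu_2$-norm at most $C_A^2\|x\|\|y\|$. From there the paths diverge. The paper interprets this via Lemma~\ref{V2} as a $\gamma_2$-bound on the kernel operator $S_{\varphi_{x,y}}$, uses duality $\Gamma_2=(\otimes_{\gamma_2^*})^*$ to obtain a bounded map $\sigma_A\colon L^1(\R_+)\otimes_{\gamma_2^*}L^1(\R_+)\to B(H)$, and then must show that $\sigma_A$ factors through the quotient map $\Theta$ of Corollary~\ref{QuotMap}. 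That last step is the delicate one: proving $\ker(\Theta)\subset\ker(\sigma_A)$ requires the auxiliary semigroups generated by $-(A+\epsilon)$, the compatibility identity (\ref{Compatible}), and the Batty--Haase--Mubeen half-plane calculus. You instead feed the factorization directly into Theorem~\ref{Main}\,(ii)$\Rightarrow$(i) to produce $T_{\phi_{x,y}}\in\mathcal M_{S^1}(H^1(\R))$, invoke the isometric duality (\ref{AOS1-dual}) to view it as a functional on $\A_{0,S^1}(\R)$, and evaluate via Lemma~\ref{Action}. This bypasses the quotient map $\Theta$, the approximation $A+\epsilon$, and the half-plane calculus altogether; density (Lemma~\ref{Inclusions-Tilde}) then finishes the job. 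The paper's route makes the $\Gamma_2$/quotient structure explicit, which has independent interest, but your argument is shorter and uses only results already assembled in Sections~\ref{Multipliers} and~\ref{A0}. One small point you flag correctly: the measurability of $s\mapsto T_s^*y$ is needed for Theorem~\ref{Main}\,(ii), and it holds because on the reflexive space $H$ the adjoint semigroup $(T_t^*)_{t\geq0}$ is itself strongly continuous.
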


According to Remark \ref{Diff}, the above functional 
calculus $\rho_{0,S^1}^{A}\colon
\A_{0,S^1}(\C_+)\to B(H)$ is a non trivial extension
of the functional calculus $\A_{0}(\C_+)\to B(H)$ from
\cite[Theorem 4.4]{AL}.

The proof of Theorem \ref{FCA0S1} will rely on Corollary  
\ref{QuotMap}, hence on the description
of $S^1$-bounded multipliers of $H^1(\R)$ given in Section \ref{Multipliers}.
As far as we know, the first result connecting multipliers on Hardy spaces
with functional calculus estimates is due to Peller 
\cite{Pel}.

We will appeal to the half-plane holomorphic functional
calculus introduced by Batty-Haase-Mubeen in \cite{BHM}. 
Given any $\psi\in H^\infty(\C_+)$, and any $\epsilon>0$,
it allows the definition 
of a closed and densely defined operator $\psi(A+\epsilon)$
on $H$. We refer to \cite{BHM} 
for the construction and basic properties of this functional
calculus (see also \cite[Subsection 4.1]{AL}). We will use the fact, 
established in 
\cite[Lemma 4.2]{AL}, that for any $b\in L^1(\R_+)$, the operator
$L_b(A+\epsilon)$ provided by the half-plane holomorphic function 
calculus satisfies
\begin{equation}\label{Compatible}
L_b(A+\varepsilon) = \Gamma(A+\varepsilon,b).
\end{equation}

\begin{proof}[Proof of Theorem \ref{FCA0S1}]
We first prove that the linear mapping 
$L^1(\R_+)\otimes L^1(\R_+)\to B(H)$
taking $c\otimes d$ to $\Gamma(A,c\ast d)$ for any $c,d\in L^1(\R_+)$
extends to a bounded map
\begin{equation}\label{SigmaA}
\sigma_{A}\colon 
L^1(\R_+)\otimes_{\gamma_2^*} L^1(\R_+)\longrightarrow
B(H),\qquad\hbox{with}\quad \norm{\sigma_{A}}\leq C_A^2.
\end{equation}
This result is very close to 
some estimates in \cite[Section 5]{White}, to which we refer for
related results.

We consider an arbitrary 
$\Psi = \sum_j c_j\otimes d_j$ in $L^1(\R_+)\otimes L^1(\R_+)$
for some finite families $(c_j)_j$ and $(d_j)_j$ in $L^1(\R_+)$, and
we let 
$b=\sum_j c_j\ast d_j$. We have
$$
\sigma_A(\Psi) = \Gamma(A,b),
$$
by definition, hence to prove (\ref{SigmaA})  it
suffices to check that 
\begin{equation}\label{W}
\Bignorm{\int_{0}^{\infty} b(t)T_t\,dt}\,\leq C_A^2\,\gamma_2^*(\Psi).
\end{equation}
We regard $\Psi$ as an element of $L^1(\R_+^2)$. Then
\begin{equation}\label{b}
b(t) = \int_{0}^{t} \Psi(s,t-s)\, ds,\qquad t\in\R_+.
\end{equation}
Fixing $x,y\in H$, we derive from above that
\begin{align*}
\int_{0}^{\infty} b(t)\langle T_t(x),y\rangle_H\, dt\,
& =  \int_{0}^{\infty} \int_{0}^{t}\Psi(s,t-s) 
\langle T_t(x),y\rangle_H\,  ds\, dt\\
& =  \int_{0}^{\infty} \int_{s}^{\infty}\Psi(s,t-s) 
\langle T_t(x),y\rangle_H\,  dt\, ds\\
& =  \int_{0}^{\infty} \int_{0}^{\infty}\Psi(s,u) 
\langle T_{s+u}(x),y\rangle_H\,  du\, ds\\
& = \int_{0}^{\infty} \int_{0}^{\infty} \Psi(s,u) 
\langle T_{u}(x),T_{s}^*(y)\rangle_H\,  du\, ds.
\end{align*}
The function $u\mapsto T_{u}(x)$ is continuous, hence measurable,
from $\R_+$ into $H$ and we have $\norm{T_{u}(x)}\leq C_A\norm{x}$ for 
all $u\geq 0$. Likewise, $s\mapsto T_{s}^*(y)$ is a measurable function 
from $\R_+$ into $H$ satisfying $\norm{T_{s}^*(y)}\leq C_A\norm{y}$ for 
all $s\geq 0$. Hence the function 
$$
\varphi_{x,y}\colon (s,u)\,\mapsto \langle T_{u}(x),T_{s}^*(y)\rangle_H
$$
belongs to $\mathcal{V}_2(\R_+^2)$, with $\nu_2\bigl(\varphi_{x,y}\bigr)\leq
C_A^2\norm{x}\norm{y}$. According to Lemma \ref{gamma2*}, it therefore
follows from the above computation that 
$$
\Bigl\vert
\int_{0}^{\infty} b(t)\langle T_t(x),y\rangle_H\, dt\Bigr\vert
\,\leq\,C_A^2\norm{x}\norm{y}\,\gamma_2^*(\Psi).
$$
This proves (\ref{W}).

We will use an approximation
process appealing to the operators $A+\epsilon$, with
$\epsilon>0$. Note that $A+\epsilon$ is the negative 
generator of the $C_0$-semigroup $(e^{-\epsilon t}T_t)_{t\geq 0}$.
The latter is bounded hence we can apply the above reasoning
with $A+\epsilon$ instead of $A$. Thus for all $\epsilon>0$,
we have a bounded operator
$$
\sigma_{A+\epsilon}\colon 
L^1(\R_+)\otimes_{\gamma_2^*} L^1(\R_+)\longrightarrow
B(H)
$$
satisfying $\sigma_{A+\epsilon}(c\otimes d) = \Gamma(A+\epsilon,
c\ast d)$ 
for any $c,d\in L^1(\R_+)$. Observe that $C_{A+\epsilon}\leq C_A$. 
This yields a uniform estimate
\begin{equation}\label{Uniform}
\norm{\sigma_{A+\epsilon}} \leq C_A^2,\qquad \epsilon>0.
\end{equation}
It clearly follows from Lebesgue's dominated convergence
theorem that 
for any $\Psi$ in the tensor product
$L^1(\R_+)\otimes L^1(\R_+)$,
$\sigma_{A+\epsilon}(\Psi)\to \sigma_{A}(\Psi)$ strongly,
when $\epsilon\to 0$. Then (\ref{Uniform}) ensures that
for any 
$\Psi$ in the completed tensor product 
$L^1(\R_+)\otimes_{\gamma_2^*} L^1(\R_+)$,
we have
\begin{equation}\label{Approx}
\sigma_{A+\epsilon}(\Psi)\,\overset{\epsilon \to 0}{\longrightarrow}\,
\sigma_{A}(\Psi)\quad\hbox{strongly}.
\end{equation}

The mapping $\sigma_A$ is closely related to the quotient
map $\Theta$ given by Corollary \ref{QuotMap}. Indeed
for $\Psi \in L^1(\R_+)\otimes L^1(\R_+)$ and 
$b\in L^1(\R_+)$ given by (\ref{b}), we have 
$\Theta(\Psi) = \widehat{b}(-\,\cdotp)$ hence
$\widetilde{\Theta(\Psi)}= L_b$. 
Applying (\ref{Compatible}), this yields
$$
\widetilde{\Theta(\Psi)}(A+\epsilon) = 
\sigma_{A+\epsilon}(\Psi),\qquad \Psi\in 
L^1(\R_+)\otimes L^1(\R_+),\ \epsilon>0.
$$

We now claim that 
\begin{equation}\label{KK}
{\rm Ker}(\Theta)\subset {\rm Ker}(\sigma_A).
\end{equation}
To prove this, let 
$\Psi\in {\rm Ker}(\Theta)$ and let $(\Psi_n)_{n\geq 0}$
be a sequence in
$L^1(\R_+)\otimes L^1(\R_+)$
such that $\gamma_2^*\bigl(\Psi -\Psi_n\bigr)\to 0$
when $n\to\infty$. 
Then $\Theta(\Psi_n)\to \Theta(\Psi)=0$ in $\A_{0,S^1}(\R)$,
hence $\widetilde{\Theta(\Psi_n)}\to 0$
uniformly on $\C_+$, when $n\to\infty$. Further for all $\epsilon>0$, 
$\sigma_{A+\epsilon}(\Psi_n)\to \sigma_{A+\epsilon}(\Psi)$
in $B(H)$, when $n\to\infty$. It therefore follows from
\cite[Theorem 3.1]{BHM} (see also \cite[Lemma 4.1]{AL})
that $\sigma_{A+\epsilon}(\Psi)=0$, for all $\epsilon>0$.
Applying (\ref{Approx}), we deduce that 
$\sigma_{A}(\Psi)=0$, which proves (\ref{KK}).

We can now conclude the proof, as follows. 
Since $\Theta$ is a quotient map, by Corollary \ref{QuotMap}, it 
follows from the inclusion (\ref{KK}) that there 
exists a necessary unique bounded map
$\rho\colon \A_{0,S^1}(\R)\to B(H)$ such that
$$
\sigma_A = \rho\circ \Theta.
$$
Moreover, $\norm{\rho}=\norm{\sigma_A}\leq C_A^2$.
Let $\widetilde{\rho}\colon \A_{0,S^1}(\C_+)\to B(H)$
be derived from $\rho$ by writing
$\widetilde{\rho}(\widetilde{F})=\rho(F)$
for any $F\in \A_{0,S^1}(\R)$. 

Let $\Psi \in L^1(\R_+)\otimes L^1(\R_+)$ and 
let $b\in L^1(\R_+)$ be given by (\ref{b}). Then 
$$
\sigma_A(\Psi) =\rho\bigl(\Theta(\Psi)\bigr)
=\rho\bigl(\widehat{b}(-\,\cdotp)\bigr) =
\widetilde{\rho}(L_b),
$$
hence 
\begin{equation}\label{Ident}
\widetilde{\rho}(L_b)
= \int_0^\infty b(t)T_t\, dt.
\end{equation}
Any $b\in L^1(\R_+)$ can be approximated in $L^1$-norm
by functions of the form $b\ast c$, with $c\in L^1(\R_+)$.
Hence (\ref{Ident})
actually holds true for any $b\in L^1(\R_+)$.
Thus $\rho_{0,S^1}^{A} :=\widetilde{\rho}$ satisfies 
property (\ref{FCA0S1-2}) and the norm 
estimate in Theorem \ref{FCA0S1}. The fact that 
$\rho_{0,S^1}^{A}$ is a homomorphism and the uniqueness property 
are straightforward consequences of Lemma \ref{Inclusions}.
\end{proof}

\begin{rq1}\label{CB-Reg}
We observe that 
\begin{equation}\label{Differ}
{\mathcal M}_{S^1}(H^1(\R))\,\not=\,{\mathcal R}.
\end{equation}
Indeed let $\iota\colon \A_{0,S^1(\R)}\to C_0(\R)$ be the inclusion map
(this is a contraction, by (\ref{Contraction})). Let
$$
\iota^*\colon M(\R)\longrightarrow {\mathcal M}_{S^1}(H^1(\R))\,\subset B(H^1(\R;S^1))
$$
be its adjoint map,
with respect to the duality  pairings  (\ref{Riesz}) and  (\ref{BH1-dual}).
Then it follows from the identity (\ref{DualDual}) that 
$$
\iota^*(\mu) = R_\mu\overline{\otimes} I_{S^1},\qquad \mu\in M(\R).
$$
Assume that ${\mathcal R}$ and ${\mathcal M}_{S^1}(H^1(\R))$ are equal.
Then $\iota^*$
is an isomorphism, hence
$\iota$ itself is an isomorphism. Thus there exists a constant $\delta>0$
such that $\delta \norm{\,\cdotp}_{\A_{0,S^1}}\leq \norm{\,\cdotp}_\infty$
on $\A_{0,S^1}(\R)$. According to Theorem \ref{FCA0S1} this implies that
for any bounded $C_0$-semigroup $(T_t)_{t\geq 0}$ on Hilbert space, there exists
a constant $C>0$ such that
$$
\Bignorm{\int_0^\infty b(t)T_t\,dt}\,\leq\, C\norm{\widehat{b}}_\infty,\qquad
b\in L^1(\R_+).
$$
However the existence of bounded $C_0$-semigroups on Hilbert space
not satisfying this property is well-known. We refer to \cite[Subsection 4.4]{AL}
where this property is discussed at length. This yields a contradiction and shows
(\ref{Differ}).

We do not know any concrete example of an $S^1$-bounded Fourier
multiplier on $H^1(\R)$ not belonging to ${\mathcal R}$. Also we
do not know if ${\mathcal R}$ is dense in ${\mathcal M}_{S^1}(H^1(\R))$
for the  ${\mathcal M}_{S^1}(H^1(\R))$-norm.
\end{rq1}

We now mention an extension of Theorem \ref{FCA0S1} in the spirit of 
\cite[Subsection 4.3]{AL}. Let $BUC(\R; S^\infty)$ be the space of 
all bounded and uniformly continuous functions from $\R$
into $S^\infty$. Equipped with the supremum norm $\norm{\,\cdotp}_\infty$,
this is a Banach space. Furthermore, $C_0(\R; S^\infty)$
is a subspace of $BUC(\R; S^\infty)$. We let $\A_{S^1}(\R)$ be the space of all
functions $F \colon \R \rightarrow \C$ such that there 
exist two sequences $(f_k)_{k\geq 1}$ in 
$BUC(\R;S^\infty)$  
and $(h_k)_{k\geq 1}$  in $H^1(\R;S^1)$ satisfying (\ref{DefA0}). 
We set 
$$
\norme{F}_{\A_{S^1}} = \inf \Bigl\{\sum_{k=1}^{\infty} 
\normeinf{f_k}\norme{h_k}_1 \Bigr\},
$$
where the infimum runs over all sequences  $(f_k)_{k\geq 1}$ in 
$BUC(\R;S^\infty)$  
and $(h_k)_{k\geq 1}$  in $H^1(\R;S^1)$ satisfying (\ref{DefA0}). 
Then  $\A_{S^1}(\R)$ is a Banach space and by construction,
\begin{equation}\label{Embed}
\A_{0,S^1}(\R)\subset \A_{S^1}(\R).
\end{equation} 
The argument in \cite[Proposition 3.12]{AL} shows that
the embedding (\ref{Embed}) is actually an isometric one.

The proof of Proposition
\ref{BanAlg} shows as well that $\A_{S^1}(\R)$ is a Banach algebra.
Moreover for any $f_1\in BUC(\R; S^\infty)$ and $f_2\in C_0(\R; S^\infty)$, 
the function $\varphi_s\colon\R\to S_\infty\otimes_{\rm min} S^\infty$ 
defined by $\varphi_s(t)=f_1(t)\otimes f_2(t-s)$ belongs to 
$C_0(\R; S_\infty\otimes_{\rm min} S^\infty)$, for all $s\in\R$. Hence the proof of Proposition
\ref{BanAlg} actually shows that $\A_{0,S^1}(\R)$
is an ideal of $\A_{S^1}(\R)$ and that for all $F\in \A_{S^1}(\R)$ and 
$G\in\A_{0,S^1}(\R)$, we have
\begin{equation}\label{FG}
\norm{FG}_{\A_{0,S^1}}\leq 
\norm{F}_{\A_{S^1}}\norm{G}_{\A_{0,S^1}}.
\end{equation}

Turning to holomorphic functions, we set
$$
\A_{S^1}(\C_+) = \bigl\{\widetilde{F}\, :\, F\in\A_{S^1}(\R)\bigr\}
$$
that we equip with the norm given by 
$\norm{\widetilde{F}}_{\A_{0,S^1}(\C_+)}
=
\norm{F}_{\A_{0,S^1}(\R)}$. Thus 
$\A_{S^1}(\C_+)$ is 
a Banach algebra for pointwise multiplication, containing 
$\A_{0, S^1}(\C_+)$ as a closed ideal.

\begin{cor}\label{Extension}
There exists a unique bounded homomorphism
$\rho_{S^1}^{A}\colon \A_{S^1}(\C_+)\to B(H)$ 
such that
$$
\rho_{S^1}^{A}(L_b)=\int_0^\infty b(t)T_t\,dt,\qquad b\in L^1(\R_+).
$$
Moreover $\norm{\rho_{S^1}^{A}}\leq C_A^2$.
\end{cor}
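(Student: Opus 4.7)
The plan is to extend $\rho_{0,S^1}^{A}$ to all of $\A_{S^1}(\C_+)$ by exploiting two structural features: $\A_{0,S^1}$ sits inside $\A_{S^1}$ as a closed two-sided ideal, and the Yosida resolvents form a norm-bounded approximate unit in this ideal whose image under $\rho_{0,S^1}^{A}$ converges strongly to $I_H$. First I would check the ideal property: for $F\in\A_{0,S^1}(\R)$ and $G\in\A_{S^1}(\R)$ one has $FG\in\A_{0,S^1}(\R)$ with $\norm{FG}_{\A_{0,S^1}}\leq\norm{F}_{\A_{0,S^1}}\norm{G}_{\A_{S^1}}$. This is obtained by rerunning the argument of Proposition~\ref{BanAlg}: when $f_1\in C_0(\R;S^\infty)$ and $f_2\in BUC(\R;S^\infty)$, the auxiliary function $\varphi_s(t)=f_1(t)\otimes f_2(t-s)$ still lies in $C_0(\R;S^\infty\otimes_{\mathrm{min}} S^\infty)$ because $f_1$ decays at infinity while $f_2$ is merely bounded, so the product $(f_1\ast h_1)(f_2\ast h_2)$ belongs to $\A_{0,S^1}(\R)$ with the claimed norm estimate.

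Next I would introduce $\Gamma_\lambda(z):=\lambda/(\lambda+z)=L_{e_\lambda}(z)$ for $\lambda>0$, where $e_\lambda(t)=\lambda e^{-\lambda t}\chi_{(0,\infty)}(t)$. Since $\norm{e_\lambda}_{L^1}=1$, Lemma~\ref{Action} together with the inequality $\norm{m}_\infty\leq\norm{T_m\overline{\otimes}I_{S^1}}$ yields $\norm{\Gamma_\lambda}_{\A_{0,S^1}(\C_+)}\leq 1$ uniformly in $\lambda$, while $\rho_{0,S^1}^{A}(\Gamma_\lambda)=\lambda(A+\lambda)^{-1}\to I_H$ strongly as $\lambda\to\infty$. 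For $F\in\A_{S^1}(\C_+)$ the ideal property gives $F\Gamma_\lambda\in\A_{0,S^1}(\C_+)$ with $\norm{F\Gamma_\lambda}_{\A_{0,S^1}}\leq\norm{F}_{\A_{S^1}}$, and setting $S_\lambda(F):=\rho_{0,S^1}^{A}(F\Gamma_\lambda)\in B(H)$ yields a uniformly bounded net with $\norm{S_\lambda(F)}\leq C_A^2\norm{F}_{\A_{S^1}}$. The central identity is the ``resolvent-swap''
$$
S_\lambda(F)\cdot\mu(A+\mu)^{-1}\,=\,\rho_{0,S^1}^{A}\bigl(F\Gamma_\lambda\Gamma_\mu\bigr)\,=\,S_\mu(F)\cdot\lambda(A+\lambda)^{-1},\qquad \lambda,\mu>0,
$$
which comes from the multiplicativity of $\rho_{0,S^1}^{A}$ on the ideal and from $F\Gamma_\mu\in\A_{0,S^1}$. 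Letting $\lambda\to\infty$ along any WOT-convergent subnet of $(S_\lambda(F))$ with limit $T$, the left-hand side converges in WOT to $T\cdot\mu(A+\mu)^{-1}$ while the right-hand side converges strongly to $S_\mu(F)$. Hence $T\cdot\mu(A+\mu)^{-1}=S_\mu(F)$, which determines $T$ uniquely on $\mathrm{Ran}(\mu(A+\mu)^{-1})=\mathrm{Dom}(A)$, and therefore on all of $H$ by boundedness. So $(S_\lambda(F))$ has a unique WOT-cluster point, the whole net converges to it in WOT, and I define $\rho_{S^1}^{A}(F)$ to be this WOT-limit.

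The remaining verifications are routine. Linearity is clear, and $\norm{\rho_{S^1}^{A}(F)}\leq C_A^2\norm{F}_{\A_{S^1}}$ is automatic from the uniform bound. Multiplicativity is obtained by first fixing $\mu$ and letting $\lambda\to\infty$ in $S_\lambda(F_1)S_\mu(F_2)=\rho_{0,S^1}^{A}(F_1F_2\Gamma_\lambda\Gamma_\mu)=S_\mu(F_1F_2)\lambda(A+\lambda)^{-1}$ to get $\rho_{S^1}^{A}(F_1)S_\mu(F_2)=S_\mu(F_1F_2)$, then letting $\mu\to\infty$; composition with the bounded operator $\rho_{S^1}^{A}(F_1)$ preserves WOT-convergence. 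For Laplace transforms $F=L_b$ one has $F\Gamma_\lambda=L_{b\ast e_\lambda}$ and $b\ast e_\lambda\to b$ in $L^1(\R_+)$, so $S_\lambda(L_b)\to\int_0^\infty b(t)T_t\,dt$ even in operator norm. Uniqueness of the extension follows from the same resolvent-swap identity applied to any bounded homomorphic extension $\rho$, which must satisfy $\rho(F)\cdot\mu(A+\mu)^{-1}=S_\mu(F)$ and therefore coincide with $\rho_{S^1}^{A}$. The main obstacle is isolating the resolvent-swap identity and using the density of $\mathrm{Dom}(A)$ in $H$ to conclude that the WOT-cluster point is unique; once this is in place, everything else is standard bookkeeping with the WOT, the SOT, and the Yosida approximation.
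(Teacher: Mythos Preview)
Your argument is correct. The ideal property of $\A_{0,S^1}$ inside $\A_{S^1}$, the uniform bound $\norm{\Gamma_\lambda}_{\A_{0,S^1}}\leq 1$ obtained via duality and Lemma~\ref{Action}, the resolvent-swap identity, and the passage to a unique WOT-limit through the density of $\mathrm{Dom}(A)$ all check out; the verification of multiplicativity and of uniqueness is handled cleanly.

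As for comparison with the paper: the paper gives no details here, simply pointing to the passage from \cite[Theorem~4.4]{AL} to \cite[Corollary~4.6]{AL} and asserting that the same mechanism works. That mechanism is precisely the one you wrote down---multiplication by the Yosida approximants $\Gamma_\lambda=L_{e_\lambda}$ pushes $\A_{S^1}$ into the ideal $\A_{0,S^1}$, where $\rho_{0,S^1}^{A}$ is already defined, and strong convergence of $\lambda(A+\lambda)^{-1}$ to $I_H$ lets one pass to the limit. So your proposal is not merely a valid alternative but is, in substance, the omitted argument the paper is invoking.
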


\begin{proof}
We shall deduce this result from Theorem \ref{FCA0S1} with an argument similar to 
the one used to pass from \cite[Theorem 4.4]{AL} to  \cite[Corollary 4.6]{AL}. For any
integer $N\geq 1$, we let $G_N\colon\C_+\to\C$ be defined by
$G_N(z)=N(N+z)^{-1}$.  We noticed in the proof of \cite[Proposition 3.12]{AL} that
$(G_N)_{N\geq 1}$ is a contractive approximate unit of $\A_0(\C_+)$.
Since $\A_0(\C_+)$ is dense in $\A_{0,S^1}(\C_+)$, by Lemma \ref{Inclusions}, we deduce that
$(G_N)_{N\geq 1}$ is a contractive approximate unit of $\A_{0,S^1}(\C_+)$.

For any $\varphi\in \A_{S^1}(\C_+)$, the product $\varphi G_1$ belongs to $\A_{0,S^1}(\C_+)$
by the ideal property of $\A_{0, S^1}(\C_+)$. Hence
we may define an a priori unbounded operator 
$S_\varphi :=(1+A)\rho_{0,S^1}^{A}(\varphi G_1)$, with domain ${\rm Dom}(S_\varphi)
=\{x\in H\, :\, [\rho_{0,S^1}^{A}(\varphi G_1)](x)\in {\rm Dom}(A)\}$. 
Using the fact that $(G_N)_{N\geq 1}$ is a contractive approximate unit of $\A_{0,S^1}(\C_+)$
and the inequality (\ref{FG}), 
the argument in the proof of \cite[Corollary 4.6]{AL} shows that
${\rm Dom}(S_\varphi)=H$ and $S_\varphi\in B(H)$, with 
$$
\norm{S_\varphi}\leq C_A^2\norm{\varphi}_{\A_{S^1}}.
$$
Next one defines $\rho_{S^1}^{A}\colon \A_{S^1}(\C_+)\to B(H)$ by $\rho_{S^1}^{A}(\varphi)=S_\varphi$.
Arguing again as in the proof of \cite[Corollary 4.6]{AL}, we see that $\rho_{S^1}^{A}$ extends 
$\rho_{0,S^1}^{A}$ and that $\rho_{S^1}^{A}$ is multiplicative. This proves the
existence statement. 

To prove uniqueness, consider $\rho_{S^1}^{A}$ satisfying the properties of Corollary \ref{Extension}.
By Lemma \ref{Inclusions-Tilde}, $\rho_{S^1}^{A}$ extends $\rho_{0,S^1}^{A}$. Hence
for any $\varphi\in \A_{S^1}(\C_+)$,  we have
$$
\rho_{0,S^1}^{A}(\varphi G_1)= 
\rho_{S^1}^{A}(\varphi G_1)=\rho_{0,S^1}^{A} (G_1)\rho_{S^1}^{A}(\varphi)
= (1+A)^{-1}\rho_{S^1}^{A}(\varphi).
$$ 
This implies that 
$\rho_{S^1}^{A}(\varphi)=(1+A)\rho_{0,S^1}^{A}(\varphi G_1)=S_\varphi$.
\end{proof}

We finally note that the proof of Theorem \ref{FCA0S1} provides a new  proof of 
\cite[Theorem 4.4]{AL}, different from the original one. On the
other hand, it is possible to write a proof of Theorem \ref{FCA0S1} 
following the approach of \cite{AL}. However this is a lengthy proof
and there is no point in writing it here.

\vskip 1cm
\noindent
{\bf Acknowledgements.} LA was supported by the ERC
grant {\it Rigidity of groups and higher index theory} under the European Union’s
Horizon 2020 research and innovation program (grant agreement no. 677120-INDEX).
CL was supported by the ANR project {\it Noncommutative analysis on groups and quantum groups}
(No./ANR-19-CE40-0002). The  authors
gratefully thank the Heilbronn Institute for Mathematical Research (HIMR) 
and the UKRI/EPSRC Additional Funding Programme for Mathematical Sciences 
for the financial support through a Heilbronn Small Grant award.

\vskip 1cm

\
\vskip 0.8cm

\end{document}